\title[On homotopy groups of spaces of embeddings of an arc or a circle]{On homotopy groups of spaces of embeddings of an arc or a circle: the Dax invariant}
\author{Danica Kosanovi\'c}
\address{ETH Z\"urich, Department of Mathematics, R\"amistrasse 101, 8092 Z\"urich, Switzerland}
\email{danica.kosanovic@math.ethz.ch}
\date{July 16, 2022}
\begin{document}

\begin{abstract}
    We compute in many classes of examples the first potentially interesting homotopy group of the space of embeddings of either an arc or a circle into a manifold $M$ of dimension $d\geq4$.
    In particular, if $M$ is a simply connected 4-manifold the fundamental group of both of these embedding spaces is isomorphic to the second homology group of $M$, answering a question posed by Arone and Szymik. 
    The case $d=3$ gives isotopy invariants of knots in a 3-manifold, that are universal of Vassiliev type $\leq1$, and reduce to Schneiderman's concordance invariant.
\end{abstract}

\maketitle

\section{Introduction}\label{sec:intro}

Classical knot theory studies path components of the space of embeddings of a circle into a 3-manifold, and 2-knot theory is concerned with embeddings of a surface into a 4-manifold. One can also study codimension two knotting phenomena in higher dimensions, and in fact even more generally -- there exist smooth embeddings of codimension higher than two which are not mutually isotopic, as shown by Haefliger~\cite{Haefliger}.

In this and the forthcoming paper \cite{K-knotted} we consider a generalisation of knot theory in another direction: instead of studying only the set of components of a space of smooth embeddings, we study its \emph{homotopy groups}, when the source manifold is a single arc or a single circle, and the target is a compact smooth manifold of dimension $d\geq3$ (with or without boundary, and the arc is embedded neatly). For $d\geq4$ these groups give another notion of \emph{knottedness}, as their nontrivial elements are represented by multi-parameter families of embeddings which cannot be trivialised through such families. Interestingly, they also show useful for problems in low-dimensional topology, for example, in the recent work of Budney and Gabai \cite{Budney-Gabai} and ours with Teichner~\cite{KT-LBT}.

The present work is concerned with the lowest homotopy group potentially distinguishing embeddings from immersions, namely, ``knotted'' classes in degree $d-3$ for the ambient manifold of dimension $d$. We state our results for the respective cases of arcs and circles in Sections~\ref{intro-sec:arcs} and~\ref{intro-sec:circles}. In Section~\ref{sec:applications} we discuss applications, examples and the case $d=3$.

\subsection{Arcs}\label{intro-sec:arcs}
For $\D^1=[-1,1]$ and an oriented smooth $d$-dimensional manifold $X$ consider the spaces
\begin{equation}\label{eq:def-arcs}
\begin{aligned}
    \Arcs{X} &=\{K\colon\D^1\hra X \mid K(-1)=x_-,\,K(1)=x_+\}, \\ 
    \Imm_\partial(\D^1,X) &=\{K\colon\D^1\imra X \mid K(-1)=x_-,\,K(1)=x_+\},
\end{aligned}
\end{equation}
of embedded and immersed arcs respectively, which are neat and fixed on the boundary (i.e.\ $K$ is transverse to $\partial X$ and $K\cap\partial X=\{x_-,x_+\}$). Let us fix an arbitrary basepoint $\u\in\Arcs{X}$.

For immersions we have a map
\begin{equation}\label{eq:pu}
    p_\u\colon\ImArcs{X}\to\Omega X,
\end{equation}
which by Smale~\cite{Smale} induces isomorphisms on homotopy groups in degrees $n\leq d-3$:
\[
    \pi_np_\u\colon\pi_n(\ImArcs{X},\u)\cong\pi_n(\Omega X,\const_{x_-})\cong\pi_{n+1}(X,x_-).
\]
For $F\colon\S^n\to\ImArcs{X}$ the map $p_\u\circ F\colon\S^n\to\Omega X$ sends $\vec{t}$ to the loop $F(\vec{t})\cdot\u^{-1}$ based at $x_-$, where the inverse denotes the reverse of a path and the dot stands for the concatenation of paths. 

Following \cite{Dax} and \cite{Gabai-disks} we described in \cite{KT-LBT} a range of homotopy groups of $\Arcs{X}$.
\begin{theorem}[\cite{KT-LBT}]\label{thm:KT-arcs}
    Assume $d\geq 4$. Then there is a bijection $\pi_0\Arcs{X}\cong\pi_1X$, and for any basepoint $\u\in\Arcs{X}$ there are isomorphisms
    \[
        \pi_n(\Arcs{X},\u){\color{black}\xrightarrow[\cong]{\pi_{d-3}(\incl_X,\u)}}
        \pi_n(\ImArcs{X},\u)
        {\color{black}\xrightarrow[\cong]{\pi_{d-3}p_\u}}\pi_{n+1}X
    \]
    where $1\leq n\leq d-4$, and a group extension
    \begin{equation}\label{eq:thm1}
    \begin{tikzcd}[column sep=large]
        \faktor{\Z[\pi_1X\sm1]}{\dax_\u(\pi_{d-1}X)}\ar[tail,shift left]{r}{\partial\realmap} & \pi_{d-3}\big(\Arcs{X},\u\big)\arrow[shift left,dashed]{l}{\Dax} \ar[two heads]{r}{\pi_{d-3}p_\u} & \pi_{d-2}X.
    \end{tikzcd}
    \end{equation}
    For $d=4$ this extension is central, with an explicit commutator pairing.
    % is given by
    % $[k_1,k_2]=\partial\realmap(\lambdabar_X(p_\u k_1,p_\u k_2)), \quad\text{for}\; k_1,k_2\in\pi_{d-3}(\Arcs{X},\u).$
\end{theorem}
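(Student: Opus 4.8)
The plan is to compare $\Arcs{X}$ with the immersion space $\ImArcs{X}$, which is understood by Smale--Hirsch, and then to measure their difference by a general--position and double--point--counting argument in the spirit of Dax~\cite{Dax} and Gabai~\cite{Gabai-disks}; the same information is organised by the first layers of the Goodwillie--Weiss embedding tower, which converges here because the codimension $d-1$ is at least $3$. By Smale, recalled above, $p_\u$ is a $\pi_n$--isomorphism on $\ImArcs{X}$ for $n\leq d-3$, and by Smale--Hirsch $\ImArcs{X}$ is a space of paths in the unit sphere bundle $S(TX)\to X$; the long exact sequence of this bundle gives $\pi_0\ImArcs{X}\cong\pi_1X$ and exhibits $\pi_{d-2}\ImArcs{X}$ as an extension of $\pi_{d-1}X$ by a quotient of $\Z\cong\pi_{d-1}\S^{d-1}$. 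So it remains to study the inclusion $\iota\colon\Arcs{X}\to\ImArcs{X}$, and here general position already does a lot: a generic $k$--parameter family of neat arcs $\D^1\to X$ has its double--point locus of dimension $k+2-d$, so is a family of \emph{embeddings} whenever $k\leq d-3$; carrying this out relative to a subcomplex shows that $\iota$ is $\pi_n$--surjective for all $n\leq d-3$ and $\pi_n$--injective for $n\leq d-4$ (a generic nullhomotopy is then a $(n+1)$--parameter family with $n+1\leq d-3$, hence a homotopy of embeddings). This yields the bijection $\pi_0\Arcs{X}\cong\pi_1X$, the stated isomorphisms for $1\leq n\leq d-4$, and surjectivity of $\pi_{d-3}p_\u$ onto $\pi_{d-3}\ImArcs{X}\cong\pi_{d-2}X$ with kernel $\ker(\pi_{d-3}\iota)$.

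The heart of the matter is to identify this kernel by the Dax invariant. Given $F\colon\S^{d-3}\to\Arcs{X}$ with $\iota F$ nullhomotopic, choose a generic nullhomotopy $G\colon\D^{d-2}\to\ImArcs{X}$ rel $\partial$; its interior then carries finitely many transverse double points, i.e.\ triples $(\vec t,s_1<s_2)$ with $G(\vec t)(s_1)=G(\vec t)(s_2)$, to each of which one attaches a sign $\varepsilon\in\{\pm1\}$ (read off from the ambient, source and parameter orientations) and an element $g\in\pi_1X$ (the loop $G(\vec t)|_{[s_1,s_2]}$ conjugated to $x_-$ along $G(\vec t)|_{[-1,s_1]}$ and compared against $\u$); put $\delta(G)=\sum_i\varepsilon_ig_i$. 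Two things have to be established: (a) $\delta(G)$ depends only on $[F]$ once one passes to the quotient $\Z[\pi_1X\sm1]/\dax_\u(\pi_{d-1}X)$, and (b) the resulting map $\Dax$ and the realisation $\partial\realmap$ are mutually inverse on the kernel. For (a): a homotopy of nullhomotopies sweeps out a $1$--manifold of double points along which $\varepsilon$ and $g$ are locally constant, so $\delta(G)$ is a cobordism invariant; two nullhomotopies of $\iota F$ differ by a class in $\pi_{d-2}\ImArcs{X}$, and the corresponding count there depends only on the underlying family of maps $\D^1\to X$, hence factors through $\pi_{d-2}$ of the space of such maps, namely $\pi_{d-1}X$, which defines $\dax_\u\colon\pi_{d-1}X\to\Z[\pi_1X\sm1]$ and the indeterminacy (the map $\pi_{d-2}\ImArcs{X}\to\pi_{d-1}X$ being onto by the sphere bundle); and one works modulo the coefficient of $1\in\pi_1X$, since a double point with trivial group element bounds a Whitney disc and can be cancelled. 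This last cancellation is the real obstacle: for $d\geq5$ it is the standard Whitney trick, but for $d=4$ the Whitney disc is only immersed and one has to push its self--intersections off the free end $x_+$ of the arc (the Norman trick, cf.~\cite{Gabai-disks}) -- it is precisely this borderline phenomenon that makes $d=4$ delicate. For (b): for $g\neq1$, an $\S^{d-3}$--parametrised finger/Whitney move on $\u$ along a loop representing $g$, creating and then cancelling a double--point pair, produces a class $\partial\realmap(g)$ with $\delta=g$, so $\Dax\circ\partial\realmap=\mathrm{id}$; conversely, if $\Dax[F]=0$ then, after altering $G$ by a suitable element of $\pi_{d-1}X$, all its double points can be made trivial and cancelled, so $G$ pushes into $\Arcs{X}$ and $[F]=0$. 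Since $\Dax$ and $\partial\realmap$ are additive on the kernel, these statements give the group extension~\eqref{eq:thm1}.

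Finally, suppose $d=4$, so that $\pi_{d-3}=\pi_1$ and the kernel $A=\Z[\pi_1X\sm1]/\dax_\u(\pi_3X)$ is abelian; the conjugation action of $\pi_1\Arcs{X}$ on $A$ therefore descends to an action of the quotient $\pi_2X$, which one checks from the geometric description to be trivial -- intuitively, conjugating the double--point picture by a spherical family of parameters cannot alter the recorded $\pi_1X$--labels -- so the extension is central. Its commutator pairing $\pi_2X\otimes\pi_2X\to A$ is then computed by lifting two classes to loops of embeddings that drag the arc around immersed $2$--spheres $A_\alpha,A_\beta\subset X$ and counting the double points forced when one sphere is pushed across the other; the outcome is the reduction to $A$ of the equivariant intersection form of $X$ (its anti--Hermitian part). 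In particular it vanishes when $X$ is simply connected, where $A=0$ and $\pi_1\Arcs{X}\cong\pi_2X$, as in the abstract.
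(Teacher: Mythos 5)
Your proposal follows essentially the same strategy the paper recalls from \cite{KT-LBT}: pass to the long exact sequence of the pair $(\ImArcs{X},\Arcs{X})$, compute the relative group via a signed $\pi_1X$-labelled count of double points in a generic nullhomotopy (the Dax invariant), realize elements of $\Z[\pi_1X]$ by family finger moves, and identify the indeterminacy coming from the choice of nullhomotopy with $\dax_\u(\pi_{d-1}X)$. The use of Smale--Hirsch and general position to handle $\pi_n$ for $n\leq d-4$ and the surjectivity of $\pi_{d-3}p_\u$ is also the same.

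One step is stated too strongly, and a careful reader would balk at it. You claim that for a class of nullhomotopies in $\pi_{d-2}\ImArcs{X}$ the Dax count ``depends only on the underlying family of maps $\D^1\to X$, hence factors through $\pi_{d-1}X$.'' That is false as written: the fibration $\ImArcs{X}\to\Map_\partial(\D^1,X)$ (equivalently the short exact sequence $\Z\hookrightarrow\pi_{d-2}\ImArcs{X}\twoheadrightarrow\pi_{d-1}X$ appearing as \eqref{eq:pu2}) has kernel generated by the interior twist, and $\Dax\circ\delta_{\incl_X}$ does \emph{not} vanish there; it equals $1\in\Z[\pi_1X]$. So the count does depend on the regular homotopy class, not just the homotopy class of the underlying maps. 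The mechanism that rescues the statement is precisely this computation: since the kernel hits exactly $\Z\langle1\rangle$, discarding the coefficient of $1\in\pi_1X$ makes $\Dax\circ\delta_{\incl_X}$ descend to a well-defined homomorphism $\dax_\u\colon\pi_{d-1}X\to\Z[\pi_1X\sm1]$. Your separate remark that trivial double points can be cancelled by (Norman-style) Whitney discs is the heuristic behind why the coefficient of $1$ should be irrelevant, and is part of what is needed to show $\partial\realmap\circ\Dax=\Id$, but it does not by itself establish the claimed factorization; you would want to replace the ``depends only on the underlying maps'' sentence with the interior-twist calculation. With that correction, and accepting Dax's cancellation results as a black box for the injectivity of $\partial\realmap\circ\Dax$ (which the paper also does), your sketch matches the paper's argument.
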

Let us describe the maps appearing in the theorem, referring to Section~\ref{sec:arcs} for details. Firstly, $p_\u$ in \eqref{eq:thm1} is obtained from \eqref{eq:pu} by precomposing with the inclusion $\incl_X\colon\Arcs{X}\hra\Imm_\partial(\D^1,X)$. The homomorphism $\partial\realmap$ is is an explicit {\color{black}\emph{geometric realization}: for $g\in\pi_1X\sm1$ the family $\partial\realmap(g)$ has a piece of $u$ dragged around an embedded loop representing $g$, so that it comes back close to a meridian sphere to $\u$ and then swings around it; see Figure~\ref{fig:realmap} and \eqref{eq:realmap-def}. The \emph{Dax invariant} $\Dax$ is the inverse of $\partial\realmap$} on the subgroup $\ker(\pi_{d-3}p_\u)$, and is defined by {\color{black}picking a path through immersed arcs from the given $(d-3)$-parameter family of embedded arcs to the constant one $\const_\u$, and then counting double points that occur, together with associated loops; see \eqref{eq:Dax-def} for the precise formula}. Finally, computing $\Dax$ {\color{black}on} self-homotopies of the trivial family gives the homomorphism
\begin{equation}\label{eq:dax-u-def}
    \dax_\u\colon\pi_{d-1}X\ra\Z[\pi_1X\sm1].
\end{equation}
{\color{black}Namely, given $a\in\pi_{d-1}X$ we pick $F_A\colon\S^{d-2}\to\Imm_\partial(\D^1,X)$ representing it, that is $p_\u(F_A)=a$, and view $F_A$ as a self-homotopy of $\const_\u$, for which we can compute $\dax_\u(a)\coloneqq\Dax(F_A)$; see \eqref{eq:dax-def}.}
See Remark~\ref{rem:GW} for a relation of $\Dax$ and $\dax_\u$ to the Goodwillie--Weiss embedding calculus.
% Finally, $\lambda_X\colon\pi_2X\times\pi_2X\to\Z[\pi_1X]$ is the equivariant intersection form of a 4-manifold $X$, and $\lambdabar_X$ is equal to $\lambda_X$ minus its term at $1\in\pi_1X$. 
\begin{figure}[!htbp]
    \centering
    \includestandalone[mode=buildmissing,width=0.97\linewidth]{figures/fig-realmap}
    \caption{{\color{black}
    \emph{Left.} The family $\partial\realmap(g)(\vec{t})\in\Arcs{X}$ for several values $\vec{t}\in\S^{d-3}$ and $d=4$ (coloured arcs are in past or future). \emph{Right.} The single immersed arc $\rho$ in a homotopy from $\partial\realmap(g)$ to $\const_\u$ has one double point $x=\rho_{\theta_x^-}=\rho_{\theta_x^+}$, with sign $+1$ and loop $\rho_{\leq\theta_x^-}\cdot \rho_{\leq\theta_x^+}^{-1}\simeq\rho_{\leq\theta_x^-}\cdot \u_{\leq\theta_x^+}^{-1}\simeq g$, so $\Dax(\partial\realmap(g))=g$.}}
    \label{fig:realmap}
\end{figure}

Note that if $\pi_1X=1$ we have the isomorphism
$\pi_{d-3}p_\u\colon\pi_{d-3}(\Arcs{X},\u)\xrightarrow{\cong}\pi_{d-2}X$,
so the first potentially interesting group (i.e.\ one which would distinguish embeddings from immersions) turns out \emph{not} to be so interesting. On the other hand, for $\pi_1X\neq1$ it remains to understand the image of \eqref{eq:dax-u-def} and also describe the extension \eqref{eq:thm1}. For the latter see Questions~\ref{question1} and~\ref{question2} below, while the former is studied in Theorem~\ref{thm:dax-compute}, which gives simplifying formulae for $\dax_\u$. We use them to compute $\ker(\pi_{d-3}p_\u)$ for several classes of target manifold $X$ in Sections~\ref{intro-sec:examples} and~\ref{intro-sec:more-examples}.

In Theorem~\ref{thm:dax-compute} we consider $\dax_\u$ actually for any $d\geq3$, and compare it to the homomorphism
\[
    \dax\coloneqq\dax_{\u_-}
\]
for a fixed arc $\u_-\colon\D^1\hra X$ with endpoints $\u_-(-1)=x_-$ and $\u_-(1)=x_-'$, a point close to $x_-$ (so $\u$ and $\u_-$ live in different spaces $\Arcs{X}$), and so that \emph{$\u_-$ is isotopic into $\partial X$ rel.\ endpoints}.

{\color{black}We need to fix some more notation.} Let $ga\in\pi_nX$ denote the usual action of $g\in\pi_1X\coloneqq\pi_1(X,x_-)$ on $a\in\pi_nX\coloneqq\pi_n(X,x_-)$ for $n\geq1$, and $g \bm{k}\in\pi_1(X,\partial X)$ the action of $g\in\pi_1X$ on the set of arcs $\bm{k}\in\pi_1(X,\partial X)\coloneqq\pi_1(X,\partial X,x_-)$, by precomposition at $x_-$. We will express $\dax_\u$ in terms of $\dax$ and the well-known algebraic invariant, the \emph{equivariant intersection pairing}:
\[
    \lambda\colon\pi_{d-1}X\times\pi_1(X,\partial X)\to\Z[\pi_1X].
\]
{\color{black}This is an intersection pairing on the homology of the universal cover, recalled in} Section~\ref{subsec:formulae}. In fact, we use $\lambdabar$, defined as $\lambda$ minus its term at $1\in\pi_1X$. Finally, we write $\lambda(\bm{k},a)\coloneqq(-1)^{d-1}\ol{\lambda(a,\bm{k})}$ for $a\in\pi_{d-1}X$ and $\bm{k}\in\pi_1(X,\partial X)$, using the involution on the group ring $\Z[\pi_1X]$ linearly extending $\ol{g}\coloneqq g^{-1}$. The following result{\color{black}s} will be proven in Section~\ref{subsec:formulae}, and applied in  Section~\ref{sec:applications}.
\begin{mainthm}\label{thm:dax-compute}
    Assume $d\geq 3$ and denote by $\bm{\u}$ the homotopy class of $\u$ in $\pi_1(X,\partial X)$. For any $a\in\pi_{d-1}X$ and $g\in\pi_1X$ we have
    \begin{enumerate}[label=(\Roman*)]
    \item\label{eq:A}
        $\dax_\u(a)=\dax(a)+\lambdabar(a,\bm{\u})$,
    \item\label{eq:B}
        $\dax(ga)=g\dax(a)\ol{g}-\lambdabar(g a,g)+\lambdabar(g,g a)$.
    \end{enumerate}
\end{mainthm}
\begin{maincor}\label{cor:dax-compute}
% \hfill
    \begin{enumerate}
\item\label{eq:cor1}
    If $a$ has an embedded representative, then $\dax_\u(a)=\lambdabar(a,\bm{\u})$.
\item\label{eq:cor2}
        $\dax_{g\u}(a)-\dax_\u(a)=\lambdabar(a,g\bm{\u})-\lambdabar(a,\bm{\u})$.% g\lambda(\ol{g},a)
\item\label{eq:cor3}
        $\dax_{g\u}(g a)-g\dax_\u(a)\ol{g}=\lambdabar(g,ga)$.
\item\label{eq:cor4}
        $\dax_\u(g a)-g\dax_\u(a)\ol{g}=\lambdabar(g a,\bm{\u})-\lambdabar(g a,g\bm{\u})+\lambdabar(g,g a)$.% \ol{\lambda(g a,\bm{\u})}+g\big(\dax_\u(a)-\ol{\lambda(a,\bm{\u})}\big)\ol{g}+\lambda(g,g a)-\lambda(g a,g)
\item\label{eq:cor5}
    If $a$ has an embedded representative, then
        $\dax_\u(g a)=\lambdabar(g a,\bm{\u})-\lambdabar(g a,g)+\lambdabar(g,g a)$.
    \end{enumerate}
\end{maincor}
\begin{remark}
    We have $\pi_0\Arcs{X}\cong\pi_0\Map_\partial(\D^1,X)=\{f\colon\D^1\to X\mid f(\partial\D^1)=\{x_{\pm}\}\}/\simeq$ ($\cong\pi_1X$ via $p_\u$) if $d\geq4$, so $\Dax$ and $\dax_\u$ depend only on the homotopy class $\bm{\u}\in\pi_0\Map_\partial(\D^1,X)$. However, we define $\Dax$ also for $d=3$, when it can depend on $\u$ itself, see Section~\ref{intro-sec:3d}.
\end{remark}
\begin{remark}\label{rem:KT-mu3}
    In \cite{KT-LBT} we exhibit for $d=4$ a relation between $\dax$ and Wall's self-intersection invariant $\mu_3$, also important for computations; for example, using it we showed that every finitely generated abelian group is realised as $\ker(\pi_1p_\u)$. See also Remark~\ref{rem:KTcor} and Theorem~\ref{thm:concordance} below.
\end{remark}
 
\subsection{Circles}\label{intro-sec:circles}
For a $d$-dimensional manifold $N$ (with or without boundary) we study the space $\Emb(\S^1,N)$ of knots in $N$, by comparing it to the space of immersions via the inclusion
\[
    \incl_N\colon\Emb(\S^1,N)\hra\Imm(\S^1,N).
\]
Given an arbitrary basepoint $\s\colon\S^1\hra N$, pick a small open ball $D^d=\mathrm{int}(\D^d)\subseteq N$ around the point $\s(e)$. Let $\u\colon\D^1\hra N\sm D^d$ be the neatly embedded arc obtained by restricting $\s$ so that $\u(\D^1)=\s(\S^1)\cap(N\sm D^d)$, see Figure~\ref{fig:Phi}. In Section~\ref{sec:circles} we will relate the homotopy type of $\Emb(\S^1,N)$ based at $\s$ to the space $\Arcs{N\sm D^d}$ based at $\u$, and use it to prove the following {\color{black}(the first part seems to be well known, but we provide a proof for the sake of completeness, see Section~\ref{subsec:immersed-circles}).}
\begin{mainthm}\label{thm:circles-main}
Assume $d\geq 4$ {\color{black}and denote by $\bm{\s}\in\pi_1N=\pi_1(N,\s(e))$ the homotopy class of $\s$.}
\begin{enumerate}[label=(\Roman*)]
    \item 
    There is a bijection of $\pi_0\Imm(\S^1,N)$ with the set of conjugacy classes of $\pi_1N$, and for any basepoint $\s\in\Imm(\S^1,N)$ and $1\leq n\leq d-3$ there is a group extension
    \begin{equation}\label{eq:B-imm}
    \begin{tikzcd}
        \faktor{\pi_{n+1}(N)}{\langle a-\bm{\s}\,a\rangle} \ar[tail]{r} & 
        \pi_n\big(\Imm(\S^1,N),\s\,\big) \ar[two heads]{r}{} & 
        \big\{b\in\pi_nN \mid b=\bm{\s}\,b\big\}.
    \end{tikzcd}
    \end{equation}
    Moreover, if $\s$ is nullhomotopic, this extension \emph{splits}.
    \item\label{thm:circles-mainII}
    For $0\leq n\leq d-4$ the map $\incl_N$ induces isomorphisms $\pi_n(\Emb(\S^1,N),\s)\cong \pi_n(\Imm(\S^1,N),\s)$ while for $n=d-3$ there is a group extension
    \begin{equation}\label{eq:B-emb}
    \begin{tikzcd}[column sep=large,row sep=tiny]
        \faktor{\Z[\pi_1N\sm1]}{rel_{\s}}\ar[tail,shift left]{r}{\partial\realmap} & \pi_{d-3}\big(\Emb(\S^1,N),\s\,\big)\arrow[shift left,dashed]{l}{\Dax} \ar[two heads]{r}{\pi_{d-3}(\incl_N,\s)} & \pi_{d-3}\big(\Imm(\S^1,N),\s\,\big),
    \end{tikzcd}
    \end{equation}
    for $rel_{\s}\coloneqq\dax_\u(\pi_{d-1}(N\sm D^d))\oplus\big\{\Dax(\delta^{whisk}_{\s}(c)) \mid c=\bm{\s}\,c\in\pi_{d-2}N\big\}$, where the family of embedded arcs $\delta^{whisk}_{\s}(c)\in\ker\pi_{d-3}(\incl_{N\sm D^d},\u)$ is obtained from $\s$ by isotoping $\s(e)$ around $c$. 
    Moreover, if $\s$ is nullhomotopic, then $\delta^{whisk}_{\s}(c)$ vanishes for all $c$.
\end{enumerate}
\end{mainthm}
% \begin{cor}\label{cor:iso-of-circles-from-arcs}
%     For the natural inclusion $\incl_N$ for circles in $N$ to induce an isomorphism
%     \[
%         \pi_{d-3}(\incl_N,\s)\colon\quad\pi_{d-3}(\Emb(\S^1,N),\s)\cong\pi_{d-3}(\Imm(\S^1,N),\s)
%     \]
%     it suffices that the natural inclusion $\incl_{N\sm D^d}$ for arcs in $N\sm D^d$ induces an isomorphism 
%     \[
%         \pi_{d-3}(\incl_{N\sm D^d},\u)\colon\quad\pi_{d-3}(\Arcs{N\sm D^d},\u)\cong\pi_{d-3}(\Imm_\partial(\D^1,N\sm D^d),\u).
%     \]
% \end{cor}
% \begin{cor}\label{cor:iso-of-circles-and-arcs}
%     For the group $\ker\pi_{d-3}(\incl_N,\s)$ to be isomorphic to $\ker\pi_{d-3}(\incl_{N\sm D^d},\u)$ it suffices that $\s$ is nullhomotopic or that $\{c\in\pi_{d-2}N \mid c=\bm{\s}\,c\}=\{0\}$.
% \end{cor}
{\color{black}We can use Corollary~\ref{cor:dax-compute} to identify a big class of relations in $rel_\s$.}
\begin{cor}\label{cor:gPhi}
    In the above setting, let $\Phi\colon\S^{d-1}=\partial\D^d\hra N\sm D^d$ be a parametrization of the boundary of the removed ball, and $\bm{\Phi}\in\pi_{d-1}(N\sm D^d)$ its homotopy class. Then for any $g\in\pi_1N$ we have 
$\dax_\u(g\,\bm{\Phi})=(-1)^{d-1}\ol{g}-g\ol{\bm{\s}}\pmod{1}$. In particular, these expressions belong to $rel_{\s}$.
\end{cor}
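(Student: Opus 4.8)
\emph{Proof plan.}
The plan is to apply Corollary~\ref{cor:dax-compute}\ref{eq:cor5} with $a=\bm\Phi$, which is legitimate because $\Phi$ is an embedding $\S^{d-1}\hra\partial(N\sm D^d)\subseteq N\sm D^d$ (which we may take to be based at $x_-$), so $\bm\Phi$ has an embedded representative. Identifying $\pi_1(N\sm D^d)=\pi_1N$, this gives for every $g\in\pi_1N$ the identity
\[
    \dax_\u(g\bm\Phi)=\lambdabar(g\bm\Phi,\bm\u)-\lambdabar(g\bm\Phi,g)+\lambdabar(g,g\bm\Phi)
\]
in $\Z[\pi_1N\sm1]$, where $g$ enters the second slot of $\lambda$ via its image in $\pi_1(N\sm D^d,\partial(N\sm D^d))$ --- equivalently, in the notation of Theorem~\ref{thm:dax-compute}, as $g\bm{\u_-}$, represented by a loop based at $x_-$. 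Using left $\pi_1$-equivariance of $\lambda$ in its first slot, $\lambda(g\bm\Phi,\bm k)=g\cdot\lambda(\bm\Phi,\bm k)$, and the defining symmetry $\lambda(g,g\bm\Phi)=(-1)^{d-1}\ol{\lambda(g\bm\Phi,g)}$, it remains only to compute $\lambda(\bm\Phi,\bm\u)$ and $\lambda(\bm\Phi,g)$.

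These are elementary linking computations, which I would perform by pushing $\Phi(\S^{d-1})=\partial\D^d$ slightly off itself into $N\sm D^d$, to a parallel sphere $\Phi_\varepsilon$ bounding, together with a collar, the removed ball; the whisker for $\bm\Phi$ then becomes the short radial arc out of $x_-$. A generic properly embedded arc $\gamma\subseteq N\sm D^d$ meets $\Phi_\varepsilon$ transversally, exactly once near each point where $\gamma$ touches $\partial\D^d$; two crossings arising from a point where $\gamma$ leaves and a point where it enters $\partial\D^d$ carry opposite signs; and the $\pi_1$-label of a crossing is trivial if $\gamma$ leaves $x_-$ there, and is otherwise the class of the loop ``along $\gamma$ from $x_-$ to the crossing, then back to $x_-$ through $\Phi_\varepsilon$ and the whisker''. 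Applied to $\gamma=\u$, whose endpoints $x_-,x_+$ both lie on $\partial\D^d$, this gives $\lambda(\bm\Phi,\bm\u)=1-[\beta\ast\ol\u]$ with $\beta$ any path in $\partial\D^d$ from $x_-$ to $x_+$; and as $\beta$ together with the arc $\s\cap\D^d$ bounds a disk in $\D^d$, the class $[\beta\ast\ol\u]$ is identified with $\ol{\bm\s}$ under $\pi_1(N\sm D^d)\cong\pi_1N$, so $\lambda(\bm\Phi,\bm\u)=1-\ol{\bm\s}$. Applied to a loop representing $g$ at $x_-$, the same count gives $\lambda(\bm\Phi,g)=1-\ol g$.

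It follows that $\lambda(g\bm\Phi,\bm\u)=g-g\ol{\bm\s}$, $\lambda(g\bm\Phi,g)=g-1$ and $\lambda(g,g\bm\Phi)=(-1)^{d-1}(\ol g-1)$; deleting the terms at $1$ and substituting into the displayed identity yields, modulo $1$,
\[
    \dax_\u(g\bm\Phi)=(g-g\ol{\bm\s})-g+(-1)^{d-1}\ol g=(-1)^{d-1}\ol g-g\ol{\bm\s},
\]
the two copies of $g$ cancelling. Finally $g\bm\Phi\in\pi_{d-1}(N\sm D^d)$, and by construction $rel_{\s}$ contains $\dax_\u(\pi_{d-1}(N\sm D^d))$, so this element lies in $rel_{\s}$.

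I expect the real work --- and the one genuine obstacle --- to be in the second paragraph: fixing the signs and, above all, the $\pi_1$-labels of the two crossings of $\Phi_\varepsilon$ with each arc in accordance with the precise orientation conventions defining $\lambda$ in Section~\ref{subsec:formulae}, and correctly transporting the resulting class in $\pi_1(N\sm D^d)$ to $\bm\s\in\pi_1N$ along the standard isomorphism. This is carried out when $\u$ (equivalently $\bm\s$) is nullhomotopic in \cite[Lem.~4.10]{KT-LBT}.
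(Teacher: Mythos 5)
Your proposal is correct and follows the paper's proof essentially step for step: apply Corollary~\ref{cor:dax-compute}\ref{eq:cor5} to the embedded representative $\Phi$, compute $\lambda(\bm\Phi,\bm\u)=1-\ol{\bm\s}$ and $\lambda(\bm\Phi,g)=1-\ol g$ by pushing $\Phi$ off and counting the two intersections near the endpoints of the second-slot arc, then substitute and simplify. The only difference is presentational --- you defer the sign bookkeeping to \cite{KT-LBT}, whereas the paper carries it out briefly in words --- but the computation is identical.
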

\begin{proof}
    Since $\bm{\Phi}$ has an embedded representative, Corollary~\ref{cor:dax-compute}\ref{eq:cor5} gives 
    \begin{equation}\label{eq:gPhi-proof}
        \dax_\u(g\bm{\Phi})=\lambdabar(g\bm{\Phi},\bm{\u})-\lambdabar(g\bm{\Phi},g)+\lambdabar(g,g\bm{\Phi}).
    \end{equation}
    We claim that $\lambda(\bm{\Phi},\bm{\u})=1-\ol{\bm{\s}}$, so $\lambda(g\bm{\Phi},\bm{\u})=g\lambda(\bm{\Phi},\bm{\u})=g-g\ol{\bm{\s}}$, by a property of $\lambda$ (see Lemma~\ref{lem:lambda}).
        Indeed, there are two intersection points of a pushoff $\Phi'\colon\S^{d-1}\hra N\sm D^d$ and $\bm{\u}$ as in Figure~\ref{fig:Phi}.
    The first point is close to $\u(-1)$, with the group element clearly $1\in\pi_1N$ and positive sign, and the other one is close to $\u(+1)$, with the negative sign (only the orientation of $\u$ changed) and the group element given by the dashed arc followed by $\u^{-1}$, so homotopic to $(\s\cap D^d)^{-1}\cdot\u^{-1}=\s^{-1}$.
\begin{figure}[!htbp]
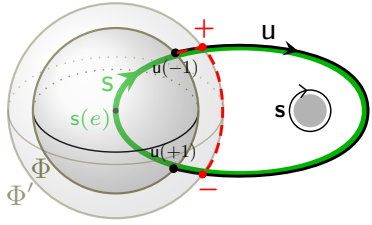

    \centering
    \includestandalone[mode=buildmissing,width=0.33\linewidth]{figures/fig-Phi}
    \caption{The setting of Theorem~\ref{thm:circles-main} and Corollary~\ref{cor:gPhi}. {\color{black}Removing a $d$-ball from a manifold $N$, leaves a boundary component $\Phi$ and turns an embedded circle $\s$ into a neatly embedded arc $\u$. The push-off $\Phi'$ is a $(d-1)$-sphere that intersects $\u$ in two points of opposite sign.}}%, with group elements $1$ and $-\s^{-1}$.}
    \label{fig:Phi}
\end{figure}

    By the same argument we also find $\lambda(g\bm{\Phi},g)=g\lambda(\bm{\Phi},g)=g(1-\ol{g})=g-1$, so $\lambdabar(g\bm{\Phi},g)=g\pmod{1}$, and by definition $\lambdabar(g,g\bm{\Phi})=(-1)^{d-1}\ol{g}\pmod{1}$. Plugging this into \eqref{eq:gPhi-proof} we have
    \[
        \dax_\u(g\bm{\Phi}) = g - g\ol{\bm{\s}} - g + (-1)^{d-1}\ol{g} = (-1)^{d-1}\ol{g} - g\ol{\bm{\s}}\pmod{1}.\qedhere
    \]
\end{proof}
\begin{remark}
    See \cite[Lem.4.10]{KT-LBT} for a similar computation (when $\u$ is nullhomotopic). Note that the class $g\,\bm{\Phi}\in\pi_{d-1}(N\sm D^d)$ is rarely trivial: in fact, this is the case if and only if $N$ is a simply connected rational homology sphere (in which case $g=\bm{\s}=1$), see Lemma~\ref{lem:QHS}.
\end{remark}
\begin{figure}[!htbp]
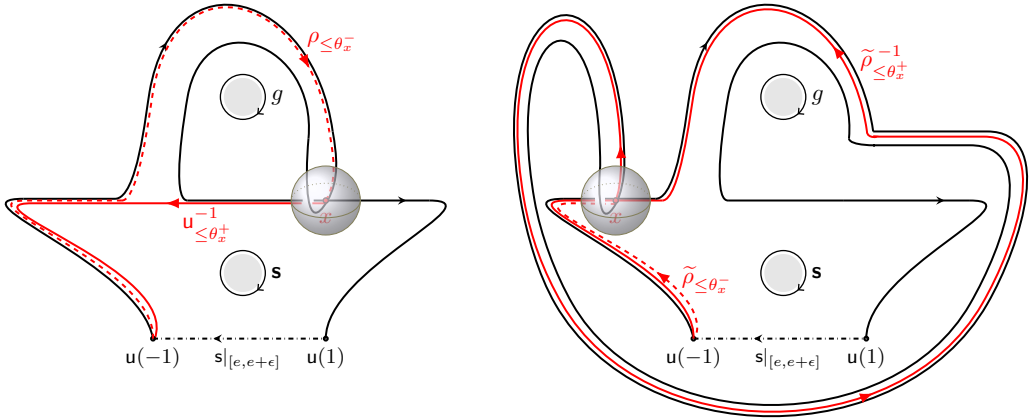

    \centering
    \includestandalone[mode=buildmissing,width=0.92\linewidth]{figures/fig-explaining-circles}
    \caption{\color{black}
    \emph{Left.} The $(d-3)$-family of circles $c(g)\coloneqq\partial\realmap(g)\cdot\s|_{[e,e+\epsilon]}$ in $N$. \emph{Right.} After an isotopy of $c(g)$ in $N$, the restriction to $N\sm D^d$ is a family of arcs with $\Dax(\partial\realmap(g)^{new})=(-1)^{d-3}[\widetilde{\rho}_{\leq\theta^-_x}\cdot\widetilde{\rho}_{\leq\theta^+_x}^{-1}]=(-1)^{d-3}\bm\s^{-1}g^{-1}$.}
    \label{fig:explaining-circles}
\end{figure}
{\color{black}
Finally, to understand where the relations $\ol{g}=(-1)^{d-1}g\ol{\bm{\s}}$ come from, consider $\u=\s|_{\S^1\sm[e,e+\epsilon]}$ restricted from $\s\colon\S^1\hra N$ and the class $\partial\realmap(g)$ on $\u$, as in Figure~\ref{fig:explaining-circles}. Then in $N$ we can slide the tip of the finger along $\s^{-1}$, to reach the final position as on the left of the figure. Whereas $\Dax(\partial\realmap(g))=g$ as explained in Figure~\ref{fig:realmap}, for the newly obtained arc family we have $\Dax(\partial\realmap(g)^{new})=(-1)^{d-3}\ol{\bm\s}\ol{g}$. Thus, for $\Dax$ for circles to be well defined we must have $(-1)^{d-3}\ol{\s g}-g=0$, or equivalently $(-1)^{d-1}\ol{h}-h\ol{\s}=0$ for all $h\in\pi_1N$.
}

\subsection*{Outline}
Sections~\ref{intro-sec:examples} and \ref{intro-sec:more-examples} contain examples for $d\geq4$, while Section~\ref{intro-sec:3d} studies $d=3$, the relation to finite type and concordance invariants, and states Theorem~\ref{thm:circles-3d}. In Section~\ref{sec:arcs} we discuss arcs, proving Theorem~\ref{thm:dax-compute} and Corollary~\ref{cor:dax-compute}. In Section~\ref{sec:circles} we discuss circles, proving Theorems~\ref{thm:circles-main}~and~\ref{thm:circles-3d}.

\subsection*{Acknowledgements} 
I wish to thank Peter Teichner for many helpful discussions related to the Dax invariant, which in particular helped me sort out the statement of Theorem~\ref{thm:dax-compute}. Thank you also to Greg Arone for an email correspondence, and Pedro Boavida for comments. A part of this work was done at LAGA, Université Sorbonne Paris Nord, funded by FSMP.

\section{Applications}\label{sec:applications}

\subsection{Simply connected examples}\label{intro-sec:examples}

The part \ref{cor:arcs-simply-conn} of the next corollary is immediate from Theorem~\ref{thm:KT-arcs}. The part \ref{cor:circles-simply-conn} follows from  \ref{cor:arcs-simply-conn} using Corollary~\ref{cor:iso-of-circles-from-arcs} and the fact from Theorem~\ref{thm:circles-main} that the sequence \eqref{eq:B-imm} splits if $\s$ is nullhomotopic.

\begin{cor}\label{cor:1-trivial}
Let $X$ and $N$ be $d$-manifolds with $d\geq4$ and $\partial X\neq\emptyset$.
\begin{enumerate}
\item\label{cor:arcs-simply-conn}
    If $\pi_1X=1$ then embedded and immersed arcs in $X$ cannot be distinguished by $(d-3)$-parameter families: $\pi_{d-3}\Arcs{X}\cong\pi_{d-3}\ImArcs{X}\cong\pi_{d-2}X$.
\item\label{cor:circles-simply-conn}
    Similarly, if $\pi_1N=1$ then the natural inclusion induces an isomorphism
    \[
        \pi_{d-3}\Emb(\S^1,N)\cong\pi_{d-3}\Imm(\S^1,N)\cong \pi_{d-3}N\oplus\pi_{d-2}N.
    \]
\end{enumerate}
\end{cor}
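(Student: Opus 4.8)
\subsection*{Proof proposal}

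The plan is to obtain both isomorphisms by specialising Theorems~\ref{thm:KT-arcs} and~\ref{thm:circles-main} to the case of a trivial fundamental group, where all the ``Dax-type'' correction terms disappear; so the work is really just unwinding what those theorems say.

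First I would treat part~\ref{cor:arcs-simply-conn}. With $\pi_1X=1$ the group ring $\Z[\pi_1X\sm1]$ is the zero group, so the source $\faktor{\Z[\pi_1X\sm1]}{\dax_\u(\pi_{d-1}X)}$ of the extension~\eqref{eq:thm1} vanishes no matter what $\dax_\u$ is, and hence the surjection $\pi_{d-3}p_\u\colon\pi_{d-3}\Arcs{X}\to\pi_{d-2}X$ is an isomorphism. Since $p_\u$ on $\Arcs{X}$ factors as $\incl_X$ followed by the immersion map $\ImArcs{X}\to\Omega X$, and the latter induces an isomorphism $\pi_{d-3}\ImArcs{X}\cong\pi_{d-1}X$... more precisely $\pi_{d-3}\ImArcs{X}\cong\pi_{d-2}X$ by Smale~\cite{Smale} (the degree $n=d-3$ lies exactly at the top of his range $n\leq d-3$), the two-out-of-three property forces $\incl_X$ to induce an isomorphism $\pi_{d-3}\Arcs{X}\cong\pi_{d-3}\ImArcs{X}$; composing these gives $\pi_{d-3}\Arcs{X}\cong\pi_{d-3}\ImArcs{X}\cong\pi_{d-2}X$. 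The one point to be careful about is that the isomorphism produced this way is genuinely induced by $\incl_X$, as this is what is needed to feed into part~\ref{cor:circles-simply-conn}.

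For part~\ref{cor:circles-simply-conn} I would first observe that excising an open $d$-ball with $d\geq3$ leaves the fundamental group unchanged, so $\pi_1(N\sm D^d)=\pi_1N=1$; hence part~\ref{cor:arcs-simply-conn} applies verbatim to $N\sm D^d$, and Corollary~\ref{cor:iso-of-circles-from-arcs} then gives that $\incl_N$ induces an isomorphism $\pi_{d-3}\Emb(\S^1,N)\cong\pi_{d-3}\Imm(\S^1,N)$. It remains to compute the immersion side via part~(I) of Theorem~\ref{thm:circles-main} at $n=d-3$: since $\pi_1N=1$ the basepoint $\s$ is nullhomotopic, so $\bm{\s}=1$, the subgroup $\langle a-\bm{\s}\,a\rangle$ is trivial, and $\{b\in\pi_{d-3}N\mid b=\bm{\s}\,b\}=\pi_{d-3}N$. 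Therefore~\eqref{eq:B-imm} reduces to an extension $\pi_{d-2}N\hookrightarrow\pi_{d-3}\Imm(\S^1,N)\twoheadrightarrow\pi_{d-3}N$, which splits (again by Theorem~\ref{thm:circles-main}, as $\s$ is nullhomotopic); since all groups in sight are abelian for $d\geq4$ (for $d\geq5$ because $d-3\geq2$, and for $d=4$ because then $\pi_{d-3}N=\pi_1N$ is trivial), the split extension is a direct sum $\pi_{d-3}N\oplus\pi_{d-2}N$.

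I do not expect a genuine obstacle here: essentially everything is bookkeeping against Theorems~\ref{thm:KT-arcs} and~\ref{thm:circles-main}. The closest thing to a real step is the two-out-of-three argument in part~\ref{cor:arcs-simply-conn} that upgrades the bare isomorphism $\pi_{d-3}\Arcs{X}\cong\pi_{d-2}X$ to an isomorphism \emph{induced by the inclusion}, together with the bookkeeping check that ``$\s$ nullhomotopic'' simultaneously trivialises the twisted action $a\mapsto\bm{\s}\,a$ and splits the immersion extension. All the substantive content has been front-loaded into the two cited theorems, and this corollary is just their simply connected shadow.
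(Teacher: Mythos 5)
Your proposal is correct and follows essentially the same route as the paper: part~(1) by noting $\Z[\pi_1X\sm1]=0$ in Theorem~\ref{thm:KT-arcs}, and part~(2) via Corollary~\ref{cor:iso-of-circles-from-arcs} applied to the simply connected $N\sm D^d$ together with the splitting of~\eqref{eq:B-imm} for nullhomotopic $\s$ from Theorem~\ref{thm:circles-main}. The small extra points you flag (the two-out-of-three check that the isomorphism is induced by $\incl_X$, and that the split extension is a direct sum because everything in sight is abelian) are the details the paper leaves implicit, and you handle them correctly.
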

For example, if $d\geq4$ then $\pi_{d-3}\Arcs{\D^d}\cong0$. In fact, Budney shows in \cite[Prop.~3.9]{Budney-family} that the first homotopy group that distinguishes $\Arcs{\D^d}$ and $\ImArcs{\D^d}$ is in degree $2(d-3)$, namely $\pi_{2(d-3)}\Arcs{\D^d}\cong\Z$, while $\pi_{2(d-3)}\ImArcs{\D^d}\cong\pi_{2d-5}\S^{d-1}$ is finite. 
\begin{remark}\label{rem:GW}
    %  One can show that $\hofib_\u(T_2\Arcs{ D^d}\to\ImArcs{ D^d})\simeq\Omega^2\S^{d-1}$ so to compute the homotopy groups $\pi_n\Arcs{ D^d}$ in the metastable range it remains to study the connecting map $\delta\colon\Omega\Imm_\partial(\D^1, D^d)\to\Omega^2\S^{d-1}$. It turns out that $\delta$ is a homotopy equivalence;\footnote{In particular, the second Taylor stage $T_2\Arcs{\I^d}\simeq*$, so $\incl_{ D^d}$ is nullhomotopic and so $\hofib_\u(\incl_{ D^d})\simeq\Arcs{ D^d}\times\Omega\ImArcs{ D^d}$.} see \cite{BCSS} for the proof in the case $d=3$, which works for any $d\geq3$ as pointed out by Budney \cite{Budney-family}.
    Whereas Budney describes the generator in the corresponding homology group $H_{2(d-3)}(\Arcs{\D^d};\Z)\cong\Z$ (which was computed by Turchin), we can directly write down a generating map
    \[
        \S^{2(d-3)}=\S^{d-3}\wedge\S^{d-3}\to\Arcs{\D^d}.
    \]
    It sends $(\vec{t}_1,\vec{t}_2)$ for $\vec{t}_i\in\S^{d-3}$ to the knot obtained by replacing a subinterval of the basepoint arc $\u$ with the arc obtained as the embedded commutator $[\mu_1(\vec{t}_1),\mu_2(\vec{t}_2)]$. Here $\mu_i\colon\S^{d-2}\hra\D^d$ are two different meridian spheres for $\u$, and $\mu_i(\vec{t}_i)$ are the arcs foliating them. 
    
    This is based on constructions from \cite{K-thesis-paper} and will appear in \cite{K-knotted}, where we more generally use \emph{gropes} of degree $n\geq1$ to give generators of the kernel of the surjection $\pi_{n(d-3)}\ev_n$, for the maps $\ev_n\colon\Arcs{X}\to T_n\Arcs{X}$ to the Goodwillie--Weiss tower. The present example is the case $n=2$, while the realisation map $\realmap$ from Theorem~\ref{thm:KT-arcs} is precisely degree $n=1$. 
    
    In fact, $\Dax$ is precisely the isomorphism induced by $\ev_2$, from $\ker(\pi_{d-3}\ev_1)$ to $\ker(\pi_{d-3}p_2)$, where $p_2\colon T_2\to T_1=\ImArcs{X}$. The latter is isomorphic to the quotient of $\pi_{d-3}\fib(p_2)\cong\Z[\pi_1X]$ by the image of the connecting map $\Omega T_1\to\fib(p_2)$, which agrees with $1\oplus\dax_\u$; cf.\ Remark~\ref{rem:GW-d3}.
\end{remark}

If $N$ is simply connected, Corollary~\ref{cor:circles-simply-conn} says that the {\color{black}lowest homotopy group that could distinguish embedded from immersed circles in $N$, actually does \emph{not}}. In particular, if $N$ is a simply connected 4-manifold, we have $\pi_0\Emb(\S^1,N)=1$ and $\pi_1\Emb(\S^1,N)\cong\pi_2(N)\cong H_2(N;\Z)$. 

This answers in negative a question of Arone and Szymik \cite{AS} whether for a simply connected 4-manifold the inclusion of embedded into immersed circles can fail to be injective on $\pi_1$. Injectivity was shown for a certain class of 4-manifolds by~\cite{Moriya}.

We thank Daniel Ruberman for pointing out the following application of this result. It is a classical theorem of Wall \cite{Wall} that for $N$ a closed simply connected oriented 4-manifold with indefinite $\lambda_N$, every automorphism of the intersection form $(H_2(N\#\S^2\times\S^2),\lambda)$ is \emph{realised by a diffeomorphism} of $N\#\S^2\times\S^2$. To prove this Wall takes an unknot $\s\colon\S^1\hra N$ and a tubular neighbourhood $\nu\s\colon\S^1\times\D^3\hra N$ and considers the composite $\rho$ of the connecting map
\[
    \pi_1(\Emb(\S^1\times\D^3,N),\nu\s)\to\pi_0\Diff_\partial(N\sm\nu\s)
\]
for the fibration sequence $\Diff_\partial(N\sm\nu\s)\to\Diff(N)\to \Emb(\S^1\times\D^3,N)$, and the map
\[
    \pi_0\Diff_\partial(N\sm\nu\s)\to\pi_0\Diff(N\#\S^2\times\S^2),
\]
defined using the inclusion $N\sm\nu\s\hra (N\sm\nu\s)\cup\D^2\times\S^2= N\#\S^2\times\S^2$ and extending by the identity. 
Wall then picks lifts $f_w\in\pi_1(\Emb(\S^1\times\D^3,N),\nu\s)$ of classes $w\in\pi_2N$ under the forgetful maps
\begin{equation}\label{eq:Wall-spherical}
    \pi_1(\Emb(\S^1\times\D^3,N),\nu\s)\to\pi_1(\Emb(\S^1,N),\s)\to\pi_2N.
\end{equation}
More explicitly, each class $f_w$ is given as a self-isotopy of $\s$ obtained by foliating a generically immersed $2$-sphere $S_w\colon\S^2\imra N$ representing $w$. Namely, keep $\s$ fixed near $e\in\S^1$ while the rest of $\S^1$ performs a ``lasso'' move around $S_w$ (so that in every moment of time the lasso is embedded).

However, $w\in\pi_2N\cong H_2(N;\Z)\cong H^2(N;\Z)$ does lift to such a class $f_w$ if and only if it satisfies $w^2=0\in\Z/2\cong H^4(N;\Z/2)$ (this can be seen using the long exact sequence for the fibration $\Emb(\S^1\times\D^3,N)\to\Emb(\S^1,N)$ whose fibre is homotopy equivalent to $\Omega SO(3)$). In fact, if $w^2=1$ then we instead obtain a diffeomorphism $\rho(f_w)\colon N\#\S^2\times\S^2\to N\#\S^2\widetilde{\times}\S^2$, where $\S^2\widetilde{\times}\S^2$ is the nontrivial $\S^2$ bundle over $\S^2$ (conversely, if such a diffeomorphism exists then $\lambda_N$ is odd).

Thus, for every $w$ of even square Wall obtains a diffeomorphism $\rho(f_w)$ of $N\#\S^2\times\S^2$, which he then uses to generate all automorphisms of $\lambda_{N\#\S^2\times\S^2}$.
On the other hand, we note that in \eqref{eq:Wall-spherical} the first map is injective since $\pi_2SO(3)=0$, and the second map is an isomorphism by Corollary~\ref{cor:1-trivial}\ref{cor:circles-simply-conn}. Therefore, we deduce the following observation about Wall's construction.
\begin{cor}
    For a closed simply connected oriented 4-manifold $N$, we have the identification $\pi_1(\Emb(\S^1\times\D^3,N){\color{black},\nu\s)}=\{f_w\mid w\in\pi_2N,\, w^2=0\pmod2\}$, so the image of Wall's homomorphism $\rho\colon\pi_1(\Emb(\S^1\times\D^3,N),\nu\s)\to\pi_0\Diff(N\#\S^2\times\S^2)$ consists precisely of the classes $\rho(f_w)$.
\end{cor}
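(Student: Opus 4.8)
The plan is to read everything off the long exact sequence of the fibration
\[
    F\simeq\Omega SO(3)\ \longrightarrow\ \Emb(\S^1\times\D^3,N)\ \longrightarrow\ \Emb(\S^1,N),
\]
together with the two ingredients already assembled in the discussion above the corollary: that the first map of \eqref{eq:Wall-spherical} is injective, and that Wall's classes $f_w$ are by construction lifts of the corresponding $w\in\pi_2N$.

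First I would analyse the composite $\Psi\colon\pi_1(\Emb(\S^1\times\D^3,N),\nu\s)\to\pi_2N$ of \eqref{eq:Wall-spherical}. Its first factor is injective because $\pi_1F\cong\pi_1\Omega SO(3)\cong\pi_2SO(3)=0$, and its second factor is the isomorphism $\pi_1(\Emb(\S^1,N),\s)\cong\pi_2N$ from Corollary~\ref{cor:1-trivial}\ref{cor:circles-simply-conn} (applied with $d=4$ and $\pi_1N=1$, so that $\pi_{d-3}N\oplus\pi_{d-2}N=\pi_2N$); hence $\Psi$ is injective. Continuing the exact sequence one step further gives $\mathrm{im}(\Psi)=\ker\big(\partial\colon\pi_2N\to\pi_0F\big)$, where $\pi_0F\cong\pi_1SO(3)\cong\Z/2$ and $\partial$ is the mod-$2$ self-intersection map $w\mapsto w^2$ (equivalently $w\mapsto\langle w_2(N),\mathrm{PD}(w)\rangle$). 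This identification of the connecting homomorphism is exactly the ``$w$ lifts iff $w^2=0$'' statement recorded above the corollary, and it is the one step that uses a genuine geometric input: the normal framing of the circle dragged once around a generically immersed sphere $S_w$ returns twisted by $[S_w]\cdot[S_w]$. Therefore $\mathrm{im}(\Psi)=\{w\in\pi_2N\mid w^2\equiv0\pmod2\}$.

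Finally I would match this image with the classes $f_w$. By Wall's construction, for each $w$ with $w^2\equiv0\pmod2$ the lasso self-isotopy $f_w$ of $\s$ (embedded at every time, standard near $e$) lifts to a class in $\pi_1(\Emb(\S^1\times\D^3,N),\nu\s)$ with $\Psi(f_w)=w$, and by injectivity of $\Psi$ this lift is unique. Thus as $w$ ranges over $\{w^2\equiv0\}=\mathrm{im}(\Psi)$ the classes $f_w$ range over the entire $\Psi$-preimage, which is all of $\pi_1(\Emb(\S^1\times\D^3,N),\nu\s)$; this is the asserted identification. Since $\rho$ is a group homomorphism defined on this group, its image is then precisely $\{\rho(f_w)\mid w\in\pi_2N,\ w^2\equiv0\pmod2\}$. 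The main obstacle in writing this out carefully is the identification of $\partial$ with the self-intersection form; everything else is bookkeeping with the exact sequence of the fibration and one application of Corollary~\ref{cor:1-trivial}\ref{cor:circles-simply-conn}.
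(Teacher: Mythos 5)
Your proposal is correct and follows the same route the paper takes: injectivity of the composite in \eqref{eq:Wall-spherical} from $\pi_2 SO(3)=0$ together with the isomorphism of Corollary~\ref{cor:1-trivial}\ref{cor:circles-simply-conn}, and identification of the image as $\{w\mid w^2\equiv0\}$ via the long exact sequence of the fibration with fibre $\Omega SO(3)$. The only place you go beyond the paper's text is spelling out that the connecting map $\pi_2 N\to\pi_0\Omega SO(3)\cong\Z/2$ is the mod-$2$ self-intersection form, which the paper simply asserts inside a parenthetical remark.
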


\subsection{Further examples}\label{intro-sec:more-examples}
 {\color{black}The following corollaries are immediate from Theorems~\ref{thm:KT-arcs} and~\ref{thm:circles-main}. For the first we use that the set of relations $rel_{\s}$ contains $\dax_\u(\pi_{d-1}(N\sm D^d))$ and $\pi_1(N\sm D^d)=\pi_1N$. Recall that we denote $\ol{g}\coloneqq g^{-1}$.}
{\color{black}\begin{cor}
\begin{enumerate}
\item\label{cor:iso-of-circles-from-arcs}
    For the inclusion $\incl_N$ of embedded into immersed circles in $N$ to induce an isomorphism $\pi_{d-3}(\incl_N,\s)$,
    it suffices that the analogous inclusion $\incl_{N\sm D^d}$ for arcs in $N\sm D^d$ induces an isomorphism 
    $\pi_{d-3}(\incl_{N\sm D^d},\u)$.
\item\label{cor:iso-of-circles-and-arcs}
    For the group $\ker\pi_{d-3}(\incl_N,\s)$ to be isomorphic to $\ker\pi_{d-3}(\incl_{N\sm D^d},\u)=\ker\pi_{d-3}p_\u$ it suffices that $\s$ is nullhomotopic or that $\{c\in\pi_{d-2}N \mid c=\bm{\s}\,c\}=\{0\}$.
\end{enumerate}
\end{cor}}

\begin{cor}\label{cor:d-1-trivial}
Let $X$ and $N$ be $d$-manifolds with $d\geq4$ and $\partial X\neq\emptyset$.
\begin{enumerate}
    \item\label{cor:arcs-d-1-conn}
    If $\pi_{d-1}X=0$ then there is a short exact sequence
    \[
    \Z[\pi_1X\sm1]\hra\pi_{d-3}(\Arcs{X},\u)\twoheadrightarrow\pi_{d-2}X.
    \]
    \item\label{cor:circles-d-1-conn}
    If $\pi_{d-2}N=\pi_{d-1}N=0$ then there is a short exact sequence
    \[
    \faktor{\Z[\pi_1N]}{\langle1\rangle\oplus\langle \ol{g}-(-1)^{d-1}g\ol{\bm{\s}}\mid g\in\pi_1N\rangle}\hra\pi_{d-3}(\Emb(\S^1,N),\s)\twoheadrightarrow\pi_{d-3}N.
    \]
\end{enumerate}
\end{cor}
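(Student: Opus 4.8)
The plan is to obtain both short exact sequences by specialising the group extensions in Theorems~\ref{thm:KT-arcs} and~\ref{thm:circles-main}; the only nonformal input is the computation of the subgroup of relations in the circles case. Part~\ref{cor:arcs-d-1-conn} is the quick one: since $d\geq4$ and $\partial X\neq\emptyset$, Theorem~\ref{thm:KT-arcs} supplies the extension~\eqref{eq:thm1} with kernel $\faktor{\Z[\pi_1X\sm1]}{\dax_\u(\pi_{d-1}X)}$, and since $\dax_\u$ of~\eqref{eq:dax-u-def} is a homomorphism the hypothesis $\pi_{d-1}X=0$ gives $\dax_\u(\pi_{d-1}X)=0$. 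Hence the kernel is all of $\Z[\pi_1X\sm1]$ and~\eqref{eq:thm1} reads $\Z[\pi_1X\sm1]\hra\pi_{d-3}(\Arcs{X},\u)\twoheadrightarrow\pi_{d-2}X$, with nothing further to check.

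For part~\ref{cor:circles-d-1-conn} I would begin with the extension~\eqref{eq:B-emb} of Theorem~\ref{thm:circles-main}\ref{thm:circles-mainII}, whose kernel is $\faktor{\Z[\pi_1N\sm1]}{rel_{\s}}$ and whose quotient is $\pi_{d-3}(\Imm(\S^1,N),\s)$, and then address the quotient, the whisker part of $rel_\s$, and the term $\dax_\u(\pi_{d-1}(N\sm D^d))$ in turn. First, Theorem~\ref{thm:circles-main}(I) with $n=d-3$: the hypothesis $\pi_{d-2}N=0$ kills the subgroup $\faktor{\pi_{d-2}N}{\langle a-\bm{\s}\,a\rangle}$ of~\eqref{eq:B-imm}, so $\pi_{d-3}(\Imm(\S^1,N),\s)$ is the $\bm{\s}$-invariant subgroup $\{b\in\pi_{d-3}N\mid b=\bm{\s}\,b\}\subseteq\pi_{d-3}N$ of the statement. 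Second, $\pi_{d-2}N=0$ forces any $c$ with $c=\bm{\s}\,c\in\pi_{d-2}N$ to vanish, and $\delta^{whisk}_{\s}(0)$ is the constant self-isotopy, so $\Dax(\delta^{whisk}_{\s}(0))=0$ and the whisker summand of $rel_\s$ is trivial; hence $rel_\s=\dax_\u(\pi_{d-1}(N\sm D^d))$.

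The one substantial step is to compute this last subgroup, and the hard part will be to show that, when $\pi_{d-1}N=0$, the abelian group $\pi_{d-1}(N\sm D^d)$ is generated by the $\pi_1N$-orbit $\{g\,\bm{\Phi}\mid g\in\pi_1N\}$ of the boundary class $\bm{\Phi}$. For $d\geq4$, general position gives $\pi_k(N\sm D^d)\cong\pi_kN$ for $k\le d-2$ and $\pi_{d-1}(N\sm D^d)\twoheadrightarrow\pi_{d-1}N$, while homotopy excision applied to $N=(N\sm D^d)\cup_{\S^{d-1}}\D^d$ --- passing to the universal cover, where an entire $\pi_1N$-orbit of balls has been removed --- identifies $\pi_d(N,N\sm D^d)$ with the free $\Z[\pi_1N]$-module on the class of $(\D^d,\S^{d-1})\hra(N,N\sm D^d)$, whose image under the connecting map is $\bm{\Phi}$; cf.\ Lemma~\ref{lem:QHS}. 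The long exact sequence of the pair then gives
\[
0\to\langle g\,\bm{\Phi}\mid g\in\pi_1N\rangle\to\pi_{d-1}(N\sm D^d)\to\pi_{d-1}N\to0,
\]
so $\pi_{d-1}N=0$ yields the claimed generation. As $\dax_\u$ is additive, $\dax_\u(\pi_{d-1}(N\sm D^d))$ is the subgroup of $\Z[\pi_1N\sm1]$ generated by the $\dax_\u(g\,\bm{\Phi})$, which by Corollary~\ref{cor:gPhi} equal $(-1)^{d-1}\ol{g}-g\ol{\bm{\s}}\pmod{1}$; multiplying each generator by the unit $(-1)^{d-1}$ rewrites the subgroup as $\langle\ol{g}-(-1)^{d-1}g\ol{\bm{\s}}\mid g\in\pi_1N\rangle\pmod{1}$. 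Finally, writing $\Z[\pi_1N\sm1]=\faktor{\Z[\pi_1N]}{\langle1\rangle}$ turns $\faktor{\Z[\pi_1N\sm1]}{rel_\s}$ into $\faktor{\Z[\pi_1N]}{\langle1\rangle\oplus\langle\ol{g}-(-1)^{d-1}g\ol{\bm{\s}}\mid g\in\pi_1N\rangle}$, which together with the identification of the quotient completes the proof. The only obstacle worth real attention is thus pinning down $\pi_{d-1}(N\sm D^d)$ and the location of $\bm{\Phi}$ inside it; everything else is bookkeeping with the two cited theorems and the additivity of $\dax_\u$.
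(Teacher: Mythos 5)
Your argument is correct and is essentially the one the paper intends (the corollary is stated as ``immediate'' from Theorems~\ref{thm:KT-arcs} and~\ref{thm:circles-main}); you supply the details the paper leaves implicit, namely that when $\pi_{d-1}N=0$ the exact sequence of Lemma~\ref{lem:hatcher} shows $\pi_{d-1}(N\sm D^d)$ is generated by $\{g\,\bm\Phi\mid g\in\pi_1N\}$, so the image of $\dax_\u$ is spanned by the values computed in Corollary~\ref{cor:gPhi}, and the vanishing of $\pi_{d-2}N$ kills the whisker part of $rel_\s$. One point worth flagging: your computation actually identifies the quotient in part~(2) as the $\bm\s$-fixed subgroup $\{b\in\pi_{d-3}N\mid b=\bm\s\,b\}$ coming from~\eqref{eq:B-imm}, rather than all of $\pi_{d-3}N$ as printed; these agree only when $\bm\s$ acts trivially on $\pi_{d-3}N$, so the published statement is slightly overstated (compare the $d=4$ caveat recorded in Corollary~\ref{cor:BG}).
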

In particular, if $X$ is aspherical, i.e.\ $\pi_{*>1}X=0$, then $\pi_{d-3}\Arcs{X}\cong\Z[\pi_1X\sm1]$. For example,
% aspherical manifolds with boundary can be obtained by taking a product of a closed aspherical manifold with a disk or with a compact orientable surface of positive genus; 
\[
    \pi_{d-3}(\Arcs{\S^1\times\D^{d-1}},\u)\cong\Z[\Z\sm 0],\quad \text{for any } \u.
\]
Similarly, in the case of circles we can take an aspherical (closed or not) manifold $N$, for example
\[
    \pi_{d-3}(\Emb(\S^1,\S^1\times\D^{d-1}),t^{k_0})\cong \faktor{\Z[t,t^{-1}]}{\langle t^0, t^{-k}-(-1)^{d-1}t^{k-k_0}\mid k\in\Z\rangle}.
\]
{\color{black}As another example,} a compact irreducible 3-manifold $Y$ with infinite $\pi_1Y$ is aspherical (by the Hurewicz theorem applied to its (noncompact) universal cover), so  $N\coloneqq\S^1\times Y$ is as well and
\[
    \pi_1(\Emb(\S^1,\S^1\times Y),\s)\cong \faktor{\Z[\Z\times\pi_1Y]}{\langle1\rangle\oplus\langle \ol{g}-(-1)^{d-1}g\ol{\bm{\s}}\mid g\in\Z\times\pi_1Y\rangle}.
\]
The following is a next simplest case, also computed in \cite{Budney-Gabai} by different means. 
\begin{cor}\label{cor:BG}
    Let $\s=t^{W_0}\in\Z\{ t\} \cong\pi_1(\S^1\times\S^{d-1})$ with $W_0\geq0$. Then we have for $d\geq5$:
    \begin{equation}\label{eq:BG}
    \pi_{d-3}(\Emb(\S^1,\S^1\times\S^{d-1}),\s)\cong\faktor{\Z[t,t^{-1}]}{\langle{\color{black}t^0}, t^{-1}+\dots+t^{-(W_0-1)},\,t^{-k}-(-1)^{d-1}t^{k-W_0}\mid k\in\Z\rangle}
    % \Z\langle t^k, k\geq -\lceil{W_0/2}\rceil \mid (-1)^{d-1}t^{-W_0/2}=t^{-W_0/2}\rangle
    \end{equation}
    while for $d=4$ the group $\pi_1(\Emb(\S^1,\S^1\times\S^3),\s)$ is the sum of the displayed group with $\Z$.
\end{cor}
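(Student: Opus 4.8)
The plan is to specialise Theorem~\ref{thm:circles-main} to $N=\S^1\times\S^{d-1}$ and then evaluate the resulting relations with Corollaries~\ref{cor:dax-compute} and~\ref{cor:gPhi}. Write $\pi_1N=\Z=\langle t\rangle$, so $\bm{\s}=t^{W_0}$. Since the universal cover $\wt N$ of $N$ is homotopy equivalent to $\S^{d-1}$, we have $\pi_kN\cong\pi_k\S^{d-1}$ for all $k\geq2$; in particular $\pi_{d-2}N=0$, and also $\pi_{d-3}N=0$ once $d\geq5$. Feeding this into the first part of Theorem~\ref{thm:circles-main} shows $\pi_{d-3}\Imm(\S^1,N)=0$ for $d\geq5$, so Theorem~\ref{thm:circles-main}\ref{thm:circles-mainII} identifies $\pi_{d-3}\Emb(\S^1,N)$ with $\faktor{\Z[\pi_1N\sm1]}{rel_{\s}}$. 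For $d=4$ one has instead $\pi_1\Imm(\S^1,N)\cong\Z$ (as $\pi_2N=0$ and $\pi_1N$ is abelian), and this free summand is split off in Theorem~\ref{thm:circles-main}\ref{thm:circles-mainII}, accounting for the extra $\Z$.

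Next I would pin down $rel_{\s}$. As $\pi_{d-2}N=0$, the set $\{\Dax(\delta^{whisk}_{\s}(c))\mid c=\bm{\s}\,c\in\pi_{d-2}N\}$ is empty, so $rel_{\s}=\dax_\u\big(\pi_{d-1}(N\sm D^d)\big)$. The ball $D^d$ may be placed inside the top $d$-cell of the standard CW structure on $\S^1\times\S^{d-1}$, whence $N\sm D^d$ deformation retracts onto the $(d-1)$-skeleton $\S^1\vee\S^{d-1}$; so $\pi_{d-1}(N\sm D^d)$ is the free rank-one $\Z[\pi_1N]=\Z[t,t^{-1}]$-module on the class $\iota$ of the sphere $\{\ast\}\times\S^{d-1}$, while the meridian class $\bm{\Phi}$ generates the augmentation-ideal submodule — concretely $\bm{\Phi}=(1-t)\iota$ up to a unit, since $\bm{\Phi}$ is the attaching class of the $d$-cell (equivalently it maps to $0$ in $H_{d-1}(N;\Z)$, the precise coefficient being read off from the two intersection points in Figure~\ref{fig:Phi}). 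Therefore $\pi_{d-1}(N\sm D^d)=\Z\,\iota\oplus\Z[\pi_1N]\,\bm{\Phi}$ as abelian groups, and since $\dax_\u$ is additive,
\[
    rel_{\s}=\big\langle\dax_\u(\iota)\big\rangle+\big\langle\dax_\u(g\,\bm{\Phi})\mid g\in\pi_1N\big\rangle.
\]

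The second summand is given by Corollary~\ref{cor:gPhi}: $\dax_\u(g\,\bm{\Phi})=(-1)^{d-1}\ol{g}-g\ol{\bm{\s}}\pmod1$, i.e.\ (with $g=t^k$) the subgroup generated by $t^{-k}-(-1)^{d-1}t^{k-W_0}$, $k\in\Z$. For the first summand, $\iota$ has the embedded representative $\{\ast\}\times\S^{d-1}$, so Corollary~\ref{cor:dax-compute}\ref{eq:cor1} gives $\dax_\u(\iota)=\lambdabar(\iota,\bm{\u})$; here I would evaluate $\lambda(\iota,\bm{\u})=\pm(t^{-1}+\dots+t^{-W_0})$, either by counting the $W_0$ transverse intersections of $\{\ast\}\times\S^{d-1}$ with $\u$ and their $\pi_1N$-decorations, or — more cleanly — by dividing the value $\lambda(\bm{\Phi},\bm{\u})=1-\ol{\bm{\s}}$ from the proof of Corollary~\ref{cor:gPhi} by $1-t$, using that $\lambda$ is $\Z[\pi_1N]$-linear in the first variable and $1-t$ is a non-zero-divisor. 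It then remains to simplify the presentation: in $\Z[\pi_1N\sm1]$ the relation indexed by $k=W_0$ reads $t^{-W_0}-(-1)^{d-1}\equiv t^{-W_0}$, so the generator $t^{-1}+\dots+t^{-W_0}$ may be traded for $t^{-1}+\dots+t^{-(W_0-1)}$; writing $\Z[\pi_1N\sm1]$ as $\Z[t,t^{-1}]/\langle t^0\rangle$ then yields the displayed group, to which one adjoins the free $\Z$ when $d=4$.

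I expect the delicate part to be the middle two paragraphs: getting the module structure right, in particular $\bm{\Phi}=(1-t)\iota$ — so that $\Z[\pi_1N]\,\bm{\Phi}$ is a \emph{proper} submodule and the single extra class $\iota$ genuinely contributes the relation $t^{-1}+\dots+t^{-(W_0-1)}$ that is invisible to Corollary~\ref{cor:gPhi} alone — and then carrying the signs and $\pi_1N$-decorations through the computation of $\lambda(\iota,\bm{\u})$ in a way consistent with the conventions behind Corollary~\ref{cor:gPhi}.
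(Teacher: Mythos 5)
Your proposal is correct and follows essentially the same route as the paper: specialise Theorem~\ref{thm:circles-main}, decompose $\pi_{d-1}(N\sm D^d)\cong\Z\,\bm{i_2}\oplus\Z[t,t^{-1}]\bm{\Phi}$, and evaluate the relations from $\bm{i_2}$ and from $t^k\bm{\Phi}$ via Corollaries~\ref{cor:dax-compute}\ref{eq:cor1} and~\ref{cor:gPhi}. The one small variation is cosmetic: the paper obtains this decomposition from the long exact sequence of the pair $(N,N\sm D^d)$ together with Lemma~\ref{lem:hatcher} and computes $\lambda(\bm{i_2},\bm{\u})$ by a direct count (getting $1+t^{-1}+\dots+t^{-(W_0-1)}$), whereas you identify $\bm{\Phi}$ as the Whitehead product $(1-t)\iota$ and divide $\lambda(\bm{\Phi},\bm{\u})=1-\bm{\s}^{-1}$ by $1-t$; the resulting unit/sign discrepancy in $\lambda(\iota,\bm{\u})$ is, as you note, absorbed by the relation at $k=W_0$, so both give the same $rel_\s$.
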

\begin{proof}
    Denote $N\coloneqq\S^1\times\S^{d-1}$ and $X\coloneqq N\sm D^d$. As $\pi_n(N)=0$ for $2\leq n\leq d-2$, Theorem~\ref{thm:circles-main} implies $\pi_n(\Imm(\S^1,N),\s)=0$ for $0\leq n\leq d-3$, except that $\pi_1(\Imm(\S^1,N),\s)\cong\pi_1N$. 
    
    Moreover, $\pi_{d-1}X\cong\pi_{d-1}(\S^1\vee\S^{d-1})\cong\Z[t,t^{-1}]$, generated by $i_2\colon \{pt\}\times\S^{d-1}\subseteq X\subseteq N$. However, a more convenient generator is $\Phi\colon\partial D^d\hra X$, and the long exact sequence of the pair $(N,X)$ yields
    \[
        \pi_{d-1}X\cong\pi_{d-1}N\oplus \partial_*\pi_d(N,X)\cong\Z\langle \bm{i_2}\rangle\, \oplus\, \Z[t,t^{-1}]\langle \bm{\Phi}\rangle.
    \]
    Thus, if $d-3>1$ then Theorem~\ref{thm:circles-main} implies (using ${\color{black}\pi_{d-3}(\Imm(\S^1,N),\s)=0}$):
    \[
        \pi_{d-3}\Emb(\S^1,N)\cong\faktor{\langle t^k,t^{-k}\mid k>0\rangle}{\langle \dax_\u(\bm{i_2}),\dax_\u(t^k\bm{\Phi})\mid k\in\Z\rangle}
    \]
    while for $d-3=1$ we have this plus the group $\Z\cong\pi_1(\Imm(\S^1,N),\s)$. It is straightforward to compute $\lambda(\bm{i_2},t^{W_0})=1+t^{-1}+\dots+t^{-(W_0-1)}$, so $\dax_\u(\bm{i_2})=\lambdabar(\bm{i_2},t^{W_0})=t^{-1}+\dots+t^{-(W_0-1)}$ {\color{black}by Corollary~\ref{cor:dax-compute}}, while Corollary~\ref{cor:gPhi} gives $\dax_\u(t^k\bm{\Phi})=(-1)^{d-1}t^{-k}-t^k\bm{\s}^{-1}=(-1)^{d-1}t^{-k}-t^{k-W_0}$.
\end{proof}
\begin{remark}
    Note that the resulting abelian group in \eqref{eq:BG} is simply a $\Z^\infty$, {\color{black}unless $d$ and $W_0$ are both odd, when it is $\Z/2\oplus\Z^\infty$. Namely}, the generators $t^{-k}$ are redundant for $k> W_0/2$ by the second relation, {\color{black}which also gives} $(-1)^{d-1}t^{-W_0/2}=t^{-W_0/2}$ if $W_0$ even. This is empty unless $d$ is also even, in which case it says that $t^{-W_0/2}$ is 2-torsion. {\color{black}However, if $d$ and $W_0$ are even $t^{-W_0/2}$ vanishes by the first relation (for $W_0$ odd it is empty). On the other hand, if $d$ is odd, the first relation says that for $W_0$ even $t^{-W_0/2}$ is redundant, whereas for $W_0$ odd $t^{-1}+\dots+t^{-(W_0-1)/2}$ is 2-torsion}. 
    We just mention that one can get a similar result for $W_0<0$ (using $\lambda(\bm{i_2},t^{W_0})=-(t+\dots+t^{-W_0})$). 
    
    Finally, let us observe that $\pi_{d-2}(\S^1\times\S^{d-1})=0$, so Corollary~\ref{cor:iso-of-circles-and-arcs} applies, giving an isomorphism: $\pi_{d-3}(\Emb(\S^1,\S^1\times\S^{d-1}\sm\D^d),\u)\cong\pi_{d-3}(\Emb(\S^1,\S^1\times\S^{d-1}),\s)$.
\end{remark}

{\color{black}
\begin{remark}
    Let us compare Corollary~\ref{cor:BG} to the computation of Budney and Gabai in \cite[Sec.2]{Budney-Gabai}. Their group $\pi_{d-3}(\Emb(\S^1,\S^1\times\S^{d-1}),t^{W_0})\cong\Lambda_n^{W_0}$ is described as the quotient of the Laurent polynomial ring $\Z[t,t^{-1}]$ by a different set of relations, namely $\langle t^k-(-1)^d t^{W_0-1-k},t^0,t^1\rangle$. However, $\varphi(t^k)=t^{k+W_0}-t^{k+W_0-1}$ is a well-defined map from our group to $\Lambda_n^{W_0}$, and an explicit inverse $\psi$ is given by: $\psi(t^k)\coloneqq t^{k-W_0}+\dots+t^1+t^{0}$ for $k\geq W_0$, $\psi(t^k)\coloneqq -t^{k-(W_0-1)}-\dots-t^{-1}-t^0$ for $0\leq k\leq W_0-1$, and $\psi(t^k)\coloneqq (-1)^d \psi(t^{W_0-1-k})$ for $k\leq0$.
    
    Moreover, their class $\theta_{t^{W_0},k}\in\pi_{d-3}(\Emb(\S^1,\S^1\times\S^{d-1}),t^{W_0})$ is related to our realization map: the class $\partial\realmap(g)^{new}$ on the right of Figure~\ref{fig:explaining-circles} is up to a sign (not fixed in \cite{Budney-Gabai}) the class $\theta_{t^{W_0},k}$ for $s=t^{W_0}$ and $t^k=gs$ (the guiding arc for the finger), so $g=t^{k-W_0}$. Indeed, in their family the double point occurs \emph{before} the root of the finger. Whereas the cocircular method gives $W_2(\theta_{t^{W_0},k})=t^k-t^{k-1}$, we have $\Dax(\partial\realmap(t^{k-W_0})^{new})=(-1)^{d-3}t^{-k}=t^{k-W_0}$ as explained in that figure. Since $\varphi(t^{k-W_0})=t^k-t^{k-1}$, the two computations agree.
\end{remark}  
\begin{remark}
    As a further example, consider the ``self-referential'' family from \cite[Fig.13-15]{Gabai-disks}. Firstly, Gabai's $\tau_g$ is precisely our $\partial\realmap_u(g)$, and the foliation $\mathsf{selfref}_u(g)$ of his self-referential disk $D_g$ is the family depicted in Figure~\ref{fig:self-referential}.
    This is given similarly as $\partial\realmap(g)$, except that after following a guiding arc representing $g$, we come back and swing around the sphere $S$ linking the root of the finger, instead of just the basepoint arc. Therefore, the homotopy which pulls back using the ball that $S$ bounds produces two double points. The associated loops are computed in Figure~\ref{fig:self-referential}: $g$ at $x_2$ as before, and $-(-1)^{d-1}g^{-1}$ at $x_1$ since not only the guiding arc is reversed (giving $-1$) but also now the second sheet is the one moving (giving $(-1)^{d-1}$), cf.\ the proof of Corollary~\ref{cor:gPhi}. Thus,
    \[
        \Dax(\mathsf{selfref}_u(g))=g-(-1)^{d-1}\ol{g}.
    \]
    Indeed, Gabai has $d=4$ and $\Dax(\mathsf{selfref}_u(g))=g+\ol{g}$.
\begin{figure}[!htbp]
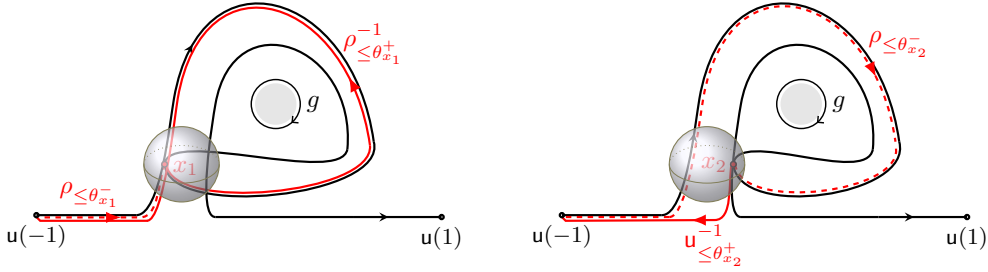

    \centering
    \includestandalone[mode=buildmissing,width=0.91\linewidth]{figures/fig-self-referential}
    \caption{\color{black}\emph{Left.} The double point loops $g_{x_1}=[\rho_{\leq\theta_{x_1}^-}\cdot\rho_{\leq\theta_{x_1}^+}^{-1}]$ and $g_{x_2}=[\rho_{\leq\theta_{x_2}^-}\cdot\rho_{\leq\theta_{x_2}^+}^{-1}]=[\rho_{\leq\theta_{x_2}^-}\cdot\u_{\leq\theta_{x_2}^+}^{-1}]$.}
    \label{fig:self-referential}
\end{figure}

    In particular, for $X=\S^1\times\S^{d-1}\sm D^d$ the previous remark gives $\varphi\Dax(\mathsf{selfref}_{t^{W_0}}(t^k))=\varphi(t^k-(-1)^{d-1}t^{-k})=t^{k+W_0}-t^{k+W_0-1}+(-1)^d(t^{-k+W_0}-(-1)^dt^{-k+W_0-1})=t^{k+W_0}-t^{k+W_0-1}+t^{k-1}-t^{k}$.
    Compare to a similar family $\alpha_{t^{W_0},k}\in\pi_{d-3}(\Emb(\S^1,\S^1\times\S^{d-1}),t^{W_0})$ in \cite[Fig.4,Prop.2.6]{Budney-Gabai}.
    % They compute? $W_2(\alpha_{t^1,k})=-(t^{k+1}-2t^k+t^{k-1})$...
\end{remark}
}

For the next example, observe that $\lambdabar(a,g)$ vanishes if $X=\D^k\times Y$ for $1\leq k\leq d$.
\begin{cor}
    Given a $(d-k)$-dimensional manifold $Y$ for $k\geq1$, let $\mathcal{S}(Y)$ denote a collection of $\Z[\pi_1Y]$-generators for $\pi_{d-1}Y$. Then
    \[
        \faktor{\Z[\pi_1Y\sm1]}{\langle g\dax(a)\ol{g}\mid a\in\mathcal{S}(Y),g\in\pi_1Y\rangle}\hra \pi_{d-3}\big(\Arcs{\D^k\times Y},1\big) \sra \pi_{d-2}Y.
    \]
\end{cor}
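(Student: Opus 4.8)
The plan is to feed $X=\D^k\times Y$ (with $d\geq4$, as throughout the section) into the group extension~\eqref{eq:thm1} of Theorem~\ref{thm:KT-arcs} and to identify the subgroup $\dax_\u(\pi_{d-1}X)$ by means of Theorem~\ref{thm:dax-compute}. Since $k\geq1$ we have $\partial X\neq\emptyset$, so Theorem~\ref{thm:KT-arcs} applies; moreover the projection $\D^k\times Y\to Y$ is a homotopy equivalence, so $\pi_1X\cong\pi_1Y$ and $\pi_nX\cong\pi_nY$ for $n\geq2$, the inverse of the latter being induced by any slice inclusion $\{p\}\times Y\hra X$. Taking the basepoint $1$ to be a boundary-parallel arc, \eqref{eq:thm1} reads
\[
    \faktor{\Z[\pi_1Y\sm1]}{\dax_1(\pi_{d-1}Y)}\hra\pi_{d-3}\big(\Arcs{\D^k\times Y},1\big)\sra\pi_{d-2}Y,
\]
so the whole task reduces to proving $\dax_1(\pi_{d-1}Y)=R$, where $R\coloneqq\langle\,g\,\dax(a)\,\ol{g}\mid a\in\mathcal{S}(Y),\ g\in\pi_1Y\,\rangle$ is the subgroup of $\Z[\pi_1Y\sm1]$ appearing in the corollary.

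I would first reduce this to the equivariance formula $\dax(ga)=g\,\dax(a)\,\ol{g}$. As $1$ is boundary-parallel, say with both endpoints close together in $\partial X$, its class $\bm{1}\in\pi_1(X,\partial X)$ is trivial: a sphere representing $a\in\pi_{d-1}X\cong\pi_{d-1}Y$ can be pushed into a slice $\{p\}\times Y$ missing a representative of $\bm{1}$, so $\lambdabar(a,\bm{1})=0$ and Theorem~\ref{thm:dax-compute}\ref{eq:A} gives $\dax_1=\dax$. For the same reason $\lambdabar(a,g)=0$ for all $g\in\pi_1X=\pi_1Y$ -- this is the observation recorded just before the corollary, a representative of the image of $g$ in $\pi_1(X,\partial X)$ being supported in a slice $\{p'\}\times Y$ with $p'\in\partial\D^k$, disjoint from the one carrying the sphere. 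Feeding $\lambdabar(ga,g)=0$ together with $\lambdabar(g,ga)=(-1)^{d-1}\ol{\lambdabar(ga,g)}=0$ into Theorem~\ref{thm:dax-compute}\ref{eq:B} collapses that formula to $\dax(ga)=g\,\dax(a)\,\ol{g}$.

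It then remains to run the algebra. Since $\dax=\dax_1\colon\pi_{d-1}Y\to\Z[\pi_1Y\sm1]$ is a group homomorphism and $\mathcal{S}(Y)$ generates $\pi_{d-1}Y$ as a $\Z[\pi_1Y]$-module, every class is a finite sum $\sum_i n_i\,g_ia_i$ with $a_i\in\mathcal{S}(Y)$, $g_i\in\pi_1Y$, $n_i\in\Z$, whence $\dax_1\big(\sum_i n_i\,g_ia_i\big)=\sum_i n_i\,g_i\,\dax(a_i)\,\ol{g_i}\in R$; conversely each generator $g\,\dax(a)\,\ol{g}=\dax(ga)$ of $R$ lies in $\dax_1(\pi_{d-1}Y)$, and since conjugation by $g$ fixes $1\in\pi_1Y$ these elements do lie in $\Z[\pi_1Y\sm1]$. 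Hence $\dax_1(\pi_{d-1}Y)=R$, and inserting this into the displayed extension proves the corollary. The only step that is not formal bookkeeping with Theorems~\ref{thm:KT-arcs} and~\ref{thm:dax-compute} is the geometric vanishing of $\lambda$ on the product $\D^k\times Y$, which is exactly the observation stated just before the corollary; I would prove it by the slice-pushing argument above.
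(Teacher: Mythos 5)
Your proposal is correct and follows exactly the route the paper intends: the corollary is a consequence of Theorems~\ref{thm:KT-arcs} and~\ref{thm:dax-compute} together with the observation (stated just before the corollary) that $\lambdabar(a,g)$ vanishes for $X=\D^k\times Y$, and you supply the slice-separation argument justifying that vanishing, feed it into formulas~\ref{eq:A} and~\ref{eq:B} to get $\dax_1=\dax$ and $\dax(ga)=g\dax(a)\ol g$, and finish with the $\Z[\pi_1Y]$-module bookkeeping. No gaps; this is essentially the paper's proof with the details written out.
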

\begin{remark}\label{rem:KTcor}
    For a 4-manifold $X=\D^k\times Y$ the cases $k=2,3,4$ are {\color{black}completely computed} by the discussion so far. For $k=1$ we can say more thanks to \cite[Thm.B and Cor.1.6]{KT-LBT}.
    If a 3-manifold $Y$ has $H_3\wt{Y}=0$, then every $a\in\pi_3X$ is the composite of the Hopf map $\S^3\to\S^2$ and a map $S\colon\S^2\to X$, and we have $\dax(a)=\lambdabar(S,S)$. This vanishes since any $S$ can be pushed off itself in the $\D^1$-direction.
    Therefore, for $X=\D^1\times Y$ we have $\dax=0$ and the exact sequence as in Corollary~\ref{cor:d-1-trivial}\ref{cor:arcs-d-1-conn},
    unless $Y$ is closed with finite fundamental group. In that case, we have $\pi_3X\cong H_3\wt{Y}\cong \Z$ generated by the map $a_Y\colon\S^3=\wt{Y}\to\{pt\}\times Y\subseteq X$, and computing $\dax(a_Y)$ remains an interesting problem!
\end{remark}

\subsection{Arcs in dimension three}\label{intro-sec:3d}
For $d=3$ we consider the induced map on sets of components $\pi_0\incl_X\colon\pi_0\Arcs{X}\to\pi_0\ImArcs{X}\cong\pi_1X$, so $\ker(\pi_{d-3}(\incl_X,\u))$ translates into the set
\[
    \KK(X,\bm{\u})\coloneqq (\pi_0\incl_X)^{-1}(\bm{\u})\subseteq\pi_0\Arcs{X},
\]
of those isotopy classes of (long) knots $K\colon\D^1\hra X$ in the $3$-manifold $X$ homotopic to the fixed knot $\u\colon\D^1\hra X$. Counting double point loops during a generic homotopy from $K$ to $\u$ still gives
\begin{equation}\label{eq:Dax-dim3}
    \Dax{\color{black}=\Dax_\u}\colon\KK(X,\bm{\u})\sra \faktor{\Z[\pi_1X]}{\dax_\u(\pi_2X)}
\end{equation}
which is now only a map of sets, see~(\ref{eq:Dax-def}).
However, it is still surjective (perform crossing changes along given group elements) and the related map $\dax_\u\colon\pi_2X\to\Z[\pi_1X]$, which applies $\Dax_\u$ to self-homotopies, is a homomorphism of abelian groups (see Lemma~\ref{lem:Dax-dim3}). Additionally, in this 3-dimensional setting Corollary~\ref{cor:dax-compute} shows particularly useful thanks to the following classical result.
\begin{theorem}[Corollary of the Sphere Theorem, see {\cite[Prop.~3.12]{Hatcher-3}}]\label{thm:cor-sphere-thm}
    In a compact orientable 3-manifold $X$ there exists a finite collection $\mathcal{S}(X)$ of disjointly embedded $2$-spheres generating $\pi_2X$ as a $\Z[\pi_1X]$-module. 
    Each sphere in the collection is either a connected sum sphere in the prime decomposition of $X$, or coming from a $\S^1\times\S^2$ factor, or it is parallel to a sphere in $\partial X$. 
\end{theorem}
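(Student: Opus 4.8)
The plan is to combine three classical ingredients: the Kneser--Milnor prime decomposition, Kneser's finiteness theorem, and the Sphere Theorem (in the form: an orientable $3$-manifold containing no essential embedded $2$-sphere has vanishing $\pi_2$). We may assume $X$ is connected.

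First, fix a prime decomposition $X\cong P_1\#\cdots\#P_m$, realised by a finite disjoint system of separating embedded $2$-spheres $\Sigma_1,\dots,\Sigma_{m-1}\subseteq X$ (finiteness of $m$ being Kneser's theorem). These are the connected sum spheres, and I put them into $\mathcal{S}(X)$. Since $X$ is orientable, each factor $P_i$ is orientable and prime, hence either $\cong\S^1\times\S^2$ or irreducible; and, the connected sum being formed along interior balls, $\partial P_i$ is a union of components of $\partial X$. If $P_i\cong\S^1\times\S^2$, then $\pi_2P_i\cong\Z$ is cyclic, hence in particular generated over $\Z[\pi_1P_i]$ by the sphere $\{pt\}\times\S^2$, which I add to $\mathcal{S}(X)$. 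If $P_i$ is irreducible, cap off every $\S^2$-component of $\partial P_i$ with a ball to obtain $\widehat{P_i}$; an innermost-circle argument shows $\widehat{P_i}$ is again irreducible, so it contains no essential embedded $2$-sphere, and the Sphere Theorem gives $\pi_2\widehat{P_i}=0$, i.e.\ $H_2(\wt{\widehat{P_i}})=0$ (for which, when the universal cover is compact, one needs only Poincar\'e duality and not the Poincar\'e conjecture). Now $P_i$ is $\widehat{P_i}$ with finitely many open balls deleted, which does not change the fundamental group, so $\wt{P_i}=\wt{\widehat{P_i}}\sm(\text{preimages of the deleted balls})$, and the Mayer--Vietoris sequence of this decomposition shows that $\pi_2P_i\cong H_2(\wt{P_i})$ is generated over $\Z[\pi_1P_i]$ by the spheres parallel to the $\S^2$-components of $\partial P_i$; these I add to $\mathcal{S}(X)$.

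It remains to reassemble $\pi_2X$ from the pieces. Here one argues by induction on $m$, using the standard description of the universal cover of a connected sum as a tree of copies of the punctured $\wt{P_i}$ glued along lifts of the connect sum spheres; from this one reads off that $\pi_2X$ is generated as a $\Z[\pi_1X]$-module by the classes $[\Sigma_j]$ together with the images of the chosen $\Z[\pi_1P_i]$-generators of $\pi_2P_i$. Since a sphere parallel to an $\S^2$-component of $\partial P_i$ is parallel to a component of $\partial X$, and $\mathcal{S}(X)$ is finite by construction, this is the asserted collection.

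I expect the reassembly to be the main obstacle: one must make the ``tree of spaces'' picture of $\wt{X}$ precise enough to extract a $\Z[\pi_1X]$-module generating set --- in effect computing $\pi_2(A\#B)$ as a module over the (possibly large) group ring $\Z[\pi_1(A\#B)]$ --- and keep careful track of the bookkeeping between the $\S^2$-boundary components of the prime factors (which produce boundary-parallel spheres in $X$) and the connect sum spheres. The remaining steps are either a direct calculation ($\S^1\times\S^2$) or a routine innermost-disk / Mayer--Vietoris argument.
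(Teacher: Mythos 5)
The paper does not prove this result; it is quoted as a known theorem with a pointer to Hatcher's 3-manifold notes (Prop.~3.12), so there is no in-paper argument to compare against. Your outline is essentially the standard proof one finds there: prime decomposition, Sphere Theorem to kill $\pi_2$ of the irreducible factors, the generator of $\pi_2(\S^1\times\S^2)$, and a Bass--Serre / tree-of-spaces description of $\wt{X}$ to reassemble these into a $\Z[\pi_1X]$-module generating set. The structure is sound, and the Mayer--Vietoris induction you sketch for the reassembly does go through (the key point being that the copies of $\wt{P_i}^{\,\circ}$ in the tree form a single $\pi_1X$-orbit with stabiliser conjugate to $\pi_1P_i$, so $\Z[\pi_1P_i]$-generators of $\pi_2P_i$ induce $\Z[\pi_1X]$-generators, and the gluing spheres are in the $\pi_1X$-orbit of the $[\Sigma_j]$).

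Two small points worth tightening. First, the case ``$P_i$ irreducible with an $\S^2$-boundary component'' is degenerate: pushing such a boundary sphere inward produces a sphere which, by irreducibility, must bound a ball on its non-collar side, forcing $P_i\cong\D^3$, so $\pi_2P_i=0$ already and $\widehat{P_i}=\S^3$; the boundary-parallel spheres of $X$ are then accounted for by the connect-sum spheres of the $\D^3$ summands. The cleaner bookkeeping (and what Hatcher does) is to fill in the sphere components of $\partial X$ first, decompose the capped manifold, and then reinstate the deleted balls as an extra Mayer--Vietoris step at the end; this avoids $\D^3$ summands altogether. Second, the parenthetical about Poincar\'e duality replacing the Poincar\'e conjecture is a red herring here: the Sphere Theorem already yields $\pi_2=0$ for an orientable irreducible $3$-manifold directly, with no appeal to the topology of a compact universal cover. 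Finally, if you want the spheres literally disjointly embedded as the statement requires, just note that the $\{pt\}\times\S^2$ and boundary-parallel spheres can each be chosen inside their respective summand, away from the $\Sigma_j$.
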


\begin{cor}\label{cor:dim3}
    For a $3$-manifold $X$ with $\mathcal{S}(X)$ as in the theorem, there is a surjection of sets
    \begin{equation}\label{eq:pi0ev2}
       \begin{tikzcd}
            \Dax_\u\colon\KK(X,\bm{\u}) \ar[two heads]{r}{} & \faktor{\Z[\pi_1X\sm1]}{\langle\lambdabar(ga,\bm{\u})-\lambdabar(ga,g)+\ol{\lambdabar(ga,g)} \mid \,g\in\pi_1X,\,a\in \mathcal{S}(X) \,\rangle}
        \end{tikzcd}
    \end{equation}
    In particular, $\dax_\u(\pi_2X)$ depends only on the homotopy class $\bm{\u}$ (but $\Dax_\u$ could depend on~$\u$).
\end{cor}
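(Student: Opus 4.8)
The plan is to combine Theorem~\ref{thm:cor-sphere-thm}, the fact that $\dax_\u$ is a homomorphism of abelian groups in dimension three (Lemma~\ref{lem:Dax-dim3}), and the closed formula of Corollary~\ref{cor:dax-compute}\ref{eq:cor5}, and then to feed the resulting description of $\dax_\u(\pi_2X)$ into the surjection \eqref{eq:Dax-dim3}. First I would note that by Theorem~\ref{thm:cor-sphere-thm} the finite set $\mathcal{S}(X)$ generates $\pi_2X$ as a $\Z[\pi_1X]$-module, so the translates $g\cdot a$ with $g\in\pi_1X$ and $a\in\mathcal{S}(X)$ generate $\pi_2X$ as an abelian group. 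Since $\dax_\u$ is a homomorphism of abelian groups, its image $\dax_\u(\pi_2X)\subseteq\Z[\pi_1X\sm1]$ is therefore the subgroup generated by the elements $\dax_\u(g\cdot a)$, $g\in\pi_1X$, $a\in\mathcal{S}(X)$.

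Next I would evaluate those generators. Each $a\in\mathcal{S}(X)$ is by construction an embedded $2$-sphere, hence has an embedded representative, so Corollary~\ref{cor:dax-compute}\ref{eq:cor5} applies and gives $\dax_\u(ga)=\lambdabar(ga,\bm{\u})-\lambdabar(ga,g)+\lambdabar(g,ga)$. Because $d=3$ we have $(-1)^{d-1}=1$, so the definition $\lambda(\bm{k},a)=(-1)^{d-1}\ol{\lambda(a,\bm{k})}$ specialises to $\lambda(g,ga)=\ol{\lambda(ga,g)}$; since the involution fixes the coefficient of $1$, this descends to $\lambdabar(g,ga)=\ol{\lambdabar(ga,g)}$, and hence $\dax_\u(ga)=\lambdabar(ga,\bm{\u})-\lambdabar(ga,g)+\ol{\lambdabar(ga,g)}$. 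This identifies $\dax_\u(\pi_2X)$ with exactly the subgroup of $\Z[\pi_1X\sm1]$ spanned by these expressions over all $g\in\pi_1X$ and $a\in\mathcal{S}(X)$, and substituting it into the surjection \eqref{eq:Dax-dim3} yields the surjection of sets \eqref{eq:pi0ev2} (after the harmless normalisation of the coefficient of $1$ noted below).

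Finally, for the last sentence I would observe that the displayed relations involve $\u$ only through the term $\lambdabar(ga,\bm{\u})$, which depends on $\u$ only via its homotopy class $\bm{\u}\in\pi_1(X,\partial X)$, because $\lambda$ is a homotopy invariant; hence $\dax_\u(\pi_2X)$, and so the codomain of \eqref{eq:pi0ev2}, depends only on $\bm{\u}$. There is no reason for $\Dax$ itself to share this property, since in dimension three $\pi_0\incl_X$ need not be injective and the source $\KK(X,\bm{\u})$ can remember more than $\bm{\u}$. The whole argument is essentially a bookkeeping assembly of results already in hand; the only point that needs a little care is matching the quotient of $\Z[\pi_1X]$ in \eqref{eq:Dax-dim3} with the quotient of $\Z[\pi_1X\sm1]$ in \eqref{eq:pi0ev2}, but this is harmless because $\dax_\u(\pi_2X)\subseteq\Z[\pi_1X\sm1]$ already by \eqref{eq:dax-u-def}, and I expect this normalisation to be the only (very minor) obstacle.
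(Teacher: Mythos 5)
Your argument assembles exactly the same three ingredients the paper cites — Lemma~\ref{lem:Dax-dim3}, Corollary~\ref{cor:dax-compute}\ref{eq:cor5}, and Theorem~\ref{thm:cor-sphere-thm} — and fills in the bookkeeping (translates $ga$ generate $\pi_2X$ over $\Z$, the embedded-sphere hypothesis lets \ref{eq:cor5} apply, and $\lambdabar(g,ga)=\ol{\lambdabar(ga,g)}$ since $d=3$), so it matches the paper's one-line proof. The only quibble is the phrasing of the final parenthetical: the set $\KK(X,\bm{\u})=(\pi_0\incl_X)^{-1}(\bm{\u})$ itself depends only on $\bm{\u}$; what can depend on the isotopy class of $\u$ is the map $\Dax$, because it is normalised by $\Dax(\u)=0$ and is computed via homotopies to the chosen representative $\u$, and in dimension three a homotopy to a different isotopy representative $\u'\in\KK(X,\bm{\u})$ need not be through embeddings.
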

% In particular, if $\u$ is homotopic into the boundary of $X$ rel.\ endpoints then the image of $\dax_\u$ is invariant under the involution. See Theorem~\ref{conj}.
\begin{proof}
    This follows immediately from Lemma~\ref{lem:Dax-dim3}, Corollary~\ref{cor:dax-compute}\ref{eq:cor5} and Theorem~\ref{thm:cor-sphere-thm}.
\end{proof}

\begin{theorem}\label{thm:Vassiliev}
    The map $\Dax_\u$ is a \emph{universal} Vassiliev type $\leq1$ invariant {\color{black}of knotted arcs in $X$}.
\end{theorem}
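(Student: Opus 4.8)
The plan is to fix a homotopy class $\bm{\u}$ throughout and analyse the map $\Dax\colon\KK(X,\bm{\u})\sra\Z[\pi_1X]/\dax_\u(\pi_2X)$ of \eqref{eq:Dax-dim3}; the statement for all of $\pi_0\Arcs{X}$ then follows by combining over the classes $\bm{\u}$ together with the homotopy-class map $\pi_0\Arcs{X}\to\pi_1X$. Recall that a knot invariant $v$ with values in an abelian group is of \emph{Vassiliev type $\leq 1$} exactly when its skein extension $v(K_\times)=v(K_+)-v(K_-)$ to singular knots vanishes on every $2$-singular knot, and that being \emph{universal} means that every type $\leq 1$ invariant factors through $\Dax$ by a unique homomorphism. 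So I would prove two things: that $\Dax$ is itself of type $\leq 1$, and that any type $\leq 1$ invariant factors through it.

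First I would check that $\Dax$ is of type $\leq 1$. By construction $\Dax(K)$ is the signed count of double-point loops along a generic homotopy from $K$ to $\u$, and such counts are additive under concatenation of homotopies (Lemma~\ref{lem:Dax-dim3}). A single crossing change is a homotopy through one transverse double point, contributing $\pm g\in\Z[\pi_1X]/\dax_\u(\pi_2X)$ with $g$ the double-point loop and the sign its local sign; concatenating with a homotopy from $K_-$ to $\u$ then gives $\Dax(K_+)-\Dax(K_-)=\pm g$, a quantity determined by the local picture at the double point alone. For a knot with two transverse double points, the loop and the sign at one of them are unaffected when the other is resolved — a crossing change is a homotopy and so does not change the $\pi_1X$-class of the loop running through it — so the alternating sum of $\Dax$ over the $2\times2$ resolutions vanishes.

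For universality I would use the standard Vassiliev filtration $\mathcal F_0=\Z[\KK(X,\bm{\u})]\supseteq\mathcal F_1\supseteq\mathcal F_2$, where $\mathcal F_k$ is generated by the $k$-singular knots (each resolved by the alternating sum of its $2^k$ resolutions), so that type $\leq 1$ invariants are precisely the homomorphisms $\mathcal F_0\to A$ vanishing on $\mathcal F_2$; it then suffices to identify $\mathcal F_0/\mathcal F_2$. Since any two homotopic knots in a $3$-manifold differ by finitely many crossing changes — a generic path in $\Map_\partial(\D^1,X)$ is an embedding away from isolated times carrying a single transverse double point — one has $\mathcal F_0/\mathcal F_1\cong\Z$, generated by $[\u]$. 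The crucial step is the isomorphism $\mathcal F_1/\mathcal F_2\cong\Z[\pi_1X]/\dax_\u(\pi_2X)$ sending a $1$-singular knot to its double-point loop: it is onto because one produces a double point with any prescribed loop by a finger move of $\u$ around a representing loop; it is well defined because the loop-class is unchanged under crossing changes away from the double point, which is the $2$-singular relation; and the relations $\dax_\u(\pi_2X)$ appear because self-homotopies of $\u$ — which up to homotopy form $\pi_2X$ — realise exactly these, their total signed double-point count being $\dax_\u$ and their class in $\mathcal F_0$ being $[\u]-[\u]=0$ (this is essentially the content of Lemma~\ref{lem:Dax-dim3} and Corollary~\ref{cor:dim3}). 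Since $[\u]$ splits off the $\Z$ in $\mathcal F_0/\mathcal F_1$, this gives $\mathcal F_0/\mathcal F_2\cong\Z\langle[\u]\rangle\oplus\big(\Z[\pi_1X]/\dax_\u(\pi_2X)\big)$, and telescoping a homotopy between $\u$ and $K$ identifies the composite $\KK(X,\bm{\u})\hra\mathcal F_0\twoheadrightarrow\mathcal F_0/\mathcal F_2$ with $K\mapsto(1,\Dax(K))$. Hence any type $\leq 1$ invariant $v$ satisfies $v(K)=v(\u)+\phi(\Dax(K))$, where $\phi$ is the restriction of $v$ to the second summand, and $\phi$ is unique since $\Dax$ is onto; this is universality (constants being accounted for by the normalisation $v(\u)$).

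The hard part will be the isomorphism $\mathcal F_1/\mathcal F_2\cong\Z[\pi_1X]/\dax_\u(\pi_2X)$, and specifically its \emph{injectivity}: that the double-point loop is a complete invariant of a $1$-singular knot modulo $\mathcal F_2$. I would reduce this to the geometric claim that any two $1$-singular knots in a fixed homotopy class with the same double-point loop can be joined through $1$-singular knots, allowing isolated passages through $2$-singular knots — such a passage changes the derivative $v'$ of a type $\leq 1$ invariant (given on a $1$-singular knot $\kappa$ by $v(\kappa_+)-v(\kappa_-)$) by the value of $v$ on a $2$-singular knot, hence by $0$, so this does suffice. Proving that connectivity statement, namely dragging the double point around while controlling its loop class in the presence of $\pi_2X$, together with the sign- and whisker-bookkeeping that makes the double-point loop an honest element of $\pi_1X$ and $\Dax$ literally additive over concatenated homotopies, is the technical core of the argument, and is what Lemma~\ref{lem:Dax-dim3} is meant to supply.
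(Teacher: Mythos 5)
Your approach differs from the paper's in a meaningful way: you set up the Vassiliev filtration $\mathcal F_0\supseteq\mathcal F_1\supseteq\mathcal F_2$ and try to compute $\mathcal F_0/\mathcal F_2$ outright, whereas the paper never introduces the filtration. Instead it proceeds directly: for each type $\leq 1$ invariant $v$ it \emph{constructs} the factorizing homomorphism $w_v$ by setting $w_v(g)\coloneqq v(res(J_g))$ for any $1$-singular $J_g$ with double-point loop $g$, shows $v(K)-v(\u)=w_v\circ\Dax(K)$ by telescoping $v$ along a homotopy to $\u$, and checks that $w_v$ kills $\dax_\u(\pi_2X)$ simply because two homotopies from the same $K$ to $\u$ give the same value $v(K)-v(\u)$. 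That is shorter: you don't have to identify $\mathcal F_1/\mathcal F_2$ as a group — you only have to know that every type $\leq 1$ $v$ factors through $\Dax$, which the telescoping formula gives once $w_v$ is well-defined.

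The genuine problem with your write-up is the last paragraph. You correctly isolate the crux — that the double-point loop of a $1$-singular arc $J$ should determine $v(res(J))$ for type $\leq 1$ $v$ (equivalently, injectivity of the loop map on $\mathcal F_1/\mathcal F_2$) — but then assert this is ``what Lemma~\ref{lem:Dax-dim3} is meant to supply.'' It isn't. Lemma~\ref{lem:Dax-dim3} only says that $\Dax\circ\delta_{\incl_X}$ is a group homomorphism, i.e.\ that the Dax count is additive under concatenation of self-homotopies of $\u$; this is the ingredient one needs to make $\dax_\u$ a homomorphism, and it gives neither the connectivity claim you invoke nor the injectivity. In the paper the role this step plays is absorbed into the \emph{definition} of type $\leq1$ (``$v(res(J))$ depends only on the [homotopy] class of $J$''), so there is nothing left to prove there beyond the telescoping computation; by contrast your route makes that step an explicit isomorphism claim and then leaves it unproved and wrongly sourced. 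Until you actually establish, independently of Lemma~\ref{lem:Dax-dim3}, that $1$-singular arcs in $\bm{\u}$ with the same double-point loop are equivalent modulo $\mathcal F_2$ (or switch to the paper's more economical bookkeeping which avoids needing the full $\mathcal F_1/\mathcal F_2$ computation), the proposal has a gap at precisely the place you flag as its technical core.
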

\begin{proof}
An invariant $v\colon\KK(X,\bm{\u})\to A$, with values in an abelian group $A$, is of type $\leq1$ if and only if for an arc $J$ with a single double point $v(res(J))\coloneqq v(J^+)-v(J^-)$ depends only on the isotopy class of $J$, where $J_{\pm}$ are two knots obtained by resolving the double point. In particular, this is true for $v=\Dax_\u$ since $\Dax_\u(res(J))$ is the homotopy class of the double point loop.

To show that $\Dax_\u$ is universal, we will find for any $v$ as above a homomorphism $w_v\colon\im(\Dax_\u)\to A$ such that $v(K)=v(\u)+w_v\circ\Dax_\u(K)$ for all $K\in\KK(X,\bm{\u})$. 
We simply define $w_v$ by linear extension, $w_v(g)\coloneqq v(res(J_g))$ for an immersed arc $J_g$ with a single double point and loop $g=\Dax_\u(J_g^+)$. A homotopy from $K$ to $\u$ is a sequence of crossing changes, so $K-\u=\sum_ires(J_i)$, where $J_i$ is the immersed arc occurring during the $i$-th crossing change. Thus, $v(K)-v(\u)=\sum_iv(res(J_i))=w_v\circ\Dax_\u(K)$. To see that $w_v$ vanishes on relations $R_1-R_2=0$ defining the target of $\Dax_\u$, {\color{black}i.e.\ $R_1-R_2\in\dax_\u(\pi_2X)$}, recall that each such relation arises from having two different homotopies $h_i$ from $K$ to $\u$, with $\Dax_\u(h_i)=R_i$, so $w_v(R_1)=w_v\circ\Dax_\u(K)=w_v(R_2)$.
\end{proof}
\begin{remark}\label{rem:GW-d3}
    We saw in Remark~\ref{rem:GW} that $\Dax$ agrees with the second Goodwillie--Weiss knot invariant $\pi_0\ev_{2}$ (and $\ev_1=\incl_X$). In fact, $\pi_0\ev_{n}$ is invariant under the $(n-1)$-equivalence relation of knots due to Gusarov and Habiro \cite{K-thesis-paper}. For $M=\D^3$ this is equivalent to being of Vassiliev type $\leq n-1$, but in general 3-manifolds the analogous connection is an open problem for $n\geq3$.
    % For example, for $X=\D^3$ the target in \eqref{eq:pi0ev2} is trivial, and this is compatible with the fact that there are no nontrivial type $\leq 1$ invariants of classical knots.
\end{remark}

Finally, we study the isotopy invariant $\Dax_\u$ modulo the relation of concordance of knotted arcs.
\begin{defn}
    Two knots $K_0,K_1\in\Arcs{X}$ are \emph{concordant} if there exists a neat embedding $Q\colon \D^1\times[0,1]\hra X\times[0,1]$ with $Q|_{\D^1\times\{i\}}=K_i$ and $Q(\partial \D^1\times\{t\})=\u(\partial \D^1)\times\{t\}\subseteq\partial X\times\{t\}$.
    Let $\mathcal{C}(X,\bm{\u})$ be the set of concordance classes of knotted arcs in $X$ which are \emph{homotopic} to~$\u$.
\end{defn}
    The next theorem says that for $\u\in\Emb_\partial(\D^1,X)$ homotopic into $\partial X$ rel.\ endpoints, $\Dax_\u$ reduces to Schneiderman's \cite{Schneiderman-algebraic} concordance invariant
    \[
    \mu_2\colon\mathcal{C}(X,\bm{\u})\to\faktor{\Z[\pi_1X\sm1]}{\langle \ol{g}-g\mid g\in\pi_1X\rangle}.
    \]
    For a concordance class $[K]\in\mathcal{C}(X,\bm{\u})$ this is defined as Wall's self-intersection invariant $\mu_2(H)$ of any \emph{immersed} concordance $H\colon \D^1\times[0,1]\imra X\times[0,1]$ from $K$ to $\u$, e.g.\ take for $H$ the trace of any homotopy $h$ from $K$ to $\u$. The count of double point loops in $H$ is well defined only modulo $\ol{g}-g$, as there is no preference between the two intersecting sheets of $H$. For a proof that $\mu_2$ is well defined see \cite[Thm.1]{Schneiderman-algebraic}, where the analogous invariant was defined for \emph{nullmotopic circles}.
    
\begin{theorem}\label{thm:concordance}
    For a compact orientable $3$-manifold $X$, \emph{if $\u\in\Emb_\partial(\D^1,X)$ is homotopic into $\partial X$ rel.\ endpoints}, then there is a commutative diagram of sets
    \[\begin{tikzcd}[column sep=large]
        \KK(X,\bm{\u}) \ar[two heads]{d}{\Dax_\u}\ar[two heads]{r} 
        & 
        \mathcal{C}(X,\bm{\u}) \ar[two heads]{d}{\mu_2}
        \\
        \faktor{\Z[\pi_1X\sm1]}{\langle \ol{\lambdabar(ga,g)}- \lambdabar(ga,g)\mid g\in\pi_1X,a\in \mathcal{S}\rangle}\ar[two heads]{r}{q} 
        &
        \faktor{\Z[\pi_1X\sm1]}{\langle \ol{g}-g\mid g\in\pi_1X\rangle}
    \end{tikzcd}
        \]
\end{theorem}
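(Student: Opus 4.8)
The plan is to construct the commutative diagram in two stages: first exhibit the horizontal maps, then verify commutativity. The top horizontal map $\KK(X,\bm{\u})\to\mathcal{C}(X,\bm{\u})$ is simply the quotient map sending an isotopy class to its concordance class — this is well defined since isotopic knots are concordant, and it is surjective by definition of $\mathcal{C}(X,\bm{\u})$ as concordance classes of knots \emph{homotopic} to $\u$ (every such concordance class contains a knot, hence an isotopy class mapping to it). The bottom horizontal map $q$ is the evident further quotient: its target identifies $\ol g-g$, and we must check that each defining relator $\ol{\lambdabar(ga,g)}-\lambdabar(ga,g)$ of the source already lies in $\langle \ol h - h\mid h\in\pi_1X\rangle$. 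This is immediate once we expand $\lambdabar(ga,g)=\sum_j n_j h_j$ as a linear combination of group elements: then $\ol{\lambdabar(ga,g)}-\lambdabar(ga,g)=\sum_j n_j(\ol{h_j}-h_j)$, a sum of the allowed relators. So $q$ is a well-defined surjection of abelian groups.

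Next I would pin down that $\Dax$ lands in the stated quotient. By Corollary~\ref{cor:dim3}, $\Dax$ surjects onto $\Z[\pi_1X\sm1]$ modulo $\langle\lambdabar(ga,\bm{\u})-\lambdabar(ga,g)+\ol{\lambdabar(ga,g)}\rangle$. Here we use the extra hypothesis that $\u$ is homotopic into $\partial X$ rel.\ endpoints: this forces $\bm{\u}$, viewed in $\pi_1(X,\partial X)$, to be carried by the boundary, and I expect that the equivariant intersection pairing term $\lambdabar(ga,\bm{\u})$ then vanishes — any representative of $a\in\pi_2X$ can be made disjoint from a boundary-parallel arc (push $\u$ off the interior). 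With $\lambdabar(ga,\bm{\u})=0$ the relators simplify to $\ol{\lambdabar(ga,g)}-\lambdabar(ga,g)$, which is exactly the denominator written in the bottom-left corner of the diagram. So $\Dax\colon\KK(X,\bm{\u})\twoheadrightarrow \Z[\pi_1X\sm1]/\langle\ol{\lambdabar(ga,g)}-\lambdabar(ga,g)\rangle$, as needed, and it remains to identify the left vertical arrow's target correctly and check surjectivity, which is inherited from Corollary~\ref{cor:dim3}.

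Then I would establish commutativity, i.e.\ that $\mu_2\circ(\text{quotient})=q\circ\Dax$ as maps $\KK(X,\bm{\u})\to\Z[\pi_1X\sm1]/\langle\ol g-g\rangle$. Given a knot $K$ homotopic to $\u$, pick a generic homotopy $h$ from $K$ to $\u$ and let $H$ be its trace, an immersed concordance. On one hand $\Dax(K)$ is, by definition \eqref{eq:Dax-def}, the signed count of double point loops occurring in $h$; on the other hand $\mu_2([K])=\mu_2(H)$ is Wall's self-intersection count of $H$, and the double points of $H$ are precisely the double-point events of the homotopy $h$, with matching group elements (a double point loop of $h$ at time $t$ becomes a double point of $H$ at level $t$ with the same associated element of $\pi_1X$). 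Hence $\mu_2(H)$ equals the image of $\Dax(K)$ under the quotient $\Z[\pi_1X\sm1]\to\Z[\pi_1X\sm1]/\langle\ol g-g\rangle$, which is exactly $q\circ\Dax(K)$. The only care needed is the sign/side bookkeeping: Wall's $\mu_2$ is defined only modulo $\ol g-g$ because there is no ordering of the two sheets of $H$, whereas in $\Dax$ the two sheets (the ``old'' and ``new'' strands of the homotopy) are ordered; passing to the quotient by $\ol g - g$ kills exactly this ambiguity, so the equality holds on the nose in the target.

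The main obstacle I anticipate is the vanishing claim $\lambdabar(ga,\bm{\u})=0$ under the hypothesis that $\u$ is homotopic into $\partial X$ rel.\ endpoints — making this rigorous requires knowing that the equivariant intersection pairing only depends on the relevant homotopy classes and behaves well with respect to pushing an arc into the boundary (a property one would extract from Lemma~\ref{lem:lambda}), and being careful that ``homotopic into $\partial X$'' is enough rather than ``isotopic''. Everything else — surjectivity of the three displayed quotient maps, the relator bookkeeping for $q$, and the identification of $\mu_2$ of the trace with the $\Dax$ count modulo $\ol g - g$ — is formal once the maps are set up, with the sign conventions being the only place routine care is needed.
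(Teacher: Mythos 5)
Your proposal is correct and follows the same route as the paper: the heart of the argument is exactly the observation that the trace of a generic homotopy $h\colon K\simeq\u$ is an immersed concordance $H$ whose double points (with signs and group elements) coincide with the double-point events of $h$, so $\mu_2(H)$ equals $\Dax(K)$ once one forgets the sheet-ordering by passing to the quotient by $\ol g - g$; the paper states this in two sentences and refers to \cite[Thm.5.11]{KT-LBT} for details. The one point you flag as a possible gap — that $\lambdabar(ga,\bm{\u})=0$ when $\u$ is homotopic into $\partial X$ rel.\ endpoints — is in fact unproblematic: $\lambda$ depends only on the homotopy class $\bm{\u}\in\pi_1(X,\partial X)$, which by hypothesis has a representative lying in $\partial X$, and any class in $\pi_2X$ has a representative in the interior (push off the boundary using a collar), so the two are disjoint and the pairing vanishes. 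With that settled, your well-definedness check for $q$ and your identification of the bottom-left corner from Corollary~\ref{cor:dim3} complete the argument as the paper intends.
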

\begin{proof}[Proof of Theorem~\ref{thm:concordance}]
    The proof is the same as in \cite[Thm.5.11]{KT-LBT}:
    for $K\in\Arcs{X}$ and a homotopy $h$ from $K$ to $\u$, we defined the class $\Dax_\u(K)=\Dax_\u(h)\in\Z[\pi_1X]$ as the sum of signed double point loops occurring in $h$. This is equal to $\mu_2(H)$ for one choice of sheets: this is the sum of signed double point loops occurring in $h$ when seen as an immersed annulus $H$ in $X\times[0,1]$.
\end{proof}

In other words, $\lambda(ga,g)$ becomes zero if calculated in $X\times[0,1]$ since we can push $ga$ and $g$ off each other using the $[0,1]$-direction; however, in this setting we have to mod out $g-\ol{g}$ since in a concordance we have forgotten the foliations by arcs. As an example, in $X=N\sm\D^3$ the type $\leq1$ invariant $\Dax_\u$ simply agrees with $\mu_2$, since we have $\Phi\colon\S^2=\partial\D^3\hra X$ for which $\lambdabar(g\bm{\Phi},g)=g$ (see Corollary~\ref{cor:gPhi}). However, if $X$ is an irreducible 3-manifold then the target of $\Dax_\u$ is $\Z[\pi_1X\sm1]$, so it is an isotopy invariant which is in general strictly stronger than the concordance invariant $\mu_2$.

\subsection{Circles in dimension three}
Consider now the set $\KK(N,\bm{\s})\coloneqq(\pi_0\incl_N)^{-1}(\bm{\s})$ consisting of isotopy classes of knots $\S^1\hra N$ which are in the free homotopy class $\bm{\s}$ of a fixed knot $\s$. Denote by $\zeta(\bm{\s})\coloneqq\{b\in\pi_1N\mid b=\bm{\s}\,b\}$ the centralizer of $\bm{\s}\in\pi_1N=\pi_1(N,\s(e))$.
\begin{mainthm}\label{thm:circles-3d}
    There is a map $\Dax_\s$ from the set $\KK(N,\bm{\s})$ to the quotient of the group
    \[\faktor{\Z[\pi_1N\sm1]}{
        \big\langle
        \ol{g}-g\ol{\bm{\s}},\; 
        \lambdabar(ga,\bm{\s})-\lambdabar(ga,g)+\ol{\lambdabar(ga,g)}\mid g\in\pi_1N, a\in \mathcal{S}(N\sm\D^3)\;
         \,\big\rangle
        }
    \]
    by the following set-theoretic action of $\zeta(\bm{\s})$: let $b\in\zeta(\bm{\s})$ act by $r \mapsto b\cdot r\cdot b^{-1} + \Dax_\s(\delta^{whisk}_{\s}(b))$.
\end{mainthm}

\begin{remark}\label{rem:Vassiliev-circles}
    The proof of Theorem~\ref{thm:Vassiliev} together with Remark~\ref{rem:Vassiliev-circles2} implies that the invariant $\Dax_\s$ of from Theorem~\ref{thm:circles-3d} is a \emph{universal Vassiliev invariant of type $\leq1$ of knots in $N$}. For example, the type $\leq1$ invariants from \cite{Kirk-Livingston} factor through $\Dax$, but one can now construct many more!
\end{remark}

Recall from Theorem~\ref{thm:circles-main} that $\delta^{whisk}_{\s}(b)$ is the knot obtained by dragging a piece of $\s$ around a loop $\beta$ representing $b$. Thus, $\delta^{whisk}_{\s}(b)\simeq \beta\s\beta^{-1}\simeq\s$ and $\Dax(\delta^{whisk}_{\s}(b))$ counts double points in a based homotopy witnessing this, i.e.\ in a disk bounded by (an embedded version of) the commutator~$[\beta,\s]$.
% This is just $\Dax$ of the disk that closes up that torus, i.e.\ bounds an embedded commutator of $\s$ and $\beta$.

In particular, if $\s$ is freely nullhomotopic then all $\delta^{whisk}_{\s}(b)$ and $\lambdabar(ga,\bm{\s})$ vanish, so we are only modding out by $\ol{g}-g$ and conjugation. The same argument as in Theorem~\ref{thm:concordance} then shows that $\Dax_\s$ (itself!) agrees with Schneiderman's invariant $\mu_2$ for $\mathcal{C}(N,\bm{\s})$ {\color{black}in this case}. One could also prove that $\Dax_\s$ reduces to $\mu_2$ modulo concordance for any $\s$, but we omit this as the target of $\mu_2$ is in general somewhat complicated, see \cite{Schneiderman-algebraic}. We just note that $\Dax_\s(\delta^{whisk}_{\s}(\bm{\s}^n))=0$ for all $n$.

\begin{remark}
    Our set of relations should be compared to Schneiderman's generating set of relations $\Phi(\s)$ from \cite[Sec.4.7]{Schneiderman-algebraic}, and our action of the centralizer $\zeta(\bm{\s})$ with his in \cite[Sec.4.7.2]{Schneiderman-algebraic}. By \cite[Prop.5.4.1]{Schneiderman-algebraic} the set $\Phi(\s)$ has a part coming from spheres (corresponding to our relations coming from $\mathcal{S}(X)$), and a ``toroidal'' part (note that if $b\in\zeta(\bm{\s})$ then there is an immersed torus $\S^1\times\S^1\imra N$ with $\s=\S^1\times pt$). As observed there, it is only this latter part that is sensitive to the isotopy class of $\s$, while the rest depends only on its homotopy class $\bm{\s}$. This is also the case in our relations, where only $\Dax_\s(\delta^{whisk}_{\s}(b))$ can depend on $\s$.
\end{remark}

\subsection{Open problems}

There are some natural questions that to our knowledge remain open.
\begin{question}\label{question1}
    Does the image of $\dax_\u$ depend only on the homotopy type of a manifold $X$?
\end{question}
\begin{question}\label{question2}
    What is the isomorphism class of the central extension \eqref{eq:thm1}? Does it depend only on the homotopy type of a 4-manifold $X$?
\end{question}
We saw {\color{black}in Corollary~\ref{cor:dim3}} that the answer to the first question is affirmative for $d=3$. However, we suspect that there is an example of a pair of homotopy equivalent nondiffeomorphic 4-manifolds $X,X'$ for which the extensions~\eqref{eq:thm1} (or \eqref{eq:B-emb} for the circle case) are distinct. Namely, the Dax invariant is intimately related to the configuration space of two points in $X$ (i.e.\ the second stage of the embedding calculus tower, see Remark~\ref{rem:GW}), and a result of Longoni and Salvatore~\cite{Longoni-Salvatore} shows that those spaces can distinguish nondiffeomorphic (3-)manifolds. Let us point out, however, that by \cite{AS} these extensions agree for homeomorphic smooth manifolds.
% Namely, assuming that $X$ has a nonvanishing vector field, for the configuration space $\Conf_2X\coloneqq X\times X\sm diag$ the fibration sequence
% \[
%     X\sm\text{pt}\hra \Conf_2X\sra X
% \]
% splits after looping once: $\Omega\Conf_2X\simeq\Omega X\times\Omega(X\sm\text{pt})$. However, this is not always a homotopy equivalence of H-spaces, a counterexample of Longoni--Salvatore is the lens space $X=L_{7,2}$. On the other hand, for the homotopy equivalent manifold $X'=L_{7,1}$ this is the case, implying that $\Conf_2(L_{7,2})$ and $\Conf_2(L_{7,1})$ are not homotopy equivalent. This implies that $\Conf_2(\S^1\times L_{7,2})$ and $\Conf_2(\S^1\times L_{7,1})$ are not homotopy equivalent either, since...

%%%%%%%%%%%%%%%%%%%%%%%%%%%%%%%
\section{Knotted arcs}\label{sec:arcs}
{\color{black}Throughout this section $X$ is} a connected oriented smooth manifold of dimension $d\geq 3$ with nonempty boundary. We pick $x_-,x_+\in\partial X$ and let $x_-$ be the basepoint. For $\D^1=[-1,1]$ we consider spaces $\Arcs{X}$ and $\ImArcs{X}$ of embedded and immersed arcs as in \eqref{eq:def-arcs}, with boundary condition $\{x_-,x_+\}$.

Note that $\pi_0\Arcs{\D^3}\cong\pi_0\Emb(\S^1,\D^3)\cong\pi_0\Emb(\S^1,\S^3)$ is the set of isotopy classes of classical knots. On the other hand, if $d\geq4$ we simply have bijections $\pi_0\Arcs{X}\cong\pi_1X\cong\pi_0\ImArcs{X}$, so it is natural to try to determine the lowest homotopy group distinguishing embeddings from immersions. In other words, we are asking about the connectivity of the inclusion
\begin{equation}\label{eq:arcs-stable-range}
    \incl_X\colon\Arcs{X}\hra \ImArcs{X}.
\end{equation}
By general position the map $\incl_X$ is $(d-4)$-connected, i.e.\ an isomorphism in the so-called \emph{stable} range $[0,d-3)$. A range of nontrivial relative homotopy groups that comes after the stable range has been studied by Dax \cite{Dax}, following the work of Haefliger and Hatcher--Quinn, and is called the \emph{metastable} range $[d-3,2d-6)$. This was then further developed into a theory of embedding calculus by Goodwillie, Klein and Weiss \cite{GKW}, where the stable and metastable range only correspond to respectively the first and second stage of a whole tower of spaces{\color{black}. Remarkably, when $d\geq4$ this tower} completely describes the homotopy type of $\Arcs{X}$. 

In joint work \cite{KT-LBT} we used these ideas to describe explicitly the first potential difference in homotopy groups, {\color{black} obtaining Theorem~\ref{thm:KT-arcs}; we next provide the outline of its proof in order to fix notation.}  In Proposition~\ref{lem:Dax-dim3} we will also extend to the case $d=3$, not discussed in \cite{KT-LBT}, which was stated in \eqref{eq:Dax-dim3} above.
% That is, we computed the kernel of the surjection $\pi_{d-3}\Arcs{X}\sra\pi_{d-3}\ImArcs{X}$ for $d\geq4$, as was stated in  and will be explained now. 

Using the long exact sequence of the pair \eqref{eq:arcs-stable-range}, for $\u\in\Arcs{X}$ the kernel of the surjection $\pi_{d-3}(\incl_X,\u)\colon\pi_{d-3}(\Arcs{X},\u)\sra\pi_{d-3}(\ImArcs{X},\u)$ is given as the cokernel of
\begin{equation}\label{eq:deltaX}
    \delta_{\incl_X}\colon\pi_{d-2}(\ImArcs{X},\u)\to \pi_{d-2}^{Imm,Emb}(X,\u)\coloneqq\pi_{d-2}(\ImArcs{X},\Arcs{X},\u).
\end{equation}
Note that for $d=3$ the kernel of $\pi_0(\incl_X,\u)$ is interpreted as the preimage set $\KK(X,\bm{\u})\coloneqq(\pi_0\incl_X)^{-1}(\bm{\u})$, while the mentioned cokernel is the set of orbits of the action of the group $\pi_1(\ImArcs{X},\u)$ on the set $\pi_1^{Imm,Emb}(X)$ via $\delta_{\incl_X}$ (postconcatenate a loop to a path).

Therefore, we have two tasks: first to compute the relative homotopy group $\pi_{d-2}^{Imm,Emb}(X,\u)$ (which will be done using the Dax invariant) and then the connecting map $\delta_{\incl_X}$ and its image.

%%%%%%%%%%%%%%%%%%%%%%%%%%%%%%%
\subsection{The Dax invariant}\label{subsec:dax-invariant}
Let $\I=[0,1]$.
A class in the relative homotopy group $\pi_{d-2}^{Imm,Emb}(X,\u)$ is represented by a map $F\colon \I^{d-2}\to \ImArcs{X}$ which takes $\I^{d-3}\times\{0\}$ into the subspace $\Arcs{X}$, and has constant value $\u$ on $\partial\I^{d-3}\times\I\cup\I^{d-3}\times\{1\}$, the rest of the boundary of the cube $\I^{d-2}$. {\color{black}For the proof of the following lemma, see for example \cite[329]{Dax} (and \cite[331]{Dax} for the relative case).}

\begin{lemma}\label{lem:Dax}
    After an arbitrary small deformation of $F$ preserving the boundary conditions, we can assume that the map $\wt{F}\colon\; \I^{d-2}\times\D^1 \to \I^{d-2}\times X$, given by $(\vec{t},\theta)\mapsto (\,\vec{t},\,F(\vec{t})(\theta)\,)$
    is an immersion with only isolated transverse double points.
\end{lemma}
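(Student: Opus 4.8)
The plan is to deform $F$ in two stages: first making the immersion $\wt{F}$ an immersion (i.e.\ removing degeneracies of the differential), and then making its self-intersections transverse and isolated. Throughout, all deformations must fix the values of $F$ on the part of $\partial\I^{d-2}$ where it is constantly $\u$, and must keep $F(\I^{d-3}\times\{0\})$ inside $\Arcs{X}$; since $\u$ and the embeddings $F(\vec t)$ for $\vec t\in\I^{d-3}\times\{0\}$ are already embeddings (hence immersions), the map $\wt F$ is already an immersion near that part of the boundary, so the deformations can be taken to be supported away from it.

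First I would set up the relevant jet transversality statement. The map $\wt F\colon \I^{d-2}\times\D^1\to\I^{d-2}\times X$ has a fixed first factor, so being an immersion is equivalent to the vertical derivative $\partial_\theta F(\vec t)(\theta)\in T_{F(\vec t)(\theta)}X$ being nonzero for all $(\vec t,\theta)$; equivalently, the associated section of the appropriate jet bundle misses the ``zero'' stratum. The source $\I^{d-2}\times\D^1$ has dimension $d-1$, and the bad locus where $\partial_\theta F=0$ has codimension $d$ in the jet space, so a generic perturbation of $F$ (rel.\ the boundary conditions, using that $F$ already restricts to a map into immersed, in fact embedded, arcs on all of $\I^{d-3}\times\{0\}$ and on the constant part of $\partial\I^{d-2}$, where we need not perturb) makes $\wt F$ an immersion. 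This is a standard application of the Thom transversality theorem for jets, in the parametrised form, being careful that the perturbation is $C^\infty$-small and relative to the closed subset of $\I^{d-2}$ where the boundary conditions pin down $F$.

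Next, with $\wt F$ an immersion, I would make its self-intersections generic. Consider the double-point map on the (open) complement of the diagonal in the fibre product, $(\I^{d-2}\times\D^1)\times_{\I^{d-2}\times X}(\I^{d-2}\times\D^1)\setminus\Delta$; a self-intersection of $\wt F$ is a pair of distinct points with the same image. Because the first coordinate $\vec t$ is shared, a double point forces $\vec t_1=\vec t_2$ and $F(\vec t)(\theta_1)=F(\vec t)(\theta_2)$ with $\theta_1\neq\theta_2$: that is, the double-point set of $\wt F$ fibres over the double-point set of the family $F$ of arcs, one copy per parameter $\vec t$. The expected dimension of this self-intersection set is $2(d-1)-(d-1)-d\cdot 0$... more precisely, two sheets of a $(d-1)$-manifold immersed in a $(2d-3)$-manifold (namely $\I^{d-2}\times X$) meet in expected dimension $2(d-1)-(2d-3)=1$, i.e.\ double point \emph{curves}; but the constraint $\vec t_1=\vec t_2$ cuts this down by $d-2$, leaving expected dimension $1-(d-2)=3-d\leq 0$. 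So for $d\geq3$ the generic double-point set of $\wt F$ is $0$-dimensional, and a further generic $C^\infty$-small perturbation of $F$ (again rel.\ the boundary, and small enough to preserve the immersion property established in the previous step, and preserving that the arcs over $\I^{d-3}\times\{0\}$ stay embedded — achievable since that stratum contributes no double points to begin with) makes all these double points transverse, hence isolated. Finiteness then follows from compactness of the source together with the boundary condition forcing all double points into the interior.

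The main obstacle I anticipate is bookkeeping the boundary conditions through both transversality arguments: one must ensure the perturbations are supported in the region where $F$ is not already pinned down, and in particular that one never perturbs on $\partial\I^{d-3}\times\I\cup\I^{d-3}\times\{1\}$ (where $F\equiv\u$) nor destroys the property that $F$ maps $\I^{d-3}\times\{0\}$ into $\Arcs{X}$. Since embeddings form an open condition, a sufficiently small perturbation preserves the latter, so the relative transversality theorem applies with the closed ``pinned'' subset of $\I^{d-2}$; the only care needed is to check that $\wt F$ is \emph{already} immersed with transverse isolated (in fact, no) double points on a neighbourhood of that closed subset, which holds because embeddings have injective nondegenerate differentials. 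Granting this, both perturbation steps are routine applications of parametrised jet transversality, and composing the two small deformations yields the desired $F$ with $\wt F$ an immersion having only isolated transverse double points.
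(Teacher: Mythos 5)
The overall strategy (general position, careful about boundary conditions) is the right one and matches how one would prove such a lemma, but there are two substantive errors.

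First, your opening step is vacuous. The map $F$ takes values in $\ImArcs{X}$, the space of \emph{immersed} arcs, so each $F(\vec t)$ is already an immersion and $\partial_\theta F(\vec t)(\theta)\neq 0$ holds everywhere from the outset. By your own matrix computation, $\wt F$ is therefore already an immersion before any perturbation; the jet transversality argument for the immersion property is not needed (and is not what the word "immersion" in the lemma is asking you to arrange — you are asked to arrange the double points, not immersivity).

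Second, and more seriously, the dimension count for the double-point set is wrong, and the mistake is conceptual. You start from the unconstrained count $2(d-1)-(2d-3)=1$ for two sheets of a generic $(d-1)$-immersion in a $(2d-3)$-manifold, then ``impose'' $\vec t_1=\vec t_2$ as if it were a codimension-$(d-2)$ condition cutting that $1$-dimensional set down to dimension $3-d$. But $\vec t_1=\vec t_2$ is not a generic condition on that double-point set: since $\wt F$ commutes with the projection to $\I^{d-2}$, \emph{every} double point automatically satisfies $\vec t_1=\vec t_2$, so the constraint cuts down nothing, and the unconstrained count of $1$ was never the right starting point. The correct count is fiberwise: the double-point locus sits in $\{(\vec t,\theta_1,\theta_2):\theta_1<\theta_2\}$, a space of dimension $(d-2)+2=d$, cut out by the $d$ equations $F(\vec t)(\theta_1)=F(\vec t)(\theta_2)\in X$, hence expected dimension $0$, independent of $d$. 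Your answer $3-d$ is negative for $d>3$; if taken at face value it would say the generic double-point set is \emph{empty} for $d\geq 4$, which would make the Dax invariant identically zero and contradict the entire content of the paper. The conclusion you state (``$0$-dimensional, hence isolated'') happens to be correct, but it does not follow from your computation; fix the count by doing the transversality fiberwise over $\I^{d-2}$ rather than treating $\wt F$ as an arbitrary immersion and retrofitting the equivariance as a constraint.

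Your handling of the boundary conditions (no perturbation near the pinned face, and no double points contributed by the embedded arcs over $\I^{d-3}\times\{0\}$) is fine, and those are indeed the main bookkeeping points.
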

The double points of $\wt{F}$ must be of the form $(\vec{t}_i,x_i)\in\I^{d-2}\times X$, $i=1,\dots,N$, for some $\vec{t}_i\in \I^{d-2}$ and $x_i=F(\vec{t}_i)(\theta^-_i)=F(\vec{t}_i)(\theta^+_i)$, for some $\theta^-_i<\theta^+_i\in \D^1$. Define the double point loop at $x_i$ by
\begin{equation}\label{eq:dp-loop}
    g_{x_i}\coloneqq \big[F(\vec{t}_i)_{\leq\theta_i^-}\cdot F(\vec{t}_i)_{\leq\theta_i^+}^{-1}\big]\quad\in\pi_1X\coloneqq\pi_1(X,x_-),
\end{equation}
where for an arc $\alpha\colon\D^1\imra X$ and $\theta\in\D^1$ we use the notation $\alpha_{\leq\theta}\coloneqq\alpha|_{[-1,\theta]}$.
See Figure~\ref{fig:dax-loop}.
Moreover, let $\varepsilon_{x_i}\in\{-1,1\}$ be the local orientation of $\wt{F}$ at $x_i$, obtained by comparing orientations of tangent bundles to both sheets, namely the image of the derivatives $d\wt{F}|_{(\vec{t}_i,\theta^-_i)}\oplus d\wt{F}|_{(\vec{t}_i,\theta^+_i)}$ with the tangent space of $\I^{d-2}\times X$ at $(\vec{t}_i,x_i)$.

Following \cite{Dax,Gabai-disks} define for $d\geq3$ the \emph{Dax invariant} by
\begin{equation}\label{eq:Dax-def}
    \Dax\colon \pi_{d-2}^{Imm,Emb}(X,\u)\to\Z[\pi_1X],\quad  
    \Dax(\bm{F})\coloneqq\sum_{i=1}^N\varepsilon_{x_i}g_{x_i}\in\Z[\pi_1X].
\end{equation}
This is well defined and does not depend on the choice of a perturbation of the map $F$, but only on its homotopy class $\bm{F}$. For $d\geq4$ the left hand side is a group and $\Dax$ is clearly a group homomorphism since the corresponding maps $F$ get stacked in the $\I^{d-2}$-direction.
\begin{figure}[!htbp]
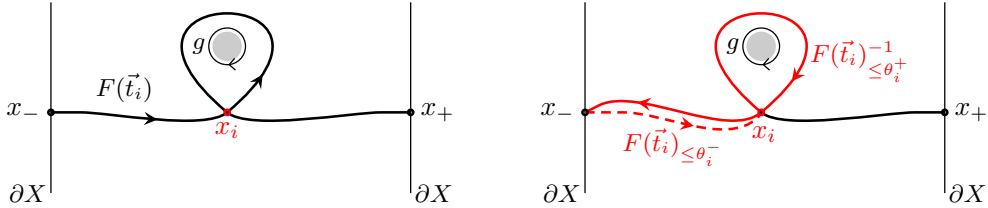

    \centering
    \includestandalone[mode=buildmissing,width=0.88\linewidth]{figures/fig-dax-loop}
    \caption{\emph{Left.} An immersed arc $F(\vec{t}_i)$ with a double point $x_i=F(\vec{t}_i)(\theta_i^-)=F(\vec{t}_i)(\theta_i^+)$. 
    \emph{Right.} The red double point loop $g_{x_i}$ is homotopic to $g$ (first follow the dashed arc, then the solid red arc).}
    \label{fig:dax-loop}
\end{figure}

Let us describe an explicit (partial) inverse 
\begin{equation}\label{eq:realmap-def}
    \realmap\colon\Z[\pi_1X]\to\pi_{d-2}^{Imm,Emb}(X,\u).
\end{equation}
The class $\realmap(g)$ is represented by the map $\realmap(g)\colon\I^{d-2}=\I^{d-3}\times\I\to\ImArcs{X}$ given as the $(d-3)$-parameter family of embedded arcs $\realmap(g)(\vec{t},0)\in\Arcs{X}$ for $\vec{t}\in\I^{d-3}$, obtained as the ``family finger move'' of $\u$ along $g$, together with paths through immersed arcs $\realmap(g)(\vec{t},t)$ from this family $\realmap(g)(\vec{t},0)$ to the constant family $\const_\u$ equal to $\u$, using the meridian ball as in Figure~\ref{fig:realmap}. In more detail, we pick a meridian sphere $\S^{d-2}_x$ for $\u$, and foliate it by arcs $\alpha_{\vec{t}}$ using its description as the suspension of a $(d-3)$-sphere. We fix $\mytheta_L<\mytheta_R\in\D^1$ and drag the interval $\u([\mytheta_L,\mytheta_R])$ around $g$, then connect sum it into the arc $\alpha_{\vec{t}}$. For the immersed arcs $\realmap(g)(\vec{t},t)$ we instead connect sum into an arc foliating the meridian ball that $\S^{d-2}_x$ bounds.

For $d\geq4$ we extend $\realmap$ to the group ring $\Z[\pi_1X]$ linearly. Namely, subdivide the $s\in\I$ factor into $N$ parts, and define $\realmap(\sum_{i=1}^Ng_i)$ by applying the above foliated finger move along $g_i$ for the corresponding values of $s$. For $d=4$ one needs to checks that this is independent of the order of indices $i$, so that $\realmap(\sum_{i=1}^Ng_i)$ is a well-defined class in $\pi_2^{Imm,Emb}(X,\u)$, which is not necessarily abelian; this follows by an Eckmann--Hilton argument, see \cite{KT-LBT}.

However, for $d=3$ we can only define a set-theoretic map
\begin{equation}\label{eq:realmap-dim3}
    \realmap\colon\Z[\pi_1X]\to\pi_1^{Imm,Emb}(X,\u)=\pi_1(\ImArcs{X},\Arcs{X},\u).
\end{equation}
Namely, for $r\in\Z[\pi_1X]$ we first write $r=\sum_{i=1}^Ng_i$ for \emph{some order} $i=1,2,\dots,N$. Subdivide $[\mytheta_L,\mytheta_R]=[\mytheta_L^1,\mytheta_R^1]\cup\dots\cup [\mytheta_L^N,\mytheta_R^N]$ and also pick some points $x_i\in\D^1$ so that $\mytheta_R<x_1<\dots<x_N$. Then define $\realmap(r)\colon(\I,\{0\},\{1\})\to(\ImArcs{X},\Arcs{X},\u)$ as the path which, as $s\in\I$ decreases, performs one by one finger move of $[\mytheta_L^i,\mytheta_R^i]$ along $g_i$ across the meridian disk to $\u$ at $x_i$.
\begin{remark}
    For every $d\geq3$ an Eckmann--Hilton argument also shows that $\realmap(\sum_{i=1}^Ng_i)$ is homotopic to the map which does the finger moves ``in parallel'', i.e.\ maps $\vec{t}\in \I^{d-3}$ to the \emph{concatenation of the respective arcs} for each subinterval $[\mytheta_L^i,\mytheta_R^i]$. This uses that the adjoint is defined on $\I^{d-3}\times\I\times\D^1$ and that finger moves can be chosen to be contained in disjoint $d$-balls in $X$.
\end{remark}
In the family $\realmap(g)$ there is only one immersed arc, and it has a unique double point, with the loop $+g$, see Figure~\ref{fig:realmap} {\color{black}for the loop and \cite[Thm.4.5]{KT-LBT} for the sign}. This implies that $\Dax\circ\realmap=\Id$,
so $\Dax$ is surjective for all $d\geq3$. For $d\geq4$ the results of Dax imply that  $\realmap\circ\Dax=\Id$ as well, so:
\begin{theorem}[\cite{Dax}, \cite{Gabai-disks}, \cite{KT-LBT}]\label{thm:dax}
    For $d\geq4$ the map $\Dax$ is an isomorphism of groups; for $d=3$ it is a surjection of sets.
\end{theorem}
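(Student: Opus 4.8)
The surjectivity claim has in fact already been settled for every $d\geq3$ in the discussion preceding the statement: the family $\realmap(g)$ contains a single immersed arc, with exactly one transverse double point, of sign $+1$ and associated loop $g$, so $\Dax(\realmap(g))=g$; since $\Dax$ is additive and $\realmap$ was defined linearly for $d\geq4$, this gives $\Dax\circ\realmap=\Id$. In particular the $d=3$ half of the statement is done, and for $d\geq4$, where $\Dax$ is moreover a group homomorphism, it remains only to prove \emph{injectivity}, that is, $\realmap\circ\Dax=\Id$.

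The plan for injectivity is to put a representative into a normal form by a parametrized Whitney trick. Given $\bm F\in\pi_{d-2}^{Imm,Emb}(X,\u)$, first use Lemma~\ref{lem:Dax} to arrange that $\wt F\colon\I^{d-2}\times\D^1\to\I^{d-2}\times X$ is a generic immersion with isolated transverse double points $(\vec t_i,x_i)$, carrying signs $\varepsilon_{x_i}$ and loops $g_{x_i}$. The relevant count is that $\wt F$ maps a $(d-1)$-manifold into a $(2d-2)$-manifold, so its double-point locus is $0$-dimensional and a generic $1$-parameter deformation of $F$ produces a $1$-dimensional such locus, along which the $\pi_1X$-labels and signs are constant; thus $\Dax$ is exactly the bordism invariant of a point-set labelled by $\pi_1X$ with signs, and $0$-dimensional framed bordism over the discrete set $\pi_1X$ is $\Z[\pi_1X]$. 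Concretely, if $\Dax(\bm F)=r$ then one groups the $x_i$ into the minimal collection prescribed by the reduced form of $r$, leaving the surplus double points organised into cancelling pairs, each with opposite signs and equal loops. For such a pair the Whitney circle is nullhomotopic in $X$ (by $g_{x_i}=g_{x_j}$) and the Whitney disk is correctly framed (by $\varepsilon_{x_i}=-\varepsilon_{x_j}$); since $4<2d-2$ and $d+1<2d-2$ for $d\geq4$, this disk may be taken embedded and disjoint from the rest of $\wt F$, and a parametrized Whitney move deletes the pair while keeping $F$ a family of immersed arcs with the prescribed behaviour on $\partial\I^{d-2}$. Once only the minimal double points remain, one recognises $\wt F$, up to a further homotopy rel the cube boundary, as the standard family $\realmap(r)$ produced by finger moves, which is precisely $\realmap(\Dax(\bm F))=\bm F$. (Alternatively, this can be phrased via convergence of the Goodwillie--Weiss tower in this range, identifying $\Dax$ with the map induced by $\ev_2$; cf.\ Remark~\ref{rem:GW}.)

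The main obstacle is the parametrized and \emph{relative} nature of this Whitney trick: one has to guide the Whitney disks away from the faces of $\I^{d-2}$ on which $F$ is pinned to be embedded or constant, and keep the whole deformation within immersed arcs (compatibly with the product structure of $\wt F$), so that it genuinely represents a homotopy in $\pi_{d-2}^{Imm,Emb}(X,\u)$. Equivalently, one must set up Dax's identification of this relative homotopy group with the bordism group of double-point loci and verify its exactness in the metastable range; this is carried out in \cite{Dax} (see also \cite{Gabai-disks}). The case $d=4$ sits at the very boundary of that range, where the uniqueness statement is the most delicate, and is handled in \cite{KT-LBT}; combined with the surjectivity and additivity of $\Dax$ recalled above, this completes the proof that $\Dax$ is an isomorphism of groups.
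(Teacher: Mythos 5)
Your proposal is correct and follows the same route as the paper: the paper establishes surjectivity exactly as you do (by observing $\Dax\circ\realmap=\Id$, which works for all $d\geq3$) and then simply cites \cite{Dax}, \cite{Gabai-disks}, \cite{KT-LBT} for the injectivity statement $\realmap\circ\Dax=\Id$ when $d\geq4$, without reproducing the parametrized Whitney-trick argument. Your geometric sketch of that argument (the $0$-dimensional double-point locus, bordism invariance, cancelling surplus pairs via framed nullhomotopic Whitney disks, and the dimension counts forcing $d\geq4$, with $d=4$ at the edge of the metastable range as in \cite{KT-LBT}) is a faithful account of what the cited sources do, and you correctly flag the relative/parametrized subtleties as the genuine technical work deferred to those references.
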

\begin{remark}\label{rem:rel-Dax}
    One can define the relative Dax invariant $\Dax(f_0,f_1)$, if $f_i\in\pi_{d-3}\Arcs{N\sm D^d}$ are such that $p_\u(f_0)=p_\u(f_1)$. Then $\Dax(f_0,f_1)=\Dax(f_1^{-1}\cdot f_0)$,
    % (the latter is equal to $\Dax(f_0-f_1)$ when $d\geq5$), 
    see~\cite[Sec.5.1.1]{KT-LBT}.
\end{remark}

%%%%%%%%%%%%%%%%%%%%%%%%%%%%%%%
\subsection{The connecting map}\label{subsec:dax-compute}
Let us now study the composite 
\[\begin{tikzcd}
    \pi_{d-2}(\ImArcs{X},\u)\ar{r}{\delta_{\incl_X}} & \pi_{d-2}^{Imm,Emb}(X,\u)\ar[two heads]{r}{\Dax} & \Z[\pi_1X].
\end{tikzcd}
\]
\begin{lemma}\label{lem:Dax-dim3}
    The map $\Dax\circ\delta_{\incl_X}$ is a homomorphism of groups also for $d=3$.
\end{lemma}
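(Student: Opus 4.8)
The point is that although for $d=3$ neither $\pi_0(\ImArcs{X},\u)$ nor $\pi_0\Arcs{X}$ is a group, the \emph{relative} $\pi_1^{Imm,Emb}(X,\u)$ is a group, and $\Dax$ on it is still a well-defined set map with $\Dax\circ\realmap=\Id$. So the plan is to show that $\Dax\circ\delta_{\incl_X}$ is additive directly, by unwinding the two ingredients: the loop concatenation on $\pi_1(\ImArcs{X},\u)$ and the count of double point loops. First I would recall that $\delta_{\incl_X}$ takes a based loop $\gamma$ of immersed arcs to the relative class represented by the path $\gamma$ itself (regarded as a path from $\gamma(0)=\u\in\Arcs{X}$ to $\gamma(1)=\u$, through $\ImArcs{X}$), so for two loops $\gamma_1,\gamma_2$ we have $\delta_{\incl_X}(\gamma_1\cdot\gamma_2)$ represented by the concatenated path $\gamma_1\cdot\gamma_2$.

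Next, the key computational step: perturb $\gamma_1$ and $\gamma_2$ separately (rel.\ endpoints) so that the associated adjoint maps $\wt{\gamma_j}\colon\I\times\D^1\to\I\times X$ are immersions with isolated transverse double points, disjoint near the endpoints $s=0,1$ (where the arc is the fixed embedding $\u$, hence already double-point free). The adjoint of the concatenation $\gamma_1\cdot\gamma_2$ is then, after reparametrising the $\I$-coordinate, obtained by stacking $\wt{\gamma_1}$ and $\wt{\gamma_2}$ end to end in the $\I$-direction; its set of double points is the disjoint union of those of $\wt{\gamma_1}$ and $\wt{\gamma_2}$, with the same signs $\varepsilon_{x_i}$ and the same double point loops $g_{x_i}$ (the loop \eqref{eq:dp-loop} is computed inside a single slice $\{s=s_i\}$, so it is unaffected by the stacking). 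Summing \eqref{eq:Dax-def} then gives
\[
    \Dax\big(\delta_{\incl_X}(\gamma_1\cdot\gamma_2)\big)=\Dax\big(\delta_{\incl_X}(\gamma_1)\big)+\Dax\big(\delta_{\incl_X}(\gamma_2)\big)\in\Z[\pi_1X],
\]
and the identity loop maps to $0$ since its adjoint is double-point free, so $\Dax\circ\delta_{\incl_X}$ is a group homomorphism. (Inverses then come for free, or can be checked by running the perturbation in reverse.)

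The main thing to be careful about — the only real obstacle — is well-definedness: one must know that $\Dax(\delta_{\incl_X}(\gamma))\in\Z[\pi_1X]$ does not depend on the chosen perturbation of $\gamma$, and that the answer depends only on the based homotopy class of $\gamma$, so that the stacking argument above legitimately computes $\Dax\circ\delta_{\incl_X}$ on homotopy classes. For $d\geq4$ this is part of Theorem~\ref{thm:dax}; for $d=3$ it is exactly the content needed to make \eqref{eq:Dax-dim3} and \eqref{eq:realmap-dim3} make sense, and it follows because any two perturbations are joined by a generic homotopy whose adjoint over $\I\times\I\times\D^1$ is an immersion with a $1$-manifold of double points, whose boundary accounts for the difference of the two counts — here one uses that along $\partial\I\times\I$ (i.e.\ at $s=0,1$) the arc is the fixed embedding $\u$, so no double points escape through that part of the boundary, exactly as in Dax's original argument. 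Once well-definedness is in hand, the stacking computation is routine.
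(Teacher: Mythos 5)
Your proof is correct and follows exactly the paper's approach: the paper's one‑line proof (``concatenating in the $\I$-direction, so $\Dax\circ\delta_{\incl_X}(a_1\cdot a_2)$ is computed as first counting double point loops in the family $F_{a_1}$, then in $F_{a_2}$'') is precisely your stacking argument, just compressed. Your extra paragraph on well‑definedness via a generic homotopy/cobordism of double points is a sound expansion of what the paper invokes implicitly (it states well‑definedness of $\Dax$ just after \eqref{eq:Dax-def}, citing Dax), and is not a new idea.
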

\begin{proof}
    The argument is the same as for showing that $\Dax$ is a homomorphism when $d\geq4$: given $a_i\colon(\I,\partial\I)\to(\ImArcs{X},\u)$ for $i=1,2$, the loop $a_1\cdot a_2$ is obtained by concatenating in the $\I$-direction, so $\Dax\circ\delta_{\incl_X}(a_1\cdot a_2)$ is computed as first counting double point loops in the family $F_{a_1}$, then in the family $F_{a_2}$, so it equals $\Dax\circ\delta_{\incl_X}(a_1)+\Dax\circ\delta_{\incl_X}(a_2)$.
\end{proof}

For any $d\geq3$ recall from \eqref{eq:pu} that the map $p_\u\colon\ImArcs{X}\to\Omega X$, defined by $p_\u(\alpha)=\alpha\cdot\u^{-1}$, induces isomorphisms on $\pi_n$ for $n\leq d-3$, by Smale. Moreover, for $n=d-2$ we have
\begin{equation}\label{eq:pu2}
\begin{tikzcd}
    \Z \rar[tail] &
    \pi_{d-2}(\ImArcs{X},\u)\ar[two heads]{rr}{\pi_{d-2}p_\u} &&
    \pi_{d-1}X,
\end{tikzcd}
\end{equation}
a short exact sequence with the first map sending $1\in\Z$ to the ``interior twist'', see \cite{KT-LBT}. 
There we also show that the value of $\Dax\circ\delta_{\incl_X}$ on the interior twist is $1\in\Z[\pi_1X]$. That was for $d\geq4$, but the proof goes through for $d=3$ and is easy: a crossing change on $\u$ along a trivial group element can be isotoped to $\u$.

Therefore, picking any set-theoretic section of $\pi_{d-2}p_\u$ (\emph{we will fix a particular choice soon}) and composing it with $\Dax\circ\delta_{\incl_X}$ we obtain a well-defined homomorphism
\begin{equation}\label{eq:dax-def}
    \dax_\u\colon\pi_{d-1}X\to\Z[\pi_1X\sm1].
\end{equation}
In other words, two sections $l_1,l_2$ differ by an element of $\Z$, so $\dax_\u(a)=\Dax\circ\delta_{\incl_X}\circ l_i(a)\pmod{1}$ is well defined. We conclude from this, \eqref{eq:deltaX} and Theorem~\ref{thm:dax} that there are surjective maps
\[
    \Dax\colon\ker(\pi_{d-3}(\incl_X,\u))\sra\faktor{\Z[\pi_1X]}{\dax_\u(\pi_{d-1}X)}.
\]
For $d\geq4$ they are isomorphisms of groups, while for $d=3$ they are surjections of sets with the source understood as $\KK(X,\bm{\u})=(\pi_0\incl_X)^{-1}(\u)$. This completes our recollection of the proof of Theorem~\ref{thm:KT-arcs}, and we next turn to computing $\dax_\u$ and proving Theorem~\ref{thm:dax-compute}.

%%%
\subsubsection{Computing \texorpdfstring{$\dax_\u$}{dax}}
By definition, $\dax_\u(a)\in\Z[\pi_1X\sm1]$ for $a\in\pi_{d-1}X$ is computed as follows. Firstly, represent $a$ by a map $A\colon\S^{d-2}\to \Omega X$, and pick any lift $F_A\colon(\I^{d-2},\partial)\to(\ImArcs{X},\u)$ of $A$, meaning that $p_\u\circ F_A=A$. Secondly, compute $\Dax(F_A)\in\Z[\pi_1X]$, and then disregard any potential $g_{x_i}=1$ in the sum to obtain $\dax_\u(a)$. We now describe a construction of a lift $F_A$, by first choosing a convenient map $A$, which is in terms of a so-called $\u$-whiskered representative $A_\u$. 

Recall that we fix a subinterval $[\mytheta_L,\mytheta_R]\subseteq\D^1=[-1,1]$, and that for an arc $\alpha\colon[c,d]\hra X$ we denote by $\alpha^{-1}$ its time reversal, and for $\theta\in\D^1$ write $\alpha_{\leq\theta}\coloneqq \alpha|_{[c,\theta]}$ and $\alpha_{\geq\theta}\coloneqq \alpha|_{[\theta,d]}$.
\begin{defn}\label{def:A-FA}
    A map $A_\u\colon\I^{d-2}\times[\mytheta_L,\mytheta_R]\to X$ is called a $\u$\emph{-whiskered} representative of a class $a\in\pi_{d-1}(X;x_-)\cong\pi_{d-2}(\Omega X,\u_{\leq\mytheta_R}\cdot\u_{\leq\mytheta_R}^{-1})$ if the following conditions hold (see Figure~\ref{fig:u-whiskered}).
\begin{enumerate}
    \item\label{cond1} 
    Each $A_\u(\vec{t})\colon[\mytheta_L,\mytheta_R]\hra X$ for $\vec{t}\in\I^{d-2}$ is an immersion, whose interior intersects $\u$ transversely and only to the right of $\u(\mytheta_R)$ on $\u$.
    \item\label{cond2}
    For $\vec{t}\in\partial \I^{d-2}$
    we have $A_\u(\vec{t})=\u|_{[\mytheta_L,\mytheta_R]}$.
    \item\label{cond3} 
    The map $(\I^{d-2},\partial)\to(\Omega X,\u_{\leq\mytheta_R}\cdot\u_{\leq\mytheta_R}^{-1})$ defined by the formula
    $\vec{t}\;\mapsto\; \u_{\leq\mytheta_L}\cdot A_\u(\vec{t}) \cdot\u_{\leq\mytheta_R}^{-1}$
    % defines an immersed loop based at $x_-$, 
    is continuous and represents the homotopy class $a$.
    \qedhere
\end{enumerate}
\end{defn}
\begin{remark}\label{rem:u-whiskered-exists}
    Any $a\in\pi_{d-1}(X;x_-)$ has a $\u$-whiskered representative $A_\u$. Namely, we first represent $a$ by an immersion $A\colon(\I^{d-1},\partial\I^{d-1})\to (X,x_-)$, which intersects $\u$ transversely in finitely many points $\u(\theta_i)$ such that $\theta_i>\mytheta_R$, as in Figure~\ref{fig:u-whiskered} (where $w$ is the whisker to the basepoint $x_-$). Moreover, we can reparametrize $A$ so that writing $\I^{d-1}=\I^{d-2}\times\D^1$ gives adjoint maps $A(\vec{t})\colon\D^1\to X$ for $\vec{t}\in\I^{d-2}$, each of which is an immersed arc from $x_-$ to itself, cf.\ Lemma~\ref{lem:Dax}. 

Finally, we precompose the whisker $w$ of $A$ by the loop $\u\cdot\u^{-1}$, then perform a homotopy until each $A(\vec{t})$ agrees with $\u_{\leq\mytheta_L}$ on $[-1,\mytheta_L]$ and with $\u_{\geq\mytheta_R}\cdot\u^{-1}$ on $[\mytheta_R,1]$. The restriction {\color{black}of this final family} to $[\mytheta_L,\mytheta_R]$ is the desired family $A_\u$.
\end{remark}
\begin{figure}[!htbp]
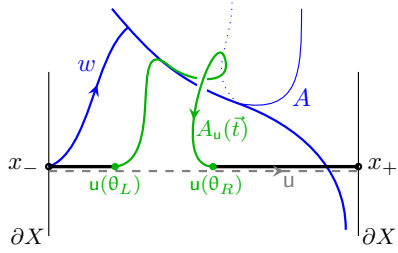

    \centering
    \includestandalone[mode=buildmissing,width=0.35\linewidth]{figures/fig-u-whiskered}
    \captionof{figure}{A $\u$-whiskered representative $A_\u$ of $a$ for a value $\vec{t}$ is shown in green. The corresponding immersed arc $F_{A_\u}(\vec{t})$ is the union of the green arc $A_\u(\vec{t})$ and the solid horizontal arcs.}
    \label{fig:u-whiskered}
\end{figure}
\begin{defn}
    Given a class $a\in\pi_{d-1}X$ and its $\u$-whiskered representative $A_\u$,  define
\[F_{A_\u}\colon(\I^{d-2},\partial)\to(\ImArcs{X},\u),\quad
    F_{A_\u}(\vec{t})\coloneqq\u_{\leq\mytheta_L}\cdot A_\u(\vec{t})\cdot\u_{\geq\mytheta_R}.\qedhere
\]
\end{defn}
Strictly speaking, $F_{A_\u}(\vec{t})$ might not be smooth at the points $\u(\mytheta_L)$ and $\u(\mytheta_R)$, but we can assume that $A_\u$ is chosen so that this is the case, cf.\ Figure~\ref{fig:u-whiskered}.
Moreover, the condition \ref{cond2} for $A_\u$ ensures that $F_{A_\u}(\vec{t})=\u$ for $\vec{t}\in\partial \I^{d-2}$, so $F_{A_\u}$ indeed represents a class in $\pi_{d-2}(\ImArcs{X},\u)$.
\begin{lemma}\label{lem:computing-dax-u}
    The homotopy class of $p_\u\circ F_{A_\u}\colon(\I^{d-2},\partial)\to(\Omega X,\u\cdot\u^{-1})$ is precisely $a\in\pi_{d-1}X$, so $\dax_\u(a)=\Dax(F_{A_\u})$. Moreover, any self intersection of the arc $F_{A_\u}(\vec{t})$ arises either as a self-intersection $x_i$ of an arc $A_\u(\vec{t}_i)$, or as an intersection $y_j$ of an arc $A_\u(\vec{t}_j)$ with $\u_{\geq\mytheta_R}$, and the corresponding double point loops in the two cases can be calculated as:
    \[\begin{cases}
        g_{x_i}=\u_{\leq\mytheta_L}\cdot A_\u(\vec{t}_i)_{\leq\theta_i^-}\cdot A_\u(\vec{t}_i)_{\leq\theta_i^+}^{-1}\cdot\u_{\leq\mytheta_L}^{-1} & \text{ for }x_i\in A\cap A,\\
        g_{y_j}=\u_{\leq\mytheta_L}\cdot A_\u(\vec{t}_j)_{\leq\theta_j^-}\cdot\u_{\leq\theta_j^+}^{-1} & \text{ for }y_j\in A\cap\u.
    \end{cases}
    \]
\end{lemma}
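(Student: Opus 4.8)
The plan is to verify the three assertions in order: first that $p_\u\circ F_{A_\u}$ represents $a$, then that this implies $\dax_\u(a)=\Dax(F_{A_\u})$, and finally the bookkeeping of double point loops. For the first claim, I would unwind the definitions: $p_\u(F_{A_\u}(\vec{t}))=F_{A_\u}(\vec{t})\cdot\u^{-1}=(\u_{\leq\mytheta_L}\cdot A_\u(\vec{t})\cdot\u_{\geq\mytheta_R})\cdot\u^{-1}$, which up to a canonical homotopy of loops (cancelling $\u_{\geq\mytheta_R}$ against the corresponding piece of $\u^{-1}$) agrees with $\u_{\leq\mytheta_L}\cdot A_\u(\vec{t})\cdot\u_{\leq\mytheta_R}^{-1}$. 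By condition~\ref{cond3} in Definition~\ref{def:A-FA}, this is exactly a representative of $a\in\pi_{d-1}X\cong\pi_{d-2}(\Omega X,\u\cdot\u^{-1})$. The continuity and the boundary behaviour ($F_{A_\u}(\vec{t})=\u$ on $\partial\I^{d-2}$, hence $p_\u\circ F_{A_\u}(\vec{t})=\u\cdot\u^{-1}$) come from conditions~\ref{cond1}--\ref{cond2}.

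For the second claim, recall that $\dax_\u(a)$ was defined by picking \emph{any} lift along $p_\u$ of \emph{any} representative of $a$, applying $\Dax\circ\delta_{\incl_X}$, and reducing mod $1$; since $F_{A_\u}$ is such a lift (by the first claim), and since $\Dax$ factors through $\delta_{\incl_X}$ on the image of a lift, we get $\dax_\u(a)=\Dax(F_{A_\u})\pmod1$. One should remark that the $g_{x_i}=1$ terms are precisely what the ``mod $1$'' discards, so the displayed equality is an equality in $\Z[\pi_1X\sm1]$ (or, reading $\Dax(F_{A_\u})$ already reduced, an honest equality). This step is essentially formal given the setup in Section~\ref{subsec:dax-compute}.

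The substantive part is the last claim, and it is really a general-position analysis of the immersed arc $F_{A_\u}(\vec{t})=\u_{\leq\mytheta_L}\cdot A_\u(\vec{t})\cdot\u_{\geq\mytheta_R}$. Its three pieces are: the embedded arc $\u_{\leq\mytheta_L}$, the immersed middle piece $A_\u(\vec{t})$, and the embedded arc $\u_{\geq\mytheta_R}$. By condition~\ref{cond1}, $A_\u(\vec{t})$ meets $\u$ only to the right of $\u(\mytheta_R)$, i.e.\ only along $\u_{\geq\mytheta_R}$, and not along $\u_{\leq\mytheta_L}$; and $\u_{\leq\mytheta_L}$, $\u_{\geq\mytheta_R}$ are embedded and disjoint. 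Hence every double point of $F_{A_\u}(\vec{t})$ is either a self-intersection $x_i$ of the middle piece $A_\u(\vec{t}_i)$, or a crossing $y_j$ of $A_\u(\vec{t}_j)$ with $\u_{\geq\mytheta_R}$. Then I would just plug into the definition \eqref{eq:dp-loop} of the double point loop: at $x_i$, with parameters $\theta_i^-<\theta_i^+$ in the $[\mytheta_L,\mytheta_R]$-coordinate, we have $F_{A_\u}(\vec{t}_i)_{\leq\theta_i^-}=\u_{\leq\mytheta_L}\cdot A_\u(\vec{t}_i)_{\leq\theta_i^-}$ and likewise for $\theta_i^+$, so
\[
    g_{x_i}=\bigl[\u_{\leq\mytheta_L}\cdot A_\u(\vec{t}_i)_{\leq\theta_i^-}\cdot\bigl(\u_{\leq\mytheta_L}\cdot A_\u(\vec{t}_i)_{\leq\theta_i^+}\bigr)^{-1}\bigr]=\bigl[\u_{\leq\mytheta_L}\cdot A_\u(\vec{t}_i)_{\leq\theta_i^-}\cdot A_\u(\vec{t}_i)_{\leq\theta_i^+}^{-1}\cdot\u_{\leq\mytheta_L}^{-1}\bigr];
\]
at $y_j$, the two preimages are $\theta_j^-$ lying in the middle piece and $\theta_j^+$ lying on $\u_{\geq\mytheta_R}$, so $F_{A_\u}(\vec{t}_j)_{\leq\theta_j^-}=\u_{\leq\mytheta_L}\cdot A_\u(\vec{t}_j)_{\leq\theta_j^-}$ while $F_{A_\u}(\vec{t}_j)_{\leq\theta_j^+}$ is homotopic rel endpoints to $\u_{\leq\theta_j^+}$ (it is $\u_{\leq\mytheta_L}$ followed by a whiskered null-homotopic loop coming from $A_\u$ and then $\u_{[\mytheta_R,\theta_j^+]}$, but after the canonical reparametrisation this is literally $\u_{\leq\theta_j^+}$), giving $g_{y_j}=\u_{\leq\mytheta_L}\cdot A_\u(\vec{t}_j)_{\leq\theta_j^-}\cdot\u_{\leq\theta_j^+}^{-1}$.

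The only mild subtlety — and the step I would be most careful about — is the identification of $F_{A_\u}(\vec{t}_j)_{\leq\theta_j^+}$ with $\u_{\leq\theta_j^+}$ up to homotopy rel endpoints at the crossing $y_j$: one must note that for the \emph{purpose of the group element} $g_{y_j}\in\pi_1X$ one only needs the homotopy class of this initial segment, and that the part of $F_{A_\u}(\vec{t}_j)$ between $\u(\mytheta_R)$ and the crossing point $\u(\theta_j^+)$ runs along $\u_{\geq\mytheta_R}$, so the concatenation $\u_{\leq\mytheta_L}\cdot(\text{middle part})\cdot\u_{[\mytheta_R,\theta_j^+]}$ is homotopic rel endpoints to $\u_{\leq\theta_j^+}$ as claimed. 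Everything else is a direct substitution into \eqref{eq:dp-loop}, and the signs $\varepsilon_{x_i},\varepsilon_{y_j}$ are unaffected since $\u_{\leq\mytheta_L}$ and $\u_{\geq\mytheta_R}$ contribute fixed (embedded) sheets. I expect no real obstacle beyond this; the content is entirely in the careful reading of condition~\ref{cond1}.
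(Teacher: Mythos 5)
Your proof plan follows essentially the same route as the paper's: the same manipulation $p_\u\circ F_{A_\u}(\vec{t})=\u_{\leq\mytheta_L}\cdot A_\u(\vec{t})\cdot\u_{\geq\mytheta_R}\cdot\u^{-1}\simeq\u_{\leq\mytheta_L}\cdot A_\u(\vec{t})\cdot\u_{\leq\mytheta_R}^{-1}$ and invocation of Definition~\ref{def:A-FA}\ref{cond3}, the same case analysis using condition~\ref{cond1} to rule out crossings with $\u_{\leq\mytheta_L}$ together with embeddedness of $\u$ to rule out $\u_{\leq\mytheta_L}\cap\u_{\geq\mytheta_R}$, and the same direct substitution into \eqref{eq:dp-loop}. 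You also correctly identify the one delicate point: that $F_{A_\u}(\vec{t}_j)_{\leq\theta_j^+}$ is only \emph{homotopic} rel endpoints to $\u_{\leq\theta_j^+}$, not equal to it.

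However, the justification you offer for that homotopy has a gap. Saying that ``the part of $F_{A_\u}(\vec{t}_j)$ between $\u(\mytheta_R)$ and the crossing point $\u(\theta_j^+)$ runs along $\u_{\geq\mytheta_R}$'' is true but beside the point: the segment that needs to be dealt with is the \emph{middle} piece $A_\u(\vec{t}_j)$, which is a priori an arbitrary immersed arc from $\u(\mytheta_L)$ to $\u(\mytheta_R)$, not a reparametrization of $\u|_{[\mytheta_L,\mytheta_R]}$ (so ``literally'' is incorrect). Your parenthetical ``a whiskered null-homotopic loop coming from $A_\u$'' is the right picture — this loop is $A_\u(\vec{t}_j)\cdot\u|_{[\mytheta_L,\mytheta_R]}^{-1}$ — but nowhere do you supply the reason it is null-homotopic. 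The missing step, which the paper states explicitly, is that $A_\u(\vec{t}_j)$ is homotopic rel endpoints to $\u|_{[\mytheta_L,\mytheta_R]}$: run $\vec{t}_j$ along a path in the connected cube $\I^{d-2}$ to some $\vec{t}_0\in\partial\I^{d-2}$, where $A_\u(\vec{t}_0)=\u|_{[\mytheta_L,\mytheta_R]}$ by condition~\ref{cond2}. Once you add this sentence, your argument closes. (A trivial side remark: when you write that ``the $g_{x_i}=1$ terms are precisely what the mod $1$ discards,'' the $g_{y_j}=1$ terms are discarded too; both types can in principle be trivial.)
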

\begin{proof}
    By definition of $p_\u$ and $F_{A_\u}$
    we have $p_\u\circ F_{A_\u}(\vec{t})=\u_{\leq\mytheta_L}\cdot A_\u(\vec{t})\cdot\u_{\geq\mytheta_R}\cdot\u^{-1}$.
    There is an obvious homotopy $\u_{\geq\mytheta_R}\cdot\u^{-1}\simeq\u_{\leq\mytheta_R}$ rel.\ endpoints, so the first claim follow from Definition~\ref{def:A-FA}~\ref{cond3}. 
        For the second claim, first note that the double points of the arcs $F_{A_\u}(\vec{t})$ can arise either
    \begin{enumerate}
    \item[-]
    as the intersections of $\u_{\leq\mytheta_L}$ with $A_\u(\vec{t}_j)$, or
    \item[-]
    as the intersections of $\u_{\leq\mytheta_L}$ with $\u_{\geq\mytheta_R}$, or
    \item[(x)]
    as the self-intersections $x_i$ of $A_\u(\vec{t}_i)$ for some $\vec{t}_i$, or
    \item[(y)]
    as the intersections $y_j$ of $A_\u(\vec{t}_j)$ with
    $\u_{\geq\mytheta_R}$.
\end{enumerate}

However, by Definition~\ref{def:A-FA}\ref{cond1} points in $A\pitchfork\u$ occur in $\u$ to the right of $\u(\mytheta_R)$, so the first case does not arise. The second case is obviously not possible. 

In both remaining cases we have $F_{A_\u}(\vec{t}_i)_{\leq\theta_i^-}=\u_{\leq\mytheta_L}\cdot A_\u(\vec{t}_i)_{\leq\theta_i^-}$. In the $x$-case we similarly have $ F_{A_\u}(\vec{t}_i)_{\leq\theta_i^+}=\u_{\leq\mytheta_L}\cdot A_\u(\vec{t}_i)_{\leq\theta_i^+}$, so the formula for $g_{x_i}$ follows from its definition \eqref{eq:dp-loop}. For the $y$-case observe that the arc $F_{A_\u}(\vec{t}_j)_{\leq\theta_j^+}$ is homotopic rel.\ endpoints to $\u_{\leq\theta_j^+}$ since each $A_\u(\vec{t}_j)$ is homotopic to $A_\u(\vec{t})=\u|_{[\mytheta_L,\mytheta_R]}$, for $\vec{t}\in\partial\I^{d-2}$.
\end{proof}

%%%
\subsubsection{Computing $\dax$}
Right before the statement of Theorem~\ref{thm:dax-compute} we have introduced the notation
\[
    \dax\coloneqq\dax_{\u_-}\colon\pi_{d-1}X\to\Z[\pi_1X\sm1]
\]
for a fixed arc $\u_-\colon\D^1\hra X$ isotopic into $\partial X$ rel.\ endpoints and with $\u_-(-1)=x_-$.
We choose
\begin{equation}\label{def:short-arc}
    \u_-=\u_{\leq\mytheta_R}\cdot\u'_{\geq\mytheta_R},
\end{equation}
as in the left part of Figure~\ref{fig:u-minus-whiskered}.
Namely, $\u'_{\geq\mytheta_R}\colon [\mytheta_R,1]\to X$ is a slight pushoff of the reverse of $\u_{\leq\mytheta_R}$, so that it goes from $\u(\mytheta_R)$ to some point $x'_-\in\partial X$ near $x_-\in\partial X$, and so that $\u_-$ is a smooth neat arc from $x_-$ to $x'_-$. By abuse of notation, we consider $\u_-$ also as a loop based at $x_-$.
\begin{figure}[!htbp]
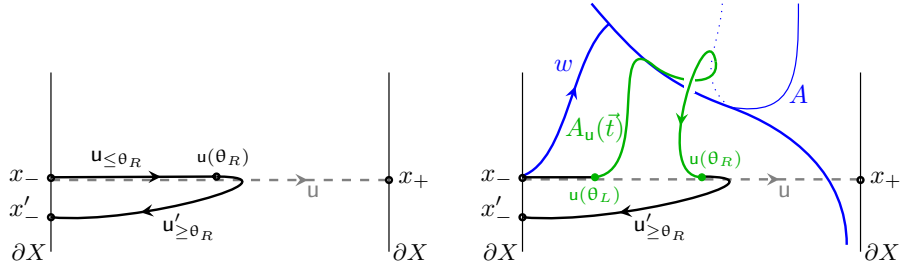

    \centering
    \includestandalone[mode=buildmissing,width=0.8\linewidth]{figures/fig-u-minus-whiskered}
    \caption{
        \emph{Left.} The arc $\u_-$ goes from $x_-$ to a nearby point $x_-'$. \emph{Right.} A $\u_-$-whiskered representative of $a$.
    }
    \label{fig:u-minus-whiskered}
\end{figure}
\begin{remark}\label{rem:u-minus-whiskered}
    Note that if $A_\u$ is a $\u$-whiskered representative of $a$, then it is also its $\u_-$-whiskered representative, since all condition of Definition~\ref{def:A-FA} are still fulfilled, {\color{black}see the right part of Figure~\ref{fig:u-minus-whiskered}}. Hence, the map $F_{A_\u}(\vec{t})=\u_{\leq\mytheta_L}\cdot A_\u(\vec{t}) \cdot\u_{\geq\mytheta_R}$ simply changes to $F_{A_{\u_-}}(\vec{t})=\u_{\leq\mytheta_L}\cdot A_\u(\vec{t}) \cdot\u'_{\geq\mytheta_R}$.
\end{remark}

%%%%%%%%%%%%%%%%%%%%%%%%%%%%%%%
\subsection{Formulae}\label{subsec:formulae}
In this section we express the values of $\dax_\u$ on certain elements in $\pi_{d-1}X$, and prove Theorem~\ref{thm:dax-compute} and Corollary~\ref{cor:dax-compute}. Let us first fix some notation.
\subsubsection{Notation}
Equip $\pi_{d-1}X=\pi_{d-1}(X,x_-)$ with the usual $\pi_1X=\pi_1(X,x_-)$-action, and the set
\[
    \pi_1(X,\partial X)\coloneqq\pi_1(X,\partial X,x_-)=\{k\colon\D^1\to X\mid k(-1)=x_-,\,k(+1)\in\partial X\}/\simeq
\]
with the action of $g\in\pi_1X$ by precomposition, $\bm{k}\mapsto g \bm{k}$. 
Moreover, we have the standard involution on the group ring $\Z[\pi_1X]\coloneqq\{\sum\varepsilon_ig_i:\varepsilon_i=\pm1,g_i\in\pi_1X\}$, which linearly extends $\ol{g}\coloneqq g^{-1}$. 

Next, we recall (see for example \cite{Ranicki}) the usual definition of the \emph{equivariant intersection pairing} 
\[
    \lambda\colon \pi_{d-1}X\times\pi_1(X,\partial X)\to\Z[\pi_1X].
\]
Given classes $a\in\pi_{d-1}X$ and $\bm{k}\in\pi_1(X,\partial X)$, pick smooth representatives $A\colon\S^{d-1}\to X$ and $k\colon(\D^1,\partial\D^1)\to(X,\partial X)$ that intersect transversely and in the interior of $X$, excluding the point $A(e)=k(-1)=x_-\in\partial X$, see Figure~\ref{fig:lambda-ex}. For a transverse intersection point $y\in A(\S^{d-1})\cap k(\D^1)$ define the double point loop
\begin{equation}\label{eq:loop-def-lambda}
    \lambda_y(A,k)\coloneqq\lambda_y(A)\cdot \lambda_y(k)^{-1},
    % \lambda_y(k,A)\coloneqq\lambda_y(k)\cdot \lambda_y(A)^{-1}\quad \in\pi_1X
\end{equation}
where paths $\lambda_y(A)$ and $\lambda_y(k)$ go from $x_-$ to $y$, respectively along $A$ and $k$ (the choice of a path along $A$ is irrelevant as $\S^{d-1}$ is simply connected). The sign $\varepsilon_y(A,k)$ is obtained by comparing the orientation of the tangent space $T_yX$ to that of
$dA|_y(T\S^{d-1})\oplus dk|_y(T\D^1)$, and we let
\[
    \lambda(a,\bm{k})\coloneqq\sum_{ y\in (A\cap k)\sm\{x_-\}}\varepsilon_y(A,k)[\lambda_y(A,k)].
\]
Note that we can also compute $\lambda(a,g)$ for $g\in\pi_1X$, using the canonical map $\pi_1X\to\pi_1(X,\partial X)$.
\begin{figure}[!htbp]
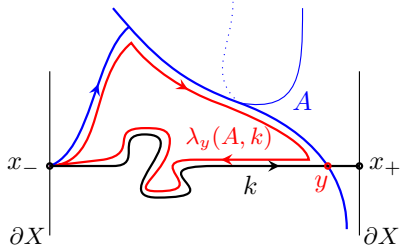

    \centering
    \includestandalone[mode=buildmissing,width=0.35\linewidth]{figures/fig-lambda-ex}
    \caption{Associated to the intersection point $y\in A\cap k$ is the loop $\lambda_y(A,k)$, based at $x_-$.}
    \label{fig:lambda-ex}
\end{figure}

That $\lambda$ is a well-defined invariant of homotopy classes follows from its description using the Hurewicz map on the universal cover
\[
    \pi_{d-1}X\cong\pi_{d-1}\wt{X}\to H_{d-1}(\wt{X};\Z)\cong H_{d-1}(X;\Z[\pi_1X])
\]
and Poincar\'e duality with $\Z[\pi_1X]$-coefficients $H_{d-1}(X;\Z[\pi_1X])\times  H_1(X,\partial X;\Z[\pi_1X])\to \Z[\pi_1X]$.

Finally, one analogously defines a pairing $\lambda\colon\pi_1(X,\partial X)\times\pi_{d-1}X\to\Z[\pi_1X]$, and immediately has
    \begin{equation}\label{rem:other-lambda}
        \lambda(\bm{k},a)=(-1)^{d-1}\ol{\lambda(a,\bm{k})},
    \end{equation}
since $\lambda_y(k,A)=\lambda_y(k)\cdot\lambda_y(A)^{-1}=\lambda_y(A,k)^{-1}$ and exchanging the order of $dA|_y(T\S^{d-1})$ and $dK|_y(T\D^1)$ changes the orientation by $(-1)^{d-1}$. We use this pairing just as a shorter notation for the right hand side of \eqref{rem:other-lambda}.

The following standard properties of $\lambda$ are easy to check and will be useful in our computations.
\begin{lemma}\label{lem:lambda}
    For $\bm{k}\in\pi_1(X,\partial X)$, $g_1,g_2\in\pi_1X$ and $a_1,a_2\in\pi_{d-1}X$ we have
\begin{align*}
    \lambda(g_1 a_1+g_2 a_2,\;\bm{k}) &=g_1\lambda(a_1,\bm{k})+g_2\lambda(a_2,\bm{k}),\\
    \lambda(a,\;g\bm{k})&=\lambda(a,g)+\lambda(a,\bm{k})\ol{g}.
\end{align*}
\end{lemma}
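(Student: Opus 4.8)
The plan is to verify both identities directly from the geometric definition of $\lambda$, using that (as recalled above, via the Hurewicz map on $\wt{X}$ and Poincaré duality with $\Z[\pi_1X]$-coefficients) $\lambda$ is a homotopy invariant. Thus we are free to replace $a$, $\bm{k}$ and $g$ by any convenient transverse smooth representatives, and in each case I will read off the loops $\lambda_y$ and signs $\varepsilon_y$ at the intersection points.

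For the first identity, recall that the $\pi_1X$-action on $\pi_{d-1}X$ is realised on whiskered maps: if $A_i\colon\S^{d-1}\to X$ with whisker $w_i$ from $x_-$ represents $a_i$, then $g_i\,a_i$ is represented by $A_i$ with whisker $g_i\cdot w_i$. Hence $g_1a_1+g_2a_2$ is represented by a map $B\colon\S^{d-1}\vee\S^{d-1}\to X$ whose two summands are copies of $A_1,A_2$ attached at $x_-$ by the whiskers $g_1\cdot w_1$, $g_2\cdot w_2$, which we make transverse to a fixed representative $k$ of $\bm{k}$ in the interior and away from $x_-$. Every intersection point $y\in B\cap k$ lies on exactly one summand, say the $i$-th, and a path from $x_-$ to $y$ along $B$ is $g_i$ followed by a path along $A_i$, so $\lambda_y(B,k)=g_i\,\lambda_y(A_i,k)$, while $\varepsilon_y$ is unchanged since a whisker carries no orientation data. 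Splitting the sum $\sum_y\varepsilon_y[\lambda_y(B,k)]$ according to which summand contains $y$ gives exactly $g_1\lambda(a_1,\bm{k})+g_2\lambda(a_2,\bm{k})$. (Alternatively, this is automatic since both the Hurewicz map $\pi_{d-1}X\cong\pi_{d-1}\wt{X}\to H_{d-1}(\wt{X})$ and the Poincaré duality pairing are maps of $\Z[\pi_1X]$-modules.)

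For the second identity, pick a representative $A$ of $a$, a representative $k\colon(\D^1,\partial\D^1)\to(X,\partial X)$ of $\bm{k}$ with $k(-1)=x_-$, and a loop $g\colon(\D^1,\partial\D^1)\to(X,x_-)$, all pairwise transverse in the interior and avoiding $x_-$. Then the concatenation $g\cdot k$ (traverse $g$, returning to $x_-\in\partial X$, then $k$) represents $g\bm{k}$, and after smoothing its intersection points with $A$ split into those on the $g$-portion and those on the $k$-portion. For $y$ on the $k$-portion a path from $x_-$ to $y$ along $g\cdot k$ is $g$ followed by $\lambda_y(k)$, so
\[
    \lambda_y(A,g\cdot k)=\lambda_y(A)\cdot\big(g\cdot\lambda_y(k)\big)^{-1}=\lambda_y(A,k)\,\ol{g},
\]
and these points contribute $\lambda(a,\bm{k})\ol{g}$. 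For $y$ on the $g$-portion, viewing $g$ in $\pi_1(X,\partial X)$ via the canonical map (its endpoint $x_-$ lies on $\partial X$), the path along $g\cdot k$ from $x_-$ to $y$ is the path along $g$, so $\lambda_y(A,g\cdot k)=\lambda_y(A,g)$ and these points contribute $\lambda(a,g)$. Adding both groups gives $\lambda(a,g\bm{k})=\lambda(a,g)+\lambda(a,\bm{k})\ol{g}$.

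The only point requiring care is the bookkeeping of orientations: one must check that the sign $\varepsilon_y$ of a point on the $g$-portion of $g\cdot k$ coincides with its sign as an intersection point of $A$ with the arc $g$, and similarly on the $k$-portion. This is immediate from the definition of $\varepsilon_y$ as the comparison of $T_yX$ with $dA|_y(T\S^{d-1})\oplus dk|_y(T\D^1)$, which depends only on the oriented tangent line of the path at $y$ — and in $g\cdot k$ both portions are traversed in their standard direction — but it is the step one genuinely has to not skip.
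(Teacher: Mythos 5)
Your proof is correct, and it fills in a verification that the paper deliberately omits: the paper simply states these as ``standard properties of $\lambda$ [which] are easy to check,'' so there is no paper proof to compare against. Your argument proceeds exactly as one would expect from the geometric definition of $\lambda$ given just before the lemma: for the first identity, realise $g_1a_1+g_2a_2$ by a wedge of whiskered spheres and partition the intersection points with $k$ by summand; for the second, realise $g\bm{k}$ by the concatenation $g\cdot k$ and partition the intersection points with $A$ according to whether they lie on the $g$-portion or the $k$-portion, reading off $\lambda_y(A,g\cdot k)=\lambda_y(A,g)$ in the first case and $\lambda_y(A,g\cdot k)=\lambda_y(A,k)\,\ol{g}$ in the second. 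The bookkeeping of loops matches the paper's convention $\lambda_y(A,k)=\lambda_y(A)\cdot\lambda_y(k)^{-1}$, and your remark about the signs is exactly right: since $\varepsilon_y$ depends only on the oriented tangent line of the arc at $y$, and the concatenation traverses each portion in its own standard orientation, the signs are inherited unchanged. The parenthetical alternative for the first identity (that Hurewicz on $\wt X$ and $\Z[\pi_1X]$-coefficient Poincar\'e duality are both $\Z[\pi_1X]$-module maps) is also a perfectly valid shortcut.

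One small point worth making explicit, though it does not affect correctness: the concatenation $g\cdot k$ touches $\partial X$ at the interior time corresponding to the concatenation point, so strictly speaking one should perturb the representative (e.g.\ push the concatenation point slightly into the interior, or replace $k(-1)=x_-$ by a nearby $x_-'\in\partial X$) to get an arc whose only boundary times are at $\partial\D^1$. This is implicit in your phrase ``all pairwise transverse in the interior and avoiding $x_-$,'' but it is the one place where a reader might pause.
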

In other words, for any $\bm{k}$ the map $\lambda(-,\bm{k})\colon\pi_{d-1}X\to\Z[\pi_1X]$ is $\Z[\pi_1X]$-linear, while for any $a$ the map $\lambda(a,-)\colon\pi_1(X,\partial X)\to\Z[\pi_1X]$ is a map of $\pi_1X$-sets which satisfies a Fox derivative rule.
% \begin{align*}
%     \lambda(\bm{k},g_1  a_1+g_2  a_2) &=\lambda(\bm{k},a_1)\ol{g_1}+\lambda(\bm{k},a_2)\ol{g_2},\\
%     \lambda(g  \bm{k},a)&=\lambda(g,a)+g\lambda(\bm{k},a).
% \end{align*}
Note that taking $\bm{k}=\ol{g}$ in the second equality, we obtain $0=\lambda(a,1)=\lambda(a,g)+\lambda(a,\ol{g})\ol{g}$.
% so we also have $\lambda(a,g \bm{k})g =\lambda(a,\bm{k})-\lambda(a,\ol{g})$.

Recall from the introduction that $\lambdabar(a,\bm{k})\in\Z[\pi_1X\sm1]$ is defined by forgetting the term given by $1$ in $\lambda(a,\bm{k})$. One needs to be careful when applying the formulae from Lemma~\ref{lem:lambda}, since, for example, $\lambdabar(ga,\bm{k})\neq g\lambdabar(a,\bm{k})$. However, one easily checks that the following does hold. 
\begin{lemma}\label{lem:lambdabar}
    $\lambdabar(ga,\;g\bm{k})=\lambdabar(ga,g)+g\lambdabar(a,\bm{k})\ol{g}$.
\end{lemma}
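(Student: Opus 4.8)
The plan is to derive Lemma~\ref{lem:lambdabar} from the second formula of Lemma~\ref{lem:lambda} by carefully tracking where the coefficient at $1\in\pi_1X$ goes under the substitution $\bm{k}\mapsto g\bm{k}$ and $a\mapsto ga$. Writing out the full equivariant pairing, Lemma~\ref{lem:lambda} gives $\lambda(ga,g\bm{k})=\lambda(ga,g)+\lambda(ga,\bm{k})\ol{g}=\lambda(ga,g)+g\lambda(a,\bm{k})\ol{g}$, using also $\Z[\pi_1X]$-linearity in the first slot. So I must compare the ``bar'' (i.e.\ degree-zero-term-deleted) versions of the two sides of $\lambda(ga,g\bm{k})=\lambda(ga,g)+g\lambda(a,\bm{k})\ol{g}$.

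The key observation is that the conjugation map $r\mapsto grg^{-1}$ on $\Z[\pi_1X]$ fixes the element $1$ and permutes $\pi_1X\sm1$ among itself; hence it commutes with the operation of deleting the coefficient at $1$. Concretely, if $r=\sum_h c_h\, h$ then $g r\ol g=\sum_h c_h\, ghg^{-1}$, and the coefficient of $1$ in $grg^{-1}$ equals the coefficient of $1$ in $r$ (namely $c_1$), so $\overline{g r\ol g}=g\,\ol r\,\ol g$. Applying this with $r=\lambda(a,\bm{k})$ yields $\overline{g\lambda(a,\bm{k})\ol g}=g\,\lambdabar(a,\bm{k})\,\ol g$. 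Meanwhile $\lambdabar(ga,g)$ is by definition $\lambda(ga,g)$ with its $1$-term removed. So I would first record that the $1$-terms on the two sides of the displayed identity match: the coefficient of $1$ in $\lambda(ga,g\bm{k})$ equals that in $\lambda(ga,g)$ plus that in $g\lambda(a,\bm{k})\ol g$, which is consistent precisely because the $1$-term of $g\lambda(a,\bm k)\ol g$ agrees with the $1$-term of $\lambda(a,\bm k)$.

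Then the argument is essentially bookkeeping: subtract the $1$-term from each side of $\lambda(ga,g\bm{k})=\lambda(ga,g)+g\lambda(a,\bm{k})\ol g$. The left side becomes $\lambdabar(ga,g\bm{k})$ by definition; the right side becomes $\lambdabar(ga,g)+\overline{g\lambda(a,\bm k)\ol g}$, and by the conjugation-vs-bar compatibility this is $\lambdabar(ga,g)+g\,\lambdabar(a,\bm{k})\,\ol g$, giving exactly the claimed formula $\lambdabar(ga,g\bm{k})=\lambdabar(ga,g)+g\lambdabar(a,\bm{k})\ol{g}$.

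The only subtlety — and the step I would be most careful about — is the interaction of the two different ``forgettings'' of the $1$-term: one must check that one is not accidentally double-counting or dropping a term, i.e.\ that the $1$-term of the right-hand side of Lemma~\ref{lem:lambda}'s second identity (after conjugation) is genuinely the same $1$-term that appears on the left, so that passing to $\lambdabar$ on both sides is legitimate. This is immediate once one notes $g\cdot1\cdot\ol g=1$, so it is less an obstacle than a point requiring a sentence of justification. In particular this explains the warning in the text that $\lambdabar(ga,\bm k)\neq g\lambdabar(a,\bm k)$ in general: there the coefficient of $1$ in $\lambda(ga,\bm k)$ need \emph{not} equal the coefficient of $1$ in $g\lambda(a,\bm k)$ because no conjugation is present, whereas in the statement of Lemma~\ref{lem:lambdabar} the full conjugate $g(-)\ol g$ appears and the compatibility goes through.
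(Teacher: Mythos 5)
Your proof is correct. The paper does not actually write out a proof of Lemma~\ref{lem:lambdabar}---it only asserts ``one easily checks that the following does hold''---and the verification you supply is exactly the intended one: derive $\lambda(ga,g\bm{k})=\lambda(ga,g)+g\lambda(a,\bm{k})\ol g$ from Lemma~\ref{lem:lambda}, then delete the $1$-coefficient on both sides, using that this deletion is additive and commutes with conjugation $r\mapsto gr\ol g$ because $g\cdot 1\cdot\ol g=1$. One minor stylistic remark: writing $\overline{g\lambda(a,\bm k)\ol g}$ overloads the overbar, which the paper reserves for the involution $\ol g=g^{-1}$; it would be cleaner to introduce a separate symbol for the $1$-term-deletion map to avoid ambiguity, but the argument itself is sound.
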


\subsubsection{Proofs of Theorem~\ref{thm:dax-compute} and Corollary~\ref{cor:dax-compute}}
We now relate homomorphisms $\dax_\u$ and $\dax\coloneqq \dax_{\u_-}$ from $\pi_{d-1}X$ to $\Z[\pi_1X\sm1]$, for the arc $\u_-$ as in \eqref{def:short-arc}. 
That is, we prove the formula from \ref{thm:dax-compute}\ref{eq:A}:
\[
    \dax_\u(a)=\dax(a)+\lambdabar(a,\bm{\u}),\quad \text{for every } a\in\pi_{d-1}X.
\]
\begin{proof}[Proof of Theorem~\ref{thm:dax-compute}\ref{eq:A}]
    Pick a $\u$-whiskered representative $A_\u$ for $a\in\pi_{d-1}X$ as in Definition~\ref{def:A-FA}. Then by Lemma~\ref{lem:computing-dax-u} we have
    \[
        \dax_\u(a)\coloneqq\Dax(F_{A_\u})=\Dax_x(F_{A_\u})+\Dax_y(F_{A_\u})
    \]
    for $F_{A_\u}\colon\S^{d-2}\to\ImArcs{X}$ given by immersed arcs $F_{A_\u}(\vec{t})\coloneqq\u_{\leq\mytheta_L}\cdot A_\u(\vec{t})\cdot\u_{\geq\mytheta_R}$ whose double points are either self-intersections of $A_\u(\vec{t})$ (type $x$), or intersections of $A_\u(\vec{t})$ with $\u$ (type $y$).

    On the other hand, recall from Remark~\ref{rem:u-minus-whiskered} that a $\u_-$-whiskered representative is given by
    \[
    F_{A_{\u_-}}(\vec{t})\coloneqq \u_{\leq\mytheta_L} \cdot A_\u(\vec{t}) \cdot \u'_{\geq\mytheta_R}.
    \]
    Therefore, $\dax(a)=\Dax(F_{A_{\u_-}})=\Dax_x(F_{A_\u})$. Namely, there is no change in the set of $x$-points and their associated loops, but now there are no $y$-points since we assume the arc $\u_-$ is ``short enough'', i.e.\  $A_\u(\vec{t})\cap\u'_{\geq\mytheta_R}=\emptyset$. Thus, we need only show that $\Dax_y(F_{A_\u})=\lambdabar(a,\u)$.
    
    We have
    $y_j= A(\vec{t}_j)(\theta^-_j)=\u(\theta_j^+)$,
    for some $\vec{t}_j\in\I^{d-2}$, $\theta_j^-\in [\mytheta_L,\mytheta_R]$, $\theta_j^+\in[\mytheta_R,1]$, see Figure~\ref{fig:u-whiskered}. By Lemma~\ref{lem:computing-dax-u}, the associated Dax double point loop is given by
    \[
        g_{y_j}=\u_{\leq\mytheta_L}\cdot A(\vec{t}_j)_{\leq\theta_j^-}\cdot\u_{\leq\theta_j^+}^{-1}.
    \]
    This is precisely equal to $\lambda_{y_j}(A,\u)$ from \eqref{eq:loop-def-lambda} since $\lambda_{y_j}(A)=\u_{\leq\mytheta_L}\cdot A(\vec{t}_j)_{\leq\theta_j^-}$ and $\lambda_{y_j}(\u)=\u_{\leq\theta_j^+}$ (cf.\ Figures~\ref{fig:realmap} and~\ref{fig:lambda-ex}). Moreover, any $g_{y_j}\simeq1$ is not counted towards $\dax_\u(a)$ by definition, see \eqref{eq:dax-def}, so we will have $\Dax_y(F_{A_\u})=\lambdabar(a,\bm{\u})$ once we check that the signs agree.
    
    To this end, the Dax sign $\varepsilon_{y_j}$ compares the orientation of the vector space
    \[
        dA|_{(\vec{t}_j,\theta_j^-)}(T\I^{d-2}\oplus T\D^1)\oplus d\const_{\u}|_{\theta_j^+}(T\I^{d-2}\oplus T\D^1)\quad\text{ to }\quad T_{\vec{t}_j}\I^{d-2}\oplus T_{y_j}X,
    \]
    while the sign $\varepsilon_{y_j}(A,\u)$ for $\lambda$ compares $dA|_{(\vec{t}_j,\theta_j^-)}(T\S^{d-1})\oplus d\u|_{\theta_j^+}(T\D^1)$ to $T_{y_j}X$, or equivalently 
    \[
        T_{\vec{t}_j}\I^{d-2}\oplus dA|_{(\vec{t}_j,\theta_j^-)}(T\S^{d-1})\oplus d\u|_{\theta_j^+}(T\D^1)\quad\text{ to }\quad
        T_{\vec{t}_j}\I^{d-2}\oplus T_{y_j}X.
    \]
    Two out of four displayed oriented spaces are the same, while the other two differ by $(d-2)(d-1)$ transpositions, which is an even number, so $\varepsilon_{y_j}=\varepsilon_{y_j}(A,\u)$.
\end{proof}
We immediately obtain Corollary~\ref{cor:dax-compute}\ref{eq:cor1}: if $A$ is embedded then all arcs in the family $F_{A_{\u_-}}$ are embedded so $\dax(a)=\Dax_x(F_{A_{\u_-}})=0$. Moreover, Corollary~\ref{cor:dax-compute}\ref{eq:cor2} is also immediate since
\[
    \dax_{g\u}(a)-\dax_\u(a)=\dax(a)+\lambdabar(a,g\bm{\u})-(\dax(a)+\lambdabar(a,\bm{\u}))=\lambdabar(a,g\bm{\u})-\lambdabar(a,\bm{\u}).
    % =\lambda(a,g)+\lambda(a,\bm{\u})(\ol{g}-1).
\]

Let us now prove part~\ref{eq:B} of Theorem~\ref{thm:dax-compute}: for any $a\in\pi_{d-1}X$ and $g\in\pi_1X$ we claim that
\[
    \dax(g a)\;=\;
    g\dax(a)\ol{g}\;-\;\lambdabar(g a,g)+\lambdabar(g,g a),
\]
recalling from~\eqref{rem:other-lambda} that $\lambdabar(g,g a)=(-1)^{d-1}\ol{\lambdabar(g a,g)}$.
\begin{proof}[Proof of Theorem~\ref{thm:dax-compute}\ref{eq:B}]
    Pick a $\u_-$-whiskered representative $A_{\u_-}\colon\I^{d-2}\times[\mytheta_L,\mytheta_R]\to X$ as in Definition~\ref{def:A-FA} so that $\Dax(F_{A_{\u_-}})=\Dax_x(F_{A_{\u_-}})$ is the sum over all self-intersections of $A_{\u_-}(\vec{t})$ (see the previous proof). We now describe a $\u_-$-whiskered representative of $ga$. 
    
    \begin{figure}[!htbp]
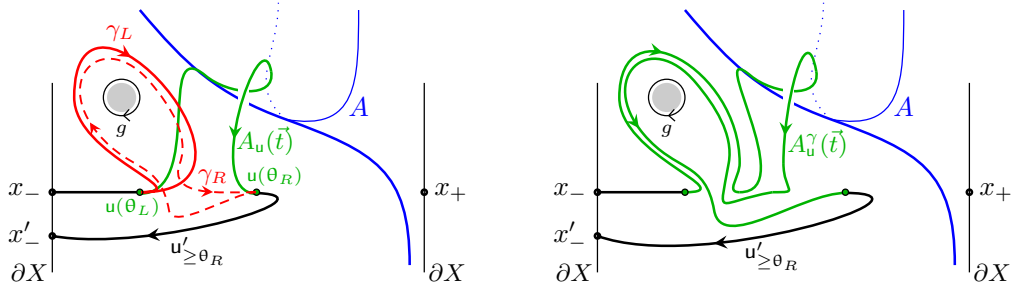

        \centering
        \includestandalone[mode=buildmissing,width=0.9\linewidth]{figures/fig-gu-whiskered}
        \caption{\emph{Left:} A loop $\gamma_L$ {\color{black}based at $\u(\mytheta_L)$}, and its pushoff $\gamma_R$ {\color{black}based at $\u(\mytheta_R)$}. \emph{Right:} A $\u_-$-whiskered representative $A^\gamma_{\u_-}(\vec{t})$ of $ga$ consist of the old arc $A_{\u_-}(\vec{t})$ and the ``new whisker'', made of $\gamma_L$ and $\gamma_R^{-1}$.}
        \label{fig:gu-whiskered}
    \end{figure}
    Firstly, we can represent $g\in\pi_1X$ by $\gamma=\u_{\leq\mytheta_L}\cdot\gamma_L\cdot\u_{\leq\mytheta_L}^{-1}$ for an embedded loop $\gamma_L$ based at $\u(\mytheta_L)$, such that $\gamma_L\times pr_2\colon \D^1\times \I^{d-2}\to X\times \I^{d-2}$ intersects the interior of $A_{\u_-}$ transversely and in a finite number of points $(z_j,\vec{t}_j)\in X\times \I^{d-2}$. Similarly, let $\gamma_R$ be a copy of $\gamma_L$ based at $\mytheta_R$ instead, i.e.\ $\gamma_R\simeq\u|_{[\mytheta_L,\mytheta_R]}^{-1}\cdot\gamma_L\cdot\u|_{[\mytheta_L,\mytheta_R]}$, see the left part of Figure~\ref{fig:gu-whiskered}.
    We define $A^\gamma_{\u_-}(\vec{t})$ as
    \[
        \gamma_L\cdot A_{\u_-}(\vec{t})\cdot\gamma_R^{-1},
    \]
    modified into an immersed arc in the obvious way, see the right part of Figure~\ref{fig:gu-whiskered}. We claim that $A^\gamma_{\u_-}$ is a $\u_-$-whiskered representative of $ga$. Namely, recall that the action of $g\in\pi_1X$ on $a\in\pi_{d-1}X$ agrees with the conjugation action of $\pi_0\Omega X$ on $\pi_{d-2}\Omega X$ and note that $A^\gamma_{\u_-}$ is precisely homotopic to such a pointwise conjugate.

    Therefore, $\dax(g a)\coloneqq \Dax(F_{A^\gamma_{\u_-}})\in\Z[\pi_1X\sm1]$ is the sum of signed nontrivial double point loops associated to double points of the immersed arcs 
    \[
        F_{A^\gamma_{\u_-}}(\vec{t})\coloneqq\u_{\leq\mytheta_L}\cdot A^\gamma_{\u_-}(\vec{t})\cdot\u'_{\geq\mytheta_R},\quad \text{for }\vec{t}\in\I^{d-2}.
    \]
    Such points can be divided into two groups.
    \begin{enumerate}
    \item[(old)]
        Each double point $z_i$ of some $F_{A_{\u_-}}(\vec{t}_i)$ appears also in $F_{A^\gamma_{\u_-}}(\vec{t}_i)$.
    \item[(new)]
        % Each intersection of the new whisker with $A_{\u_-}(\vec{t})$ gives two double points $z_j^L,z_j^R$. 
        Each intersection point $z_j$ that contributes to $\lambda(a,g)$, gives a pair of intersection points: $z_j^L\in A_{\u_-}\cap\gamma_L$ and $z_j^R\in A_{\u_-}\cap\gamma_R^{-1}$, see Figure~\ref{fig:gu-final}.
    \end{enumerate}
    \begin{figure}[!htbp]
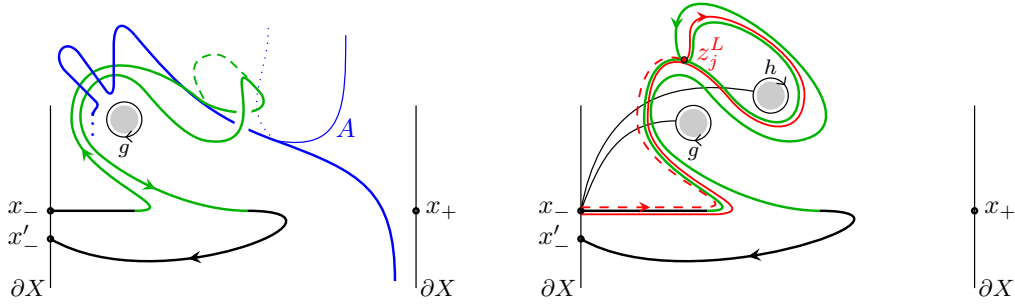

        \centering
        \includestandalone[mode=buildmissing,width=0.9\linewidth]{figures/fig-gu-final}
        \caption{\emph{Left:} For every intersection point of $a$ and $g$ there are two double points $z_j^L,z_j^R$ of $A^\gamma_{\u_-}$. Namely, the dashed arc (part of $A_{\u_-}(\vec{t})$) moves across the sphere $a$ and for some $\vec{t}$ it hits back into $\gamma_L$ and $\gamma_R$. \emph{Right:} The associated loop of $z_j^L$ goes along $\gamma$  {\color{black}(dashed)}, then jumps to $A^\gamma_{\u_-}(\vec{t})$  {\color{black}(solid red)}. It is homotopic to {\color{black}$h$}.}
        \label{fig:gu-final}
    \end{figure}
        In the first case the associated loop $g_{z_i}$ for $\dax(g a)$ is the one which contributed to $\dax(a)$ but now conjugated by~$g$, as the new whisker is used for both paths $F_{A^\gamma_{\u_-}}(\vec{t}_i)_{\leq\theta_i^-}$ and $F_{A^\gamma_{\u_-}}(\vec{t}_i)_{\leq\theta_i^+}$ that comprise $g_{z_i}$. This gives $g\dax(a)\ol{g}$, which is exactly the first term on the right hand side of \ref{eq:B}.

    In the second case, the associated Dax double point loops are respectively
    \begin{align*}
        g_{z_j^L}=[\lambda_{z_j}(\gamma_L)\cdot \lambda_{z_j}(A^\gamma_{\u_-})^{-1}]=[\lambda_{z_j}(\gamma,A^\gamma_{\u_-})],\\
        g_{z_j^R}=[\lambda_{z_j}(A^\gamma_{\u_-})\cdot\lambda_{z_j}(\gamma_R)^{-1}]=[\lambda_{z_j}(A^\gamma_{\u_-},\gamma)],
    \end{align*}
    where we used the definition of $\lambda$ from \eqref{eq:loop-def-lambda}. Thus, we will exactly obtain the two remaining terms of \ref{eq:B}, namely $\lambda(g,ga)=(-1)^{d-1}\lambda(ga,g)$ and $-\lambda(ga,g)$ respectively, once we check the signs.
    
    Locally around $z_j^R$ the first strand (the one with $\theta_-$) moves across $A^\gamma_{\u_-}$, while the second (the one with $\theta_+$) is constantly $\gamma^{-1}$, which is exactly the sign of $\varepsilon_{z_j^R}(A^\gamma_{\u_-}, \gamma^{-1})$ so the opposite of the sign for $\lambda(g a,g)$. On the other hand, for $z_j^L$ the first strand is constantly $\gamma$ and the second moves across $A_{\u_-}$, so indeed $\varepsilon^{\Dax}_{z_j^L}=\varepsilon_{z_j^L}(\gamma,A^\gamma_{\u_-})=(-1)^{d-1}\varepsilon_{z_j^L}(A^\gamma_{\u_-},\gamma)$. This completes the proof of~\ref{eq:B}.
\end{proof}

\begin{proof}[Proof of Corollary~\ref{cor:dax-compute}]
For the parts \ref{eq:cor1} and \ref{eq:cor2} see the paragraph after the proof of Theorem~\ref{thm:dax-compute}\ref{eq:A} above. The part ~\ref{eq:cor3} follows from
\begin{align*}
    \dax_{g\u}(g a)-g\dax_\u(a)\ol{g} 
    &=\dax(ga)+\lambdabar(ga,g\bm{\u})-
    g\big(\dax(a)+\lambdabar(a,\bm{\u})\big)\ol{g}\\
    &=\big(\dax(ga)-g\dax(a)\ol{g}\big)
    +\lambdabar(ga,g\bm{\u})-g\lambdabar(a,\bm{\u})\ol{g}\\
    &=\lambdabar(g,ga)-\lambdabar(ga,g)+\big(\lambdabar(ga,g)+g\lambdabar(a,\bm{\u})\ol{g}\big)-g\lambdabar(a,\bm{\u})\ol{g}\\
    &=\lambdabar(g,ga),
\end{align*}
where we applied \ref{eq:A} twice, reordered the terms, then used \ref{eq:B} and Lemma~\ref{lem:lambdabar}, and finally cancelled some terms. We prove~\ref{eq:cor4} by applying \ref{eq:cor2} and then \ref{eq:cor3}:
\begin{align*}
    \dax_{\u}(g a)
    &=\dax_{g\u}(ga)-\lambdabar(ga,g\bm{\u})+\lambdabar(ga,\bm{\u})\\
    &=g\dax_\u(a)\ol{g} + \lambdabar(g,ga)-\lambdabar(ga,g\bm{\u})+\lambdabar(ga,\bm{\u}).
    % \\&=g\dax_\u(a)\ol{g}+\lambda(ga,\bm{\u})(1-\ol{g})\;+\;\lambda(g,g a)-\lambda(g a,g).
\end{align*}
% A consistency check:
% \begin{align*}
%     \dax_{\u}(g a)
%     &=\dax(g a)+\lambda(ga,\bm{\u})\\
%     &=g\dax(a)\ol{g}\;+\;\lambda(g,g a)-\lambda(g a,g)+\lambda(ga,\bm{\u})\\
%     &=g\dax_\u(a)\ol{g}-g\lambda(a,\bm{\u})\ol{g}\;+\;\lambda(g,g a)-\lambda(g a,g)+\lambda(ga,\bm{\u})\\
%     &=g\dax_\u(a)\ol{g}\;+\;\lambda(g,g a)-(\lambda(g a,g)-g\lambda(a,\bm{\u})\ol{g})+\lambda(ga,\bm{\u})\\
%     &=g\dax_\u(a)\ol{g}\;+\;\lambda(g,g a)-\lambda(g a,g\bm{\u})+\lambda(ga,\bm{\u}).
% \end{align*}
Finally, ~\ref{eq:cor5} follows from~\ref{eq:cor4}, \ref{eq:cor1} and Lemma~\ref{lem:lambdabar}.
\end{proof}

%%%%%%%%%%%%%%%%%%%%%%%%%%%%%%%
\section{Knotted circles}\label{sec:circles}
In this section we study the space $\Emb(\S^1,N)$ of smooth embeddings of a circle into a compact manifold $N$ of dimension $d\geq3$ (possibly with $\partial N\neq\emptyset$). The inclusion into the space of immersions  $\incl_N\colon\Emb(\S^1,N)\hra\Imm(\S^1,N)$ induces for any basepoint $\s\in\Emb(\S^1,N)$ isomorphisms 
\begin{equation}\label{eq:stable-range-circles}
    \pi_n\incl_N\colon\pi_n(\Emb(\S^1,N),\s)\xrightarrow{\cong}\pi_n(\Imm(\S^1,N),\s),\quad 0\leq n\leq d-4,
\end{equation}
similarly as in \eqref{eq:arcs-stable-range} for arcs.
Thus, the lowest homotopy group potentially distinguishing embedded from immersed circles is again $\pi_{d-3}$, and $\ker(\pi_{d-3}\incl_N)$ is isomorphic to the cokernel of the connecting map $\delta_{\incl_N}\colon\pi_{d-2}(\Imm(\S^1,N),\s)\to\pi_{d-2}(\Imm(\S^1,N),\Emb(\S^1,N),\s)$.

The first part of Theorem~\ref{thm:circles-main} is a computation of $\pi_n(\Imm(\S^1,N),\s)$ for $n\leq d-3$, which we carry out in Section~\ref{subsec:immersed-circles}. In Section~\ref{subsec:circles-to-arcs} we relate the space $\Emb(\S^1,N)$ to $\Emb_\partial(\D^1,N\sm D^d)$, and use this to prove Theorem~\ref{thm:circles-main}\ref{thm:circles-mainII} which computes $\ker(\pi_{d-3}\incl_N)$ (restated as Theorem~\ref{thm:circles-main2}), see Section~\ref{subsec:computing-the-kernel}.

\subsection{Immersed circles}\label{subsec:immersed-circles}
Let $\S(N)$ denote the unit sphere subbundle of the tangent bundle of $N$, and $\Lambda\S(N)\coloneqq\Map(\S^1,\S(N))$ its free loop space, i.e.\ the space of all (nonbased) loops in $\S(N)$. Fix a neighbourhood $[e,e+\varepsilon]\subseteq\S^1$ of a point $e\in\S^1$ and en embedding $\ol{e}\colon[e,e+\varepsilon]\hra N$.
\begin{prop}\label{prop:free-loops}
    Taking unit derivatives gives a weak homotopy equivalence of fibration sequences
    \[
    \begin{tikzcd}
        r^{-1}(\ol{e})\ar[hook]{r}{}\ar[]{d}{\simeq}[swap]{\deriv} & \Imm(\S^1,N)\ar[]{r}{r}\ar[]{d}{\simeq}[swap]{\deriv}  & \Imm([e,e+\varepsilon],N)\ar[]{d}{\simeq}[swap]{\deriv_e}\\
        \Omega\S(N)\ar[hook]{r}{} & \Lambda\S(N)\ar[]{r}{\ev_e}  & \S(N)
    \end{tikzcd}
    \]
    where $r$ restricts an immersion to $[e,e+\varepsilon]\subseteq\S^1$ and $\ev_e$ evaluates a map at $e$. 
    Moreover, in the bottom sequence the connecting map sends $\gamma\in\Omega\S(N)$ to the concatenated loop $\gamma\cdot\deriv\s\cdot\gamma^{-1}{\color{black}\in}\Omega\S(N)$.
\end{prop}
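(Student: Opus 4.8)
The statement has three parts: the bottom fibration sequence, the claim that the vertical derivative maps are weak equivalences compatibly with the horizontal maps, and the identification of the connecting map. I would organise the argument accordingly.

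First, I would set up the bottom row. The evaluation map $\ev_e\colon\Lambda\S(N)\to\S(N)$ is a fibration (it is the restriction-to-a-point map for the free loop space, a standard Hurewicz fibration), and its fibre over a point $v\in\S(N)$ is exactly the space of loops in $\S(N)$ based at $v$, i.e.\ $\Omega\S(N)$ with that basepoint. So the bottom row is a genuine fibration sequence by construction, and no work is needed there beyond citing the standard fact.

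Second, for the top row I would observe that $r\colon\Imm(\S^1,N)\to\Imm([e,e+\varepsilon],N)$ is a fibration by the parametrised isotopy extension / restriction theorem for immersions (restriction maps of mapping-type spaces along a cofibration of source manifolds are fibrations), with fibre $r^{-1}(\ol e)$ the immersions of $\S^1$ that restrict to the fixed embedding $\ol e$ on $[e,e+\varepsilon]$ — this is an ``immersed long knot'' space. Then the vertical maps are the unit-derivative maps. The rightmost one, $\deriv_e\colon\Imm([e,e+\varepsilon],N)\to\S(N)$, is a weak equivalence because $[e,e+\varepsilon]$ is contractible: an immersion of an interval is determined up to contractible choice by its $1$-jet at one point, and in fact this space deformation retracts onto the space of affine immersions, which is $\S(N)$ (one can see this directly, or invoke Hirsch–Smale immersion theory, which identifies $\Imm([e,e+\varepsilon],N)$ with the space of bundle monomorphisms $T[e,e+\varepsilon]\to TN$, and the latter evaluates equivalently by the $1$-jet at $e$). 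The middle vertical map $\deriv\colon\Imm(\S^1,N)\to\Lambda\S(N)$ is the Hirsch–Smale map sending an immersion to its unit tangent loop; by the Smale–Hirsch theorem it is a weak equivalence (for $\S^1$, whose tangent bundle is trivial, this is the original theorem of Smale, and it is the key input — already invoked in the excerpt via Smale's theorem for arcs). The left vertical map $\deriv$ is the induced map on fibres; that it is a weak equivalence then follows from the five-lemma applied to the long exact sequences of homotopy groups of the two fibration sequences, once commutativity of the two squares is checked. Commutativity of the right square is essentially the definition of the maps ($\deriv_e\circ r$ reads off the $1$-jet at $e$ of the restricted immersion, which is the same as $\ev_e\circ\deriv$); commutativity up to homotopy of the left square is automatic. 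So this part is a diagram chase feeding on Smale–Hirsch.

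Third, the connecting map. For the bottom fibration $\Omega\S(N)\hra\Lambda\S(N)\xrightarrow{\ev_e}\S(N)$ the connecting homomorphism $\partial\colon\pi_{n+1}(\S(N))\to\pi_n(\Omega\S(N))$ is classical: up to the identification $\pi_n(\Omega\S(N))\cong\pi_{n+1}(\S(N))$ it is given by the ``based vs.\ free loop'' comparison, and concretely, if one uses the point $\deriv\s(e)\in\S(N)$ as basepoint and the loop $\deriv\s\in\Lambda\S(N)$ as the basepoint of $\Lambda\S(N)$, then the monodromy of the free loop fibration around a based loop $\gamma$ in $\S(N)$ is conjugation $\delta\mapsto\gamma\cdot\delta\cdot\gamma^{-1}$ on $\Omega\S(N)$. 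I would spell this out by writing down an explicit lift: given a path $t\mapsto v_t$ in $\S(N)$ from the basepoint to itself (a loop $\gamma$), lift it to the path of free loops $t\mapsto \gamma_{[0,t]}\cdot\deriv\s\cdot\gamma_{[0,t]}^{-1}$ in $\Lambda\S(N)$ starting at $\deriv\s$; at $t=1$ this is $\gamma\cdot\deriv\s\cdot\gamma^{-1}$, which is the asserted image of the connecting map. (I will write $\gamma^{-1}$ for the reverse path, matching the paper's conventions, and note the harmless typo ``$\in\in$'' in the statement is just a doubled symbol.) This is the only genuinely computational step, and it is short.

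\textbf{Main obstacle.} The only nontrivial input is the Smale–Hirsch immersion theorem identifying $\deriv\colon\Imm(\S^1,N)\xrightarrow{\simeq}\Lambda\S(N)$ and $\deriv_e\colon\Imm([e,e+\varepsilon],N)\xrightarrow{\simeq}\S(N)$; everything else is formal (fibration properties of restriction maps, the five-lemma, and the standard monodromy computation for a free loop fibration). Since Smale's theorem is already being used in the excerpt, there is no real difficulty — the main care needed is just bookkeeping the basepoints and the compatibility of the two squares so that the five-lemma applies on the nose.
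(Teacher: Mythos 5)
Your overall strategy matches the paper's: use Smale to identify the right‑hand and one other vertical map as weak equivalences, deduce the third from the map of fibration long exact sequences, and compute the connecting map by an explicit path‑lift of $\gamma$ in $\Lambda\S(N)$. The paper applies Smale's theorem to the \emph{left} vertical map (via the space of based immersions $\Imm_*(\S^1,N)=(\deriv_e\circ r)^{-1}(\deriv_e\ol e)$, which is weakly equivalent to $r^{-1}(\ol e)$ because $\deriv_e$ has contractible fibres) and then deduces the middle map; you do it in the opposite order, applying Smale–Hirsch directly to $\Imm(\S^1,N)\to\Lambda\S(N)$ and deducing the left map by the five‑lemma. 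Both directions are fine.

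The one genuine problem is in the explicit lift for the connecting map. Your formula $t\mapsto\gamma_{[0,t]}\cdot\deriv\s\cdot\gamma_{[0,t]}^{-1}$ does not typecheck as a concatenation ($\gamma_{[0,t]}$ ends at $\gamma(t)$, while $\deriv\s$ is based at $\gamma(0)=\gamma(1)$) and, however one repairs it, $\ev_e$ of it is not $\gamma(t)$, so it is not a lift of $\gamma$. If you use the repaired $t=0$‑starting lift $t\mapsto \gamma_{[0,t]}^{-1}\cdot\deriv\s\cdot\gamma_{[0,t]}$, which is the correct lift with your convention, then at $t=1$ you land on $\gamma^{-1}\cdot\deriv\s\cdot\gamma$, not the claimed $\gamma\cdot\deriv\s\cdot\gamma^{-1}$ — and these are genuinely different elements of $\pi_1$ unless $\gamma$ is central. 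The paper uses the opposite convention (prescribe the lift at $t=1$ to be $\deriv\s$ and read off the value at $t=0$, with lift $t\mapsto\gamma|_{[t,1]}\cdot\deriv\s\cdot\gamma|_{[t,1]}^{-1}$), and this choice is made deliberately, with a footnote explaining that the $t=0$ convention would yield $\gamma^{-1}\cdot\deriv\s\cdot\gamma$ and force the commutator convention $[a,b]=a^{-1}b^{-1}ab$ downstream. So you have the right picture ("monodromy of the free‑loop fibration is conjugation"), but to land exactly on the paper's formula you need to either switch to the paper's lifting convention or accept $\gamma^{-1}\cdot\deriv\s\cdot\gamma$ and adjust the commutator conventions that depend on it later in the argument.
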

\begin{proof}
    The map $\deriv_e\colon\Imm([e,e+\varepsilon],N)\to\S(N)$ taking the unit derivative at $e\in\S^1$ is a weak equivalence, since the arc can be arbitrarily shorten (see~\cite[Lem.~6.2]{Smale}). The restriction map $r$ is a Serre fibration by Smale~\cite{Smale}, so the fibre $r^{-1}(\ol{e})$ over any basepoint is weakly equivalent to the fibre $\Imm_*(\S^1,N)\coloneqq (\deriv_e\circ r)^{-1}(\ol{e})$, the space of \emph{based immersions}. Now, Smale also shows that $\deriv\colon\Imm_*(\S^1,N)\to\Omega\S(N)$ is a weak equivalence. Thus, in the above diagram the leftmost vertical map $\deriv$ is also a weak equivalence, implying the middle map $\deriv$ is as well.
    
    For the last claim, first note that the connecting map goes from the space $\Omega\S(N)$ based at $\const_*$ to $\Omega\S(N)$ based at $\deriv\s$. By definition, it takes a loop $\gamma$ in $\S(N)$, lifts it (using the homotopy lifting property of $\ev_e$) to a path in $\Lambda\S(N)$ which at time $t=1$ equals $\deriv\s$, and then evaluates at the other endpoint\footnote{We use this convention as we prefer to define the group commutator by $[a,b]=aba^{-1}b^{-1}$. If lifts would start at $t=0$, we would get $\gamma^{-1}\cdot\deriv\s\cdot\gamma$, so we would have to use $a^{-1}b^{-1}ab$ instead.} $t=0$. We simply observe that the path $t\mapsto\gamma|_{[t,1]}\cdot\deriv\s\cdot\gamma|_{[t,1]}^{-1}$ is one such lift, since it equals $\deriv\s$ for $t=1$, and for each $t$ its value at $e$ is $\gamma(t)$.
\end{proof}
Corresponding to a basepoint $\s\in\Imm(\S^1,N)$ is the point $\deriv\s\in\Lambda\S(N)$. Note that it is important to keep track of basepoints, since the components of $\Lambda\S(N)$ are in general not even homotopy equivalent (so neither are components of $\Imm(\S^1,N)$). However, the components of $\Omega\S(N)$ are: the postconcatenation $-\cdot(\deriv\s)^{-1}\colon\Omega\S(N)\to\Omega\S(N)$, $\gamma\mapsto\gamma\cdot(\deriv\s)^{-1}$ is a homotopy equivalence, with an obvious inverse $\gamma\mapsto\gamma\cdot(\deriv\s)$ (note that $(\deriv\s)^{-1}\neq\deriv(\s^{-1})$). 

Thus, writing $(-\cdot(\deriv\s)^{-1})\circ\deriv=(\deriv-)\cdot(\deriv\s)^{-1}$ we have a diagram of fibration sequences
    \begin{equation}\label{eq:imm-fib-seq}
    \begin{tikzcd}
        r^{-1}(\ol{e})\ar[hook]{r}{}\ar[]{d}{\simeq}[swap]{(\deriv-)\cdot(\deriv\s)^{-1}} & \Imm(\S^1,N)\ar[]{r}{r}\ar[]{d}{\simeq}[swap]{\deriv}  & \Imm([e,e+\varepsilon],N)\ar[]{d}{\simeq}[swap]{\deriv_e}\\
        \Omega\S(N)\ar{r}{\cdot(\deriv\s)} & \Lambda\S(N)\ar[]{r}{\ev_e}  & \S(N)
    \end{tikzcd}
    \end{equation}
so that now the basepoint of $\Omega\S(N)$ is $\const_*$, and the bottom connecting map is $\gamma\mapsto \gamma\cdot{\deriv\s}\cdot\gamma^{-1}\cdot{\deriv\s}^{-1}$. 

Using the canonical isomorphisms $\pi_n(\Omega\S(N),\const_*)\cong\pi_{n+1}(\S(N),e)$ and the correspondence of Samelson and Whitehead products (see, for example, \cite[App.B]{K-thesis-paper}), we view this connecting map as the self-map of $\pi_{n+1}\S(N)$ given by
\[
    a\mapsto[a,\bm{\deriv\s}]_W=\begin{cases} 
        a\,(\bm{\deriv\s}\,a)^{-1},  & n=0,\\
        a-\bm{\deriv\s}\,a,          & n>0.
    \end{cases}
\]
This is the Whitehead product with the homotopy class $\bm{\deriv\s}\in\pi_1\S(N)$ of $\deriv\s$. As before, $\bm{\deriv\s}\,a$ denotes the usual action of $\bm{\deriv\s}\in\pi_1\S(N)$ on $a\in\pi_n\S(N)$ (``the change of whisker''). For $n=0$ this is the conjugation action, so the formula $a\,(\bm{\deriv\s}\,a)^{-1}$ is exactly the commutator $[a,\bm{\deriv\s}]\in\pi_1\S(N)$.

\begin{cor}\label{cor:imm-circles-htpy-gps}
   There is a bijection $\pi_0\Imm(\S^1,N)\cong\faktor{\pi_1\S(N)}{[a,x]\sim1}$ and group extensions
    \[\begin{tikzcd}
        \faktor{\pi_{n+1}\S(N)}{\langle a-\bm{\deriv\s}\,a\rangle}\ar[tail]{r} &  \pi_n(\Imm(\S^1,N),\s)\ar[two heads]{r} & \big\{b\in\pi_n\S(N)\mid b=\bm{\deriv\s}\,b\big\}.
    \end{tikzcd}\]
    for all $n\geq1$.
    Moreover, for $n\leq d-3$ the space $\S(N)$ can be replaced by $N$ and $\bm{\deriv\s}$ by $\bm{\s}\in\pi_1N$.
\end{cor}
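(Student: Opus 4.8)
The statement is essentially a bookkeeping consequence of the fibration sequence \eqref{eq:imm-fib-seq} together with the identification of the bottom connecting map as the Whitehead product $[-,\bm{\deriv\s}]_W$ on $\pi_{n+1}\S(N)$. First I would write down the long exact sequence of homotopy groups (and pointed sets, at the bottom end) for the bottom fibration $\Omega\S(N)\to\Lambda\S(N)\to\S(N)$ based at $\deriv\s$, after transporting to the basepoint $\const_*$ on the fibre as in \eqref{eq:imm-fib-seq}. Since $\deriv_e$ and $\deriv$ are weak equivalences, this sequence agrees with the one for $r^{-1}(\ol e)\to\Imm(\S^1,N)\to\Imm([e,e+\varepsilon],N)$, and the base is contractible (it deformation retracts onto $\S(N)$ which is a manifold but the point is $\Imm([e,e+\varepsilon],N)\simeq\S(N)$ is connected with the relevant homotopy groups of $\S(N)$, so actually I should be careful: the base is $\S(N)$, not contractible). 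Let me restate: the LES reads
\[
    \cdots\to\pi_{n+1}\S(N)\xrightarrow{[-,\bm{\deriv\s}]_W}\pi_{n+1}\S(N)\to\pi_n(\Imm(\S^1,N),\s)\to\pi_n\S(N)\xrightarrow{[-,\bm{\deriv\s}]_W}\pi_n\S(N)\to\cdots
\]
where I have used $\pi_n\Lambda\S(N)\cong\pi_n\Omega\S(N)\rtimes$(nothing)---more precisely the relevant fact is that $\ev_e$ admits a section (constant loops), so each map $\pi_n\Lambda\S(N)\to\pi_n\S(N)$ is split surjective, hence the LES breaks into short exact sequences
\[
    \coker\big([-,\bm{\deriv\s}]_W\colon\pi_{n+1}\S(N)\to\pi_{n+1}\S(N)\big)\hookrightarrow\pi_n(\Imm(\S^1,N),\s)\twoheadrightarrow\ker\big([-,\bm{\deriv\s}]_W\colon\pi_n\S(N)\to\pi_n\S(N)\big).
\]
For $n\geq1$ the Whitehead product $[-,\bm{\deriv\s}]_W$ is the additive map $a\mapsto a-\bm{\deriv\s}\,a$, so its cokernel is $\pi_{n+1}\S(N)/\langle a-\bm{\deriv\s}\,a\rangle$ and its kernel is $\{b\in\pi_n\S(N)\mid b=\bm{\deriv\s}\,b\}$, giving exactly the asserted extension. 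For $n=0$, I would argue directly: $\pi_0\Imm(\S^1,N)=\pi_0\Lambda\S(N)$ is the set of free homotopy classes of loops in $\S(N)$, i.e.\ conjugacy classes of $\pi_1\S(N)$, which is the quotient $\pi_1\S(N)/(a\sim xax^{-1})=\pi_1\S(N)/\langle[a,x]\sim1\rangle$.

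The key structural input---that the connecting map is the Whitehead product with $\bm{\deriv\s}$---is already established in the paragraph preceding the corollary (via the Samelson--Whitehead correspondence and the computation that the bottom connecting map of \eqref{eq:imm-fib-seq} is $\gamma\mapsto\gamma\cdot\deriv\s\cdot\gamma^{-1}\cdot\deriv\s^{-1}$), so that step I may simply cite. The only genuine verification is that $\ev_e\colon\Lambda\S(N)\to\S(N)$ has a section, hence the LES splits into short exact sequences; this is immediate since the inclusion of constant loops $\S(N)\hookrightarrow\Lambda\S(N)$ is a section of $\ev_e$. Finally, for the last sentence of the corollary, I would invoke that $\S(N)\to N$ is a fibre bundle with fibre $\S^{d-1}$, which is $(d-2)$-connected; hence $\pi_k\S(N)\to\pi_kN$ is an isomorphism for $k\leq d-2$ and surjective for $k=d-1$. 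Thus for $n\leq d-3$ both $\pi_{n+1}\S(N)\cong\pi_{n+1}N$ (as $n+1\leq d-2$) and $\pi_n\S(N)\cong\pi_nN$, compatibly with the $\pi_1$-actions and with sending $\bm{\deriv\s}$ to its image $\bm{\s}\in\pi_1N$ (for this last identification note $\pi_1\S(N)\cong\pi_1N$ already when $d\geq3$, as $d-1\geq2$ makes the fibre $\S^{d-1}$ simply connected, and the unit tangent vector of $\s$ projects to $\s$).

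The main obstacle---such as it is---is purely notational care at the unstable/low-degree boundary: making sure the $n=0$ case is phrased correctly as a statement about pointed sets and conjugacy classes rather than groups, and that the splitting via constant loops is used to conclude that one really gets short exact sequences (extensions) rather than merely a six-term exact sequence. There is no hard geometry here; everything reduces to Smale--Hirsch theory (already quoted in Proposition~\ref{prop:free-loops}), the elementary long exact sequence of a fibration with a section, and the connectivity of the sphere bundle.
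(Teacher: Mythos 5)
Your overall approach is the same as the paper's (apply the long exact sequence of the fibration $\Omega\S(N)\to\Lambda\S(N)\xrightarrow{\ev_e}\S(N)$ from \eqref{eq:imm-fib-seq}, identify the connecting map as the Whitehead product, use the $(d-1)$-connectivity of $\S(N)\to N$ for the last sentence), but there is a genuine error in one step.

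You claim that ``the relevant fact is that $\ev_e$ admits a section (constant loops), so each map $\pi_n\Lambda\S(N)\to\pi_n\S(N)$ is split surjective, hence the LES breaks into short exact sequences.'' This is not right. The section $x\mapsto\const_x$ is a section of $\ev_e$ as an unbased map, but the basepoint of $\Lambda\S(N)$ in \eqref{eq:imm-fib-seq} is $\deriv\s$, which is a constant loop only when $\s$ is nullhomotopic; so the section is not basepoint-preserving in general, and $\pi_n\Lambda\S(N)\to\pi_n\S(N)$ is \emph{not} split surjective, nor even surjective. In fact your two assertions contradict each other: if $\pi_n\Lambda\S(N)\to\pi_n\S(N)$ were surjective, then by exactness the connecting map $\partial_n$ would vanish, forcing $\ker\partial_n=\pi_n\S(N)$ rather than the equalizer $\{b\mid b=\bm{\deriv\s}\,b\}$ that you (correctly) wrote in the displayed short exact sequence. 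This is precisely why the extension in Theorem~\ref{thm:circles-main}(I) is asserted to split only when $\s$ is nullhomotopic; the paper records this in Remark~\ref{rem:nullhomotopic-has-section}, noting explicitly that the constant-loop section is basepoint-preserving exactly when $\deriv\s$ can be replaced by a constant loop.

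The correct justification for the extension is simpler and requires no section: for \emph{any} fibration, elementary exactness of the long exact sequence gives
\[
\coker\bigl(\partial_{n+1}\colon\pi_{n+1}B\to\pi_n F\bigr)\;\hookrightarrow\;\pi_n E\;\twoheadrightarrow\;\ker\bigl(\partial_n\colon\pi_n B\to\pi_{n-1}F\bigr),
\]
since $\im(\pi_nE\to\pi_nB)=\ker\partial_n$ and $\ker(\pi_nE\to\pi_nB)=\im(\pi_nF\to\pi_nE)\cong\coker\partial_{n+1}$. Applying this with $\partial_n=[-,\bm{\deriv\s}]_W$ gives the stated extension directly. The rest of your proof (the $n=0$ case via free homotopy classes of loops, the identification of the connecting map via Samelson--Whitehead, and the $(d-1)$-connectivity argument for replacing $\S(N)$ by $N$) is correct and matches the paper.
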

The first part follows from the long exact sequence in homotopy groups and \eqref{eq:imm-fib-seq}, noting that the set $\pi_0\Lambda\S(N)$ is in bijection with the set of orbits of the action of $\pi_1\S(N)$ on itself via the connecting map $a\mapsto[a,x]$ for varying $x=\bm{\deriv\s}$. The last sentence follows since the projection $\S(N)\to N$ is $(d-1)$-connected (as the fibre is $\S^{d-1})$.

\begin{remark}\label{rem:nullhomotopic-has-section}
    If $\bm{\deriv\s}=1\in\pi_1\S(N)$ (equivalently $\bm{\s}=1\in\pi_1N$), we have split extensions
    \[\begin{tikzcd}
        \pi_{n+1}\S(N)\ar[tail]{r} & \pi_n(\Imm(\S^1,N);1)\ar[two heads,shift left]{r} & \pi_n\S(N).\ar[shift left]{l}
    \end{tikzcd}
    \] 
    The splitting comes from the fact that $\ev_e$ has a section, sending $x\in\S(N)$ to the constant loop $\const_x$. Note that this section is basepoint preserving if and only if we choose a constant loop for a basepoint of $\Lambda\S(N)$ (which we can do in this case since $\s$ is nullhomotopic).
    % However, our basepoint $\deriv \s$ of $\Omega\S(N)$ can be arbitrary.
\end{remark}

\subsection{Reducing circles to arcs}\label{subsec:circles-to-arcs}
We next give a fibration for embedded circles analogous to \eqref{eq:imm-fib-seq}.

\begin{prop}\label{prop:closed-to-boundary}
    Taking the unit derivative at $e\in\S^1$ gives a fibration sequence
    \[\begin{tikzcd}
        \Arcs{N\sm D^d}\ar[]{r}{-\cdot \ol{e}} & \Emb(\S^1,N)\ar[]{r}{\deriv_e} & \S(N),
    \end{tikzcd}
    \]
    where the first map glues together along the boundary the neat arc $\ol{e}\colon[e,e+\varepsilon]\hra D^d\subseteq N$ with the given neat arc $K\colon\D^1=\S^1\sm[e,e+\varepsilon]\hra N\sm D^d$.
\end{prop}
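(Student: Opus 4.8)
The plan is to realise this as the fibre sequence of the $1$-jet evaluation map $\deriv_e\colon\Emb(\S^1,N)\to\S(N)$, $K\mapsto\big(K(e),K'(e)/|K'(e)|\big)$, exactly in the spirit of the proof of Proposition~\ref{prop:free-loops} for immersions. Two things must be checked: that $\deriv_e$ is a Serre fibration, and that its fibre over the $1$-jet $j^1_e\ol e$ of $\ol e$ at $e$ is homotopy equivalent, through the gluing map $-\cdot\ol e$, to $\Arcs{N\sm D^d}$. The asserted sequence then follows from the long exact sequence in homotopy groups, using that $\deriv_e\circ(-\cdot\ol e)$ is the \emph{constant} map at $j^1_e\ol e$ (each $K\cdot\ol e$ coincides with $\ol e$ near $e$).

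\textbf{$\deriv_e$ is a fibration.} I would argue as in Proposition~\ref{prop:free-loops}: factor $\deriv_e$ as the restriction $r\colon\Emb(\S^1,N)\to\Emb([e,e+\varepsilon],N)$ followed by $\deriv_e\colon\Emb([e,e+\varepsilon],N)\to\S(N)$. The first map is a Serre fibration since restriction maps between embedding spaces are (Palais--Cerf local triviality, equivalently the parametrised isotopy extension theorem); the second is a fibre bundle, with local triviality supplied by exponential charts on $N$ and with star-shaped — hence contractible — fibres, being the embedded analogue of the weak equivalence $\deriv_e$ of Proposition~\ref{prop:free-loops}. A composite of Serre fibrations is a Serre fibration. (One could instead verify the homotopy lifting property for $\deriv_e$ directly, since any cube of $1$-jets at $e$ is realised by an ambient isotopy of $N$ starting from a prescribed $K$.)

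\textbf{The fibre is $\Arcs{N\sm D^d}$.} Let $F\coloneqq\deriv_e^{-1}(j^1_e\ol e)$, the space of $K\in\Emb(\S^1,N)$ agreeing with $\ol e$ to first order at $e$. First, a strong deformation retraction of $F$ onto $F_1\coloneqq\{K\mid K|_{[e,e+\varepsilon]}=\ol e\}$ is produced by the standard germ-shrinking argument (``an arc can be arbitrarily shortened'', as in Smale and Proposition~\ref{prop:free-loops}): in a chart around $\ol e(e)$ in which $\ol e$ is a straight segment, rescale the germ of $K$ at $e$ towards its linearisation while interpolating back to $K$ outside a shrinking neighbourhood of $e$; this is continuous in $K$ and fixes $F_1$. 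Second, $F_1$ deformation retracts onto $F_2\coloneqq\{K\mid K(\S^1)\cap D^d=\ol e([e,e+\varepsilon])\}$: after a preliminary parametrised perturbation making $K|_{\D^1}$ transverse to $\partial D^d$ (here $\D^1\coloneqq\S^1\sm(e,e+\varepsilon)$, with endpoints $\ol e(e),\ol e(e+\varepsilon)\in\partial D^d$), the components of $(K\cap D^d)\sm\ol e$ are disjoint neat arcs in the ball $D^d$, each unknotted because $d\geq3$, and may be pushed out across $\partial D^d$ one innermost arc at a time, decreasing $\#(K|_{\D^1}\cap\partial D^d)$ to zero. Finally, restriction to $\D^1$ is a homeomorphism $F_2\xrightarrow{\;\cong\;}\Arcs{N\sm D^d}$ (matching endpoints and endpoint-germs with those of $\ol e$, which we take as the fixed boundary data on $N\sm D^d$), and its inverse is exactly $-\cdot\ol e$. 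Hence $F\simeq\Arcs{N\sm D^d}$ and the fibre inclusion $F\hookrightarrow\Emb(\S^1,N)$ is homotopic to $-\cdot\ol e$, which completes the proof.

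\textbf{Main obstacle.} The fibration half is routine. The real work is making the two retractions \emph{continuous and natural in $K$}: the germ-shrinking must be carried out uniformly through a single chart-based rescaling, and the ``push out of $D^d$'' step needs a parametrised general-position / innermost-disc argument — this is the only place $d\geq3$ is used, and it is the step one should write out with care. Everything else (the corner-smoothing where $\ol e$ meets $K$ at $e$ and $e+\varepsilon$, matching basepoints and the fixed boundary data so that $-\cdot\ol e$ is literally the fibre inclusion up to homotopy) is bookkeeping.
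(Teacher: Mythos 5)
Your overall strategy — factor $\deriv_e$ through the restriction map $r\colon\Emb(\S^1,N)\to\Emb([e,e+\varepsilon],N)$, invoke Cerf--Palais for $r$, shrink germs to reduce the fibre to embeddings agreeing with $\ol e$ on all of $[e,e+\varepsilon]$, and then retract onto embeddings missing the ball — matches the paper's. The germ-shrinking step is exactly the paper's (Smale-style) argument. The divergence, and the gap, is in the last step.

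You propose to make $K|_{\D^1}$ transverse to $\partial D^d$ by a generic perturbation and then remove the resulting arcs of $K\cap D^d$ ``one innermost arc at a time.'' This cannot be promoted to a (deformation) retraction: both the choice of perturbation and the innermost-arc combinatorics depend discontinuously on $K$ — the number of intersection arcs with $\partial D^d$ jumps as $K$ varies, and no choice of ``innermost'' ordering varies continuously across such jumps — so the procedure does not define a map $F_1\to F_2$, let alone one homotopic to the identity. Moreover, the claim that each arc of $K\cap D^d$ is ``unknotted because $d\geq3$'' is false when $d=3$: a neat arc in a $3$-ball may be locally knotted, or linked with $\ol e$, in which case there is no disjoint push-out disc. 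Since the paper needs Proposition~\ref{prop:closed-to-boundary} also for $d=3$ (it underlies Theorem~\ref{thm:circles-3d}), this is not merely a loss of generality. The paper sidesteps all of this with a single \emph{canonical} deformation: fix a tubular neighbourhood $\nu_{\ol e}\cong\im\ol e\times\D^{d-1}_\epsilon$ of $\ol e$, take a vector field on the punctured normal discs $\ol e(p)\times(\D^{d-1}_\epsilon\setminus\{0\})$ pointing radially away from the core (vanishing near $\partial\nu_{\ol e}$), and integrate it to get a smooth map $\varphi$ pushing $\nu_{\ol e}\setminus\im\ol e$ into the outer shell. Because $K$ is an \emph{embedding} agreeing with $\ol e$ on $[e,e+\varepsilon]$, the arc $K|_{\D^1}$ is automatically disjoint from $\im\ol e$, so $\varphi\circ K$ is well defined and lands in the subspace of embeddings missing a smaller tubular neighbourhood of $\ol e$, continuously in $K$ and in any dimension. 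Replacing your transversality-plus-innermost-arc argument with this radial ambient flow is the repair: no general position, no dimension restriction, and continuity is manifest.
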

The boundary condition for $K\in\Arcs{N\sm D^d}$ is given by the derivative of $\ol{e}$ at the boundary, so that gluing them gives a smooth embedding $K\cdot\ol{e}\colon\S^1\hra N$.
\begin{proof}
    The restriction map $r\colon\Emb(\S^1,N)\to\Emb([e,e+\varepsilon],N)$ is a fibration by the Cerf--Palais theorem \cite{Cerf-plongements,Palais}. For the base space of this fibration we have weak homotopy equivalences $\deriv_e\colon\Emb([e,e+\varepsilon],N)\simeq\Imm([e,e+\varepsilon],N)\simeq\S(N)$, since the shrinking from the argument in the proof of Proposition~\ref{prop:free-loops} can be done through embeddings. 
    
    Finally, for the fibre $r^{-1}(\ol{e})$ over the basepoint $\ol{e}\colon[e,e+\varepsilon]\hra N$, we claim that there is an equivalence $r^{-1}(\ol{e})\simeq\Arcs{N\sm D^d}$. We claim that there is a retraction of $r^{-1}(\ol{e})$ onto its subspace consisting of those $K\colon\S^1\hra N$ which intersect an open tubular neighbourhood $\nu_{\ol{e}}\coloneqq \im\ol{e}\times\D^{d-1}_{\epsilon}\cong\D^{d}\subseteq N$ \emph{precisely} in $K([e,e+\varepsilon])=\im\ol{e}$, since then we will have
    \[
        r^{-1}(\ol{e})\simeq\Emb_\partial(\S^1\sm[e,e+\varepsilon],N\sm\nu_{\ol{e}})\cong\Arcs{N\sm D^d}.
    \] 
    The retraction can be constructed by integrating a vector field defined on each punctured normal disk $\ol{e}(p)\times(\D^{d-1}_{\epsilon}\sm\{0\})$ for $p\in[e,e+\varepsilon]$ by pointing radially outwards. This gives a homotopy from the identity map on $\nu_{\ol{e}}\sm\im\ol{e}$ to a smooth self-map $\varphi$ with the image $\varphi(\nu_{\ol{e}}\sm\im\ol{e})\subseteq\im\ol{e}\times(\D^{d-1}_{\epsilon}\sm\D^{d-1}_{\epsilon/3})$. Since $K\colon\S^1\hra N$ belongs to $r^{-1}(\ol{e})$ if it agrees with $\ol{e}$ on $[e,e+\varepsilon]$, does not intersect $\ol{e}$, the composite $\varphi\circ K$ is well defined and belongs to the desired subspace. 
\end{proof}
Let $\s\colon\S^1\hra N$ be the basepoint in $\Emb(\S^1,N)$, so that $\u\coloneqq \s|_{\S^1\sm[e,e+\varepsilon]}$ is the basepoint in the fibre $\Arcs{N\sm D^d}$, and $\u\cdot\ol{e}=\s$.

Using the inclusion $\incl_N\colon\Emb(\S^1,N)\hra\Imm(\S^1,N)$ we combine the last fibration sequence with those from \eqref{eq:imm-fib-seq}, to obtain a commutative diagram
\begin{equation}\label{eq:combining}
\begin{tikzcd}
    \Arcs{N\sm D^d}\ar[hook]{r}{-\cdot \ol{e}}\ar{d}[swap]{\jincl_N\deriv_\u} & \Emb(\S^1,N)\ar[]{r}{\deriv_e}\ar{d}[swap]{\deriv\incl_N} & \S(N)\ar[equals]{d}{}\\
    \Omega\S(N)\ar[]{r}{-\cdot(\deriv\s)} & \Lambda\S(N)\ar[]{r}{\ev_e}  & \S(N)
\end{tikzcd}
\end{equation}
Let us explain the label of the left vertical map. It is obtained by precomposing $(\deriv-)\cdot(\deriv\s)^{-1}$ from \eqref{eq:imm-fib-seq} with $-\cdot\ol{e}$. Since $\ol{e}=\u^{-1}\cdot \s$, this takes $K\in\Arcs{N\sm D^d}$ to
\begin{align*}
    \left((\deriv-)\cdot(\deriv\s)^{-1}\right)(K\cdot\ol{e})
    &=\deriv(K\cdot\ol{e})\cdot(\deriv\s)^{-1}\\
    &=(\deriv K\cdot (\deriv\u)^{-1}\cdot \deriv \s)\cdot (\deriv \s)^{-1}\\
    &=\deriv K\cdot (\deriv\u)^{-1}.
\end{align*}
Therefore, this is the composite of the map $\deriv_\u\colon\ImArcs{N\sm D^d}\to\Omega\S(N\sm D^d)$, defined by $\deriv_\u(K)=(\deriv K)\cdot(\deriv\u)^{-1}$, and the map $\jincl_N\colon \Omega\S(N\sm D^d)\hra\Omega\S(N)$ induced by the inclusion $N\sm D^d\subseteq N$.
% \begin{remark}\label{rem:nullhomotopic-has-section-emb}
%     We saw in Remark~\ref{rem:nullhomotopic-has-section} that for a nullhomotopic $\s$ the map $\deriv_e\colon\Imm(\S^1,N)\to\S(N)$ has a section $f$ on homotopy groups $\pi_n(-,\s)$. This means that for every (homotopy class of) $n$-sphere of unit vectors in $N$ we can choose an $n$-parameter family $f(v)$ of immersed circles with these vectors as derivatives at $e$. Since this arises from the section sending $x\in\S(N)$ to $\const_x\in\Lambda\S(N)$, we can change $f(v)$ to be represented by embedded circles, so that $\deriv_e\colon\Emb(\S^1,N)\to\S(N)$ also has a section on homotopy groups $\pi_n(-,\s)$ whenever $\bm{\s}=1\in\pi_1N$.??
% \end{remark}

From the vertical and horizontal long exact sequences of homotopy groups, we see that $-\cdot \ol{e}$ induces a weak equivalence $\hofib_{\const_*}(\jincl_N\deriv_\u)\simeq\hofib_{\deriv \s}(\deriv\incl_N)$, for the basepoint $*=\ol{e}(e)\in N$.

Moreover, {\color{black}by} Corollary~\ref{cor:imm-circles-htpy-gps} the following commutative diagram has exact rows and columns.
\begin{equation}\label{diag:main-circles-hofibs}
    \begin{tikzcd}[column sep=large]
        \pi_{d-1}(\S N)\ar{r}{a\mapsto a-\bm{\deriv\s}\,a}\ar{d} & 
        \pi_{d-1}(\S N)\ar{r}\ar{d}{\delta_{\jincl_N\deriv_\u}} & 
        \pi_{d-2}\Lambda\S(N)\ar{d}{\delta_{\deriv\incl_N}}\ar[two heads]{r}{} & \big\{b\in\pi_{d-2}N\mid b=\bm{\s}\,b\big\}\ar{d}
        \\
        0\ar{d}\ar{r} &    \pi_{d-3}\hofib_*(\jincl_N\deriv_\u)\ar{d}\ar[tail,two heads]{r}{\cong} & 
        \pi_{d-3}\hofib_{\deriv\s}(\deriv\incl_N)\ar{d} \ar{r}{} & 0\ar{d}
        \\
        \pi_{d-2}N\ar[equals]{d}\ar{r}{\delta_{\deriv_e}} &
        \pi_{d-3}(\Arcs{N\sm D^d},\u)\ar[two heads]{d}{\pi_{d-3}(\jincl_N\deriv_\u,\u)}\ar{r}{\pi_{d-3}(-\cdot \ol{e})} & \pi_{d-3}(\Emb(\S^1,N),\s)\ar[two heads]{d}{\pi_{d-3}(\deriv\incl_N,\s)} \ar{r}{\pi_{d-3}\deriv_e} & \pi_{d-3}N\ar[equals]{d}
        \\
        \pi_{d-2}N\ar[]{r}{b\mapsto b-\bm{\s}\,b} & \pi_{d-2}N\ar[]{r}{\pi_{d-3}(-\cdot \deriv\s)} & \pi_{d-3}(\Lambda\S(N),\deriv\s) \ar{r}{} & \pi_{d-3}N.
    \end{tikzcd}
\end{equation}
The desired group $\ker\pi_{d-3}(\deriv\incl_N,\s)$ is the cokernel of the connecting map $\delta_{\deriv\incl_N}$. We apply the snake (aka kernel--cokernel sequence) lemma to the top two rows with the leftmost top term omitted.
\begin{theorem}\label{thm:whisk}
There is an exact sequence of groups
\[
\begin{tikzcd}[column sep=large]
        \ker(\delta_{\deriv\incl_N})\ar{r} & \big\{b\in\pi_{d-2}N\mid b=\bm{\s}\,b\big\}
        \ar{r}{\delta^{whisk}_{\s}} &
        \ker\pi_{d-3}(\jincl_N\deriv_\u,\u)\ar[two heads]{r}{} & \ker\pi_{d-3}(\deriv\incl_N,\s).
\end{tikzcd}
\]
where $\delta^{whisk}_{\s}$ is a ``parametrized change of the whisker'': for $d=3$ it sends $b=\bm{\s}b\bm{\s}^{-1}\in\pi_1N$ to an embedded concatenation $b\cdot\bm{\s}\cdot b^{-1}$, while for $d\geq4$ it is a family version of this.

Moreover, if $\s$ is nullhomotopic then $\delta^{whisk}_{\s}$ is trivial.
\end{theorem}
\begin{proof}
    It remains to describe the connecting map $\delta^{whisk}_{\s}$ in the kernel--cokernel sequence, by definition given as a restriction of $\delta_{\deriv_e}$. This in turn has a description similar to the connecting map for $\ev_e$ given in the proof of Proposition~\ref{prop:free-loops}. Namely, for $\gamma\in\Omega\S(N)$ we let $\delta_{\deriv_e}(\gamma)\coloneqq B_0|_{\S^1\sm[e,e+\varepsilon]}$ where we lift the loop $\gamma$ to a path $B_t\in\Emb(\S^1,N)$, i.e.\ an isotopy of ``non-based'' embedded circles, such that $B_1=\s$ and for every $t\in[0,1]$ the unit derivative at $e$ is equal to $\deriv_e(B_t)=\beta_t$.
    Note that by the commutativity of \eqref{eq:combining}
    each $B_t\in\Emb(\S^1,N)$ is homotopic to the loop $\gamma|_{[t,1]}\cdot\deriv\s\cdot\gamma|_{[t,1]}^{-1}\in\Lambda\S(N)$, which was used in the definition of $\delta_{\ev_e}$.

    Thus, to obtain $\delta^{whisk}_{\s}(b)$ we represent $b\in\pi_{d-2}N$ by a map $\beta\colon\I^{d-3}\times\I\to\S(N)$, lift it to an isotopy $B\colon\I^{d-3}\times\I\to\Emb(\S^1,N)$ (i.e.\ $\deriv_e\circ B=\beta$) with $B(\vec{t},t)=\s$ for $\vec{t}\in\partial\I^{d-3}$ or $t=1$, and let  
    \[
        \delta^{whisk}_{\s}(b)\coloneqq[B(-,0)|_{\S^1\sm[e,e+\varepsilon]}\colon(\I^{d-3},\partial\I^{d-3})\to (\Arcs{N\sm D^d},\u)].
    \]
    % More explicitly, the family of embedded circles $B(\vec{t},0)$ is isotopic to the constant family $B(\vec{t},1)=\s$, but via isotopy $B(\vec{t},t)$ which is not based (it moves a neighbourhood of the point $\s(e)$ around the sphere $\beta$), so that the family of arcs $B(-,0)|_{\S^1\sm[e,e+\varepsilon]}$ is apriori not isotopic to $\const_{\u}$ (explaining why $\delta_{\deriv_e}(b)\in\ker\pi_{d-3}(-\cdot\ol{e})$).
    Then each $B(\vec{t},t)$ is homotopic to $\beta_{\vec{t},\geq t}\cdot\s\cdot\beta_{\vec{t},\geq t}^{-1}$, so the family $B(-,0)$ is an embedded version of the map $\beta_{-,\geq0}\cdot\s\cdot\beta_{-,\geq0}^{-1}\colon\S^{d-3}\to\Omega\S(N)$, and can thus be viewed as a ``parametrized whisker change''. Moreover, since we assume $b-\bm{\s}\,b=0$ it follows that $B(-,0)$ is based homotopic to $\const_\s$, so that $\delta^{whisk}_{\s}(b)$ is indeed in the kernel of $\pi_{d-3}(\jincl_N\deriv_\u,\u)$.
    
    In particular, for $d=3$ this means that we simply isotope $\s$ by dragging its part $\ol{e}=\s|_{[e,e+\varepsilon]}$ around a loop $\beta\in\Omega N$ representing $b\in\pi_1N$, until this part comes back to its initial position (we will have to slightly push away the rest of $\s$). The resulting embedded circle $B(0)$ is clearly freely isotopic to $\s$, and based homotopic to $\beta\cdot\s\cdot\beta^{-1}$ which is based homotopic to $\s$. Thus, the restriction of $B(0)$ to $\S^1\sm[e,e+\varepsilon]$ is an arc homotopic to $\u$ rel.\ endpoints, but possibly not isotopic to it. Note that for $\bm{\s}=1$ every $\delta^{whisk}_{\s}(b)$ is clearly isotopic to $\u$.
    
    In fact, for nullhomotopic $\s$ and any $d\geq3$ the map $\pi_n\deriv_e$ has a section for any $n\geq0$, implying that $\delta_{\deriv_e}=0$, so $\delta^{whisk}_{\s}$ is trivial in this case. Namely, any family $\S^n\to\S(N)$ lifts by the parametrised ambient isotopy extension theorem to a family $\I^n\to\Emb(\S^1,N)$, which is constantly $\s$ on the boundary except possibly on $\{1\}\times\I^{n-1}$. However, we can choose the ambient isotopy so that it encompasses a ball containing $\s$. Thus, the rest of $\s$ is also being dragged along (not only $\s([e,e+\e])$) in the isotopy, giving a desired lift $\S^k\to\Emb(\S^1,N)$.
\end{proof}

\subsection{Computing the kernel}\label{subsec:computing-the-kernel}
It remains to prove the following statement from Theorem~\ref{thm:circles-main}.
\begin{theorem}\label{thm:circles-main2}
    For $d\geq4$ the Dax invariant for arcs induces an isomorphism from $\ker\pi_{d-3}(\incl_N,\s)$ to the abelian group with generators $g\in\pi_1N\sm1$ and relations:
      $\Dax(\delta^{whisk}_\s(c))=0$ for all $c\in\pi_{d-2}N$ such that $c=\bm{\s}\,c$, and
      $\dax_\u(a)=0$ for all $a\in\pi_{d-1}(N\sm D^d)$.
\end{theorem}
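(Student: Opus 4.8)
The plan is to run the diagram chase already set up in \eqref{diag:main-circles-hofibs} and Corollary~\ref{cor:whisk}, and then feed in the arc-level computation from Theorem~\ref{thm:KT-arcs} (equivalently, from the discussion in Section~\ref{subsec:dax-compute}). Recall that Corollary~\ref{cor:whisk} gives the exact sequence
\[
    \big\{b\in\pi_{d-2}N\mid b=\bm{\s}\,b\big\}
        \xrightarrow{\ \delta^{whisk}_{\s}\ }
    \ker\pi_{d-3}(\jincl_N\deriv_\u,\u)
        \twoheadrightarrow
    \ker\pi_{d-3}(\deriv\incl_N,\s),
\]
so $\ker\pi_{d-3}(\incl_N,\s)$ is the quotient of $\ker\pi_{d-3}(\incl_{N\sm D^d},\u)$ by the image of $\delta^{whisk}_{\s}$. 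The first step is therefore to invoke Theorem~\ref{thm:dax} (for $d\geq4$), which tells us that $\Dax$ is an isomorphism of groups from $\ker\pi_{d-3}(\incl_{N\sm D^d},\u)$ onto $\Z[\pi_1(N\sm D^d)\sm1]/\dax_\u(\pi_{d-1}(N\sm D^d))$. Combined with the identification $\pi_1(N\sm D^d)=\pi_1N$ (which holds since $d\geq4$, so removing an open $d$-ball does not change $\pi_1$), this already accounts for the generators $g\in\pi_1N\sm1$ and for the family of relations $\dax_\u(a)=0$, $a\in\pi_{d-1}(N\sm D^d)$.

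The second step is to transport the subgroup $\im(\delta^{whisk}_{\s})$ through this isomorphism. Since $\Dax$ is an isomorphism, quotienting $\ker\pi_{d-3}(\incl_{N\sm D^d},\u)$ by $\im(\delta^{whisk}_{\s})$ corresponds exactly to quotienting the target group further by $\Dax(\im(\delta^{whisk}_{\s})) = \{\Dax(\delta^{whisk}_{\s}(c))\mid c\in\pi_{d-2}N,\ c=\bm{\s}\,c\}$. Here I need to observe that $\delta^{whisk}_{\s}(c)$ genuinely lands in $\ker\pi_{d-3}(\incl_{N\sm D^d},\u)$, which is the content of Corollary~\ref{cor:whisk} (it is based nullhomotopic as a family of immersed arcs precisely because $c=\bm{\s}\,c$), so that $\Dax$ can be applied to it; this is why the indexing set for $c$ is the fixed-point set of the $\bm{\s}$-action. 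Assembling the two steps: the quotient map $\ker\pi_{d-3}(\incl_{N\sm D^d},\u)\twoheadrightarrow\ker\pi_{d-3}(\incl_N,\s)$ becomes, under $\Dax$, the quotient of $\Z[\pi_1N\sm1]$ by the span of both relation families, which is exactly the presentation claimed.

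I expect the only real subtlety to be bookkeeping about \emph{which} map is called $\Dax$ in each context and checking it is compatible with the vertical maps of \eqref{diag:main-circles-hofibs}: the arc-level $\Dax$ is natural under the inclusion $N\sm D^d\hookrightarrow N$ in the sense needed, and the composite $\ker\pi_{d-3}(\incl_{N\sm D^d},\u)\to\ker\pi_{d-3}(\incl_N,\s)$ agrees with $\pi_{d-3}(-\cdot\ol{e})$ restricted to kernels, which is surjective by the column exactness in \eqref{diag:main-circles-hofibs}. One should also note that the abelian group structure is unambiguous: for $d\geq4$ all the groups in sight are genuine abelian groups (the relevant homotopy groups are in degree $\geq1$ of based loop-type spaces, or one is in the stable range), so $\Dax$ being a group homomorphism is already established in Theorem~\ref{thm:dax}, and no Eckmann--Hilton argument is needed beyond what was already done. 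The main obstacle is thus not any hard analysis but making sure the snake-lemma connecting map $\delta^{whisk}_{\s}$ from Corollary~\ref{cor:whisk} is the same map whose $\Dax$-image appears in the statement — which follows directly from the explicit "parametrized whisker change" description given in the proof of that corollary — after which the theorem is immediate.
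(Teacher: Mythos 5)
Your strategy matches the paper's: run Corollary~\ref{cor:whisk}, feed in Theorem~\ref{thm:dax}, and transport $\im(\delta^{whisk}_\s)$ across the isomorphism. There is, however, one genuine gap. Corollary~\ref{cor:whisk} exhibits $\ker\pi_{d-3}(\deriv\incl_N,\s)$ as the cokernel of a map whose target is $\ker\pi_{d-3}(\jincl_N\deriv_\u,\u)$, yet your next sentence replaces that group with $\ker\pi_{d-3}(\incl_{N\sm D^d},\u)$ (the group to which Theorem~\ref{thm:dax} actually applies) on the strength of a single ``so''. These are kernels of \emph{different} maps out of $\pi_{d-3}(\Arcs{N\sm D^d},\u)$: the first asks when a family of embedded arcs becomes nullhomotopic in $\Omega\S(N)$ after applying $\deriv_\u$ and then including $N\sm D^d\hra N$, the second asks the same in $\ImArcs{N\sm D^d}$, equivalently $\Omega(N\sm D^d)$. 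That these kernels coincide is precisely Lemma~\ref{lem:kernels-agree} in the paper, and it is not a formality: one needs (i) $\pi_{d-2}\S(N\sm D^d)=\pi_{d-2}(N\sm D^d)$ so that $\deriv_\u$ and $p_\u$ induce the same map on $\pi_{d-3}$, and (ii) the inclusion $N\sm D^d\hra N$ induces an isomorphism on $\pi_{d-2}$, which is Lemma~\ref{lem:hatcher} (the long exact sequence of the pair $(N,N\sm D^d)$). You flag ``naturality of $\Dax$ under $N\sm D^d\hookrightarrow N$'' as the only subtlety, but the real issue lies upstream of $\Dax$: one must first know that passing from $N\sm D^d$ to $N$ does not enlarge the kernel to which $\Dax$ is applied.

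Once Lemma~\ref{lem:kernels-agree} is supplied, the rest of your argument --- the isomorphism $\Dax$ from Theorem~\ref{thm:dax}, the identification $\pi_1(N\sm D^d)=\pi_1N$, and the further quotient by $\Dax(\im\delta^{whisk}_\s)$ --- is correct and is exactly what the paper does. You also glide silently over $\ker\pi_{d-3}(\incl_N,\s)\cong\ker\pi_{d-3}(\deriv\incl_N,\s)$, which is genuinely immediate (Proposition~\ref{prop:free-loops} gives that $\deriv$ is a weak equivalence) but should be said.
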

We will use that $\ker(\pi_{d-3}\incl_N,\s)\cong\ker(\pi_{d-3}\deriv\incl_N,\s)$ (as $\deriv$ is a homotopy equivalence by Proposition~\ref{prop:free-loops}) and Theorem~\ref{thm:whisk}: we will first compute $\ker(\pi_{d-3}\jincl_N\deriv_\u,\u)$ and then the image of $\delta^{whisk}_{\s}$. 

\begin{lemma}\label{lem:kernels-agree}
    The subgroups $\ker(\pi_{d-3}\jincl_N\deriv_\u)$ and $\ker(\pi_{d-3}p_\u)$ of $\pi_{d-3}\Arcs{N\sm D^d}$ agree.
\end{lemma}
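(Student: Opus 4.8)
The plan is to factor both maps, after restriction to $\Arcs{N\sm D^d}$, through the derivative map, and to check that the two ``error terms'' are isomorphisms on $\pi_{d-3}$. Write $X\coloneqq N\sm D^d$ and let $\incl_X\colon\Arcs{X}\hra\ImArcs{X}$ be the inclusion. First I would recall that, by Smale--Hirsch immersion theory (the arc analogue of the circle statement used in the proof of Proposition~\ref{prop:free-loops}), the derivative map
\[
    \deriv_\u\colon\ImArcs{X}\longrightarrow\Omega\S(X),\qquad \deriv_\u(K)=(\deriv K)\cdot(\deriv\u)^{-1},
\]
is a weak homotopy equivalence, so $\pi_{d-3}\deriv_\u$ is an isomorphism. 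Next I would note the two factorisations on $\Arcs{X}$,
\[
    p_\u=\Omega\mathrm{pr}\circ\deriv_\u\circ\incl_X,\qquad \jincl_N\deriv_\u=\jincl_N\circ\deriv_\u\circ\incl_X,
\]
where $\mathrm{pr}\colon\S(X)\to X$ is the bundle projection and $\jincl_N\colon\Omega\S(X)\to\Omega\S(N)$ is induced by $X\hra N$: the first holds since $\Omega\mathrm{pr}\bigl((\deriv K)\cdot(\deriv\u)^{-1}\bigr)=K\cdot\u^{-1}=p_\u(K)$, and the second is exactly the identification of the left vertical map of \eqref{eq:combining} made just before that diagram.

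Granting this, the lemma reduces to showing that $\pi_{d-3}\Omega\mathrm{pr}$ and $\pi_{d-3}\jincl_N$ are injective: then both $\ker\pi_{d-3}p_\u$ and $\ker\pi_{d-3}(\jincl_N\deriv_\u)$ equal $\ker\bigl(\pi_{d-3}\deriv_\u\circ\pi_{d-3}\incl_X\bigr)=\ker\pi_{d-3}\incl_X$, the last equality because $\pi_{d-3}\deriv_\u$ is an isomorphism. For $\mathrm{pr}$ I would argue that the fibre $\S^{d-1}$ is $(d-2)$-connected, so $\mathrm{pr}$ is $(d-1)$-connected and $\Omega\mathrm{pr}$ is $(d-2)$-connected, hence an isomorphism on $\pi_{d-3}$. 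For $\jincl_N$ I would use that, since $\D^d\subseteq N$ is an embedded disc, $N$ is obtained from $X$ up to homotopy by attaching a single $d$-cell along $\S^{d-1}=\partial\D^d$, and that the unit sphere bundle over the contractible disc is trivial; therefore $\S(N)$ is built from $\S(X)$ by attaching cells only in dimensions $\geq d$, so $\S(X)\hra\S(N)$ is $(d-1)$-connected and $\jincl_N$ is $(d-2)$-connected, again an isomorphism on $\pi_{d-3}$. (As a check, the long exact sequences of the two sphere bundles give $\pi_{d-2}\S(X)\cong\pi_{d-2}X$ and $\pi_{d-2}\S(N)\cong\pi_{d-2}N$ because $\pi_{d-3}\S^{d-1}=\pi_{d-2}\S^{d-1}=0$ for $d\geq4$, and this is compatible with $\pi_{d-2}X\cong\pi_{d-2}N$.)

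I do not expect a genuinely hard step here; this is a bookkeeping lemma. The only points to be careful about are that each looping shifts connectivity down by exactly one, so that one still lands at an \emph{isomorphism} on $\pi_{d-3}$ (rather than merely a surjection), and verifying that the map $\jincl_N\deriv_\u$ restricted to $\Arcs{X}$ really is $\jincl_N\circ\deriv_\u\circ\incl_X$ --- but that identification was already carried out in the text when setting up diagram~\eqref{eq:combining}.
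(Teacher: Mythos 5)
Your proof is correct and takes essentially the same route as the paper: both reduce the claim to the two facts that the sphere-bundle projection $\mathrm{pr}\colon\S(X)\to X$ is $(d-1)$-connected (so $\Omega\mathrm{pr}$, equivalently the comparison between $\deriv_\u$ and $p_\u$, is an isomorphism on $\pi_{d-3}$) and that the inclusion $N\sm D^d\hra N$ induces an isomorphism $\pi_{d-2}(N\sm D^d)\to\pi_{d-2}N$. The paper cites the latter as Lemma~\ref{lem:hatcher}, whereas you re-derive it via the cell-attachment/connectivity of $\S(X)\hra\S(N)$; this is a cosmetic rather than substantive difference.
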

\begin{proof}
    First recall that $\deriv_\u\colon\Arcs{N\sm D^d}\to\Omega\S(N\sm D^d)$, is given by $K\mapsto \deriv K\cdot(\deriv\u)^{-1}$, while $p_\u\colon\Arcs{N\sm D^d}\to\Omega(N\sm D^d)$ is given by $K\mapsto K\cdot\u^{-1}$. Thus, these maps agree on $\pi_{d-3}$, since $\pi_{d-2}\S(N\sm D^d)=\pi_{d-2}(N\sm D^d)$. The result then follows from $\pi_{d-3}(\jincl_N\deriv_\u)= \pi_{d-3}(\jincl_N)\circ\pi_{d-3}(\deriv_\u)$, and the fact that the map $\pi_{d-3}(\jincl_N)\colon\pi_{d-2}(N\sm D^d)\to\pi_{d-2}(N)$ is an isomorphism by the first part of the following standard result {\color{black}(see for example \cite[Lem.4.38]{Hatcher-AT})}. 
\begin{lemma}\label{lem:hatcher}
    The natural inclusion induces isomorphisms $\pi_n(N\sm D^d)\to \pi_nN$ for all $n\leq d-2$. Moreover, $\pi_d(N,N\sm D^d)\cong\Z[\pi_1N]\{\bm{\Phi}\}$, generated by the homotopy class of the attaching map $\Phi\colon( D^d,\S^{d-1})\to(N,N\sm D^d)$, so there is an exact sequence
    \[\begin{tikzcd}[column sep=large]
    \pi_d(N\sm D^d)\rar & 
    \pi_dN\rar & \Z[\pi_1N]\{\bm{\Phi}\}\rar & 
    \pi_{d-1}(N\sm D^d)\arrow[two heads]{r}{} & 
    \pi_{d-1}N.
    \end{tikzcd}\qedhere
    \]
\end{lemma}
\end{proof}
Note that Lemma~\ref{lem:kernels-agree} immediately implies Corollaries~\ref{cor:iso-of-circles-from-arcs} and~\ref{cor:iso-of-circles-and-arcs} from the introduction.

% https://math.stackexchange.com/questions/2271532/if-m-is-a-rational-homology-sphere-is-there-a-map-sn-to-m-of-non-zero-deg
% https://mathoverflow.net/questions/20712/do-surjective-degree-zero-maps-exist
As a side remark, in most cases $\Z[\pi_1N]\{\bm{\Phi}\}\to\pi_{d-1}(N\sm D^d)$ is injective, thanks to the following.
\begin{lemma}\label{lem:QHS}
    The map $\pi_d(N\sm D^d)\to\pi_dN$ is not surjective if and only if the universal cover $\wt{N}$ is a rational homology sphere. 
    Moreover, if for some $g\in\pi_1N$ the class $g\bm{\Phi}$ is trivial in $\pi_{d-1}(N\sm D^d)$, then $N$ is simply connected. 
    % In particular, $\pi_{d-1}(N\sm D^d)\cong\pi_{d-1}N$.
\end{lemma}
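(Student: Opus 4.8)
The plan is to pass to the universal cover $\wt{N}$ and reinterpret the exact sequence of Lemma~\ref{lem:hatcher} homologically. Since $d\geq3$, the inclusion $N\sm D^d\hookrightarrow N$ is a $\pi_1$-isomorphism, so the universal cover of $N\sm D^d$ is $\wt{N}\sm\Gamma$, where $\Gamma$ denotes the $\pi_1N$-orbit of (the interior of) one lifted ball. Covering space theory identifies $\pi_d(N,N\sm D^d)\cong\pi_d(\wt{N},\wt{N}\sm\Gamma)$; a transversality argument shows this pair is $(d-1)$-connected (a map of a disk of dimension $\leq d-1$ can be pushed off the centres of the removed balls, which form a discrete set), so relative Hurewicz and excision give
\[
    \pi_d(N,N\sm D^d)\;\cong\;H_d(\wt{N},\wt{N}\sm\Gamma)\;\cong\;\bigoplus_{g\in\pi_1N}H_d(\R^d,\R^d\sm0)\;\cong\;\Z[\pi_1N],
\]
which is the identification behind Lemma~\ref{lem:hatcher}. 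Under it, the map $\psi\colon\pi_dN\to\Z[\pi_1N]\{\bm{\Phi}\}$ becomes the composite $\pi_dN=\pi_d\wt{N}\xrightarrow{h}H_d\wt{N}\xrightarrow{j_*}\Z[\pi_1N]$ of the Hurewicz homomorphism with the map induced by $\wt{N}\hookrightarrow(\wt{N},\wt{N}\sm\Gamma)$. By exactness, the inclusion $\pi_d(N\sm D^d)\to\pi_dN$ fails to be surjective precisely when $\psi\neq0$, while the next map $\Z[\pi_1N]\{\bm{\Phi}\}\to\pi_{d-1}(N\sm D^d)$ is injective precisely when $\psi=0$.

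I will then prove that $\psi\neq0$ if and only if $\wt{N}$ is a rational homology sphere. If $\wt{N}$ is a rational homology sphere, then it is closed and (being simply connected) orientable, $\pi_1N$ is finite, and $H_d\wt{N}\cong\Z$ is carried by $j_*$ to $\sum_{g\in\pi_1N}\varepsilon_g g$ with $\varepsilon_g=\pm1$ the local degree over the $g$-th ball; this is a nonzero element of $\Z[\pi_1N]$, so $j_*$ is injective. Moreover $\wt{N}$ is then rationally $(d-1)$-connected, so the rational Hurewicz theorem gives $\pi_d\wt{N}\otimes\Q\cong H_d\wt{N}\otimes\Q\neq0$, hence $h\neq0$, hence $\psi=j_*h\neq0$. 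Conversely, if $\psi\neq0$ then $h\neq0$, so $H_d\wt{N}\neq0$; since $\wt{N}$ is an orientable $d$-manifold this forces it to be closed with $H_d\wt{N}\cong\Z$, and then the standard cup-product obstruction applies: if $H^i(\wt{N};\Q)\neq0$ for some $0<i<d$, Poincar\'e duality produces classes $x,y$ of complementary positive degrees with $x\cup y\neq0$, which any map $\S^d\to\wt{N}$ sends to zero, forcing every such map to have degree $0$ and hence $h=0$, a contradiction. Thus $\wt{N}$ is a rational homology sphere, proving the first statement.

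For the ``moreover'' part, suppose $g\bm{\Phi}=0$ in $\pi_{d-1}(N\sm D^d)$ for some $g\in\pi_1N$. Since $g\bm{\Phi}\neq0$ in $\Z[\pi_1N]\{\bm{\Phi}\}$, the injectivity criterion above forces $\psi\neq0$, so by the first part $\wt{N}$ is a rational homology sphere and $\pi_1N$ is finite; writing $H_d\wt{N}=\Z\{[\wt{N}]\}$ we get $\psi(a)=j_*(h(a))=n\cdot\sum_{g'\in\pi_1N}\varepsilon_{g'}g'$ for each $a\in\pi_dN$, with $n\in\Z$. By exactness $g\bm{\Phi}$ lies in the image of $\psi$, so $g=n\sum_{g'}\varepsilon_{g'}g'$ in $\Z[\pi_1N]$; comparing coefficients forces $n=\pm1$ and $\pi_1N=\{g\}$, i.e.\ $N$ is simply connected. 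The main obstacle is the first paragraph: carefully verifying that the homological model for $\pi_d(N,N\sm D^d)$ and for the map $\psi$ is exactly the one underlying Lemma~\ref{lem:hatcher} (the transversality and excision arguments, and the compatibility of all the identifications). Once this is in place, the rest is standard use of the rational Hurewicz theorem, Poincar\'e duality, and coefficient bookkeeping in $\Z[\pi_1N]$.
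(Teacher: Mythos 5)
Your proof is correct, and it organizes the argument differently from the paper's. The paper's own proof never lifts the exact sequence of Lemma~\ref{lem:hatcher} to the universal cover; instead it first identifies the image of $\pi_d(N\sm D^d)\to\pi_dN$ with the set of degree-zero classes (nonsurjective $\Rightarrow$ degree $0$; degree $0$ $\Rightarrow$ homotopic into $N\sm D^d$), then shows that a class of nonzero degree exists if and only if $\wt N$ is a rational homology sphere by pairing with cohomology via cap products on $N$ and only afterwards passing to a lift $\wt f\colon\S^d\to\wt N$. For the ``moreover'' part it applies the relative Hurewicz map for $(N,N\sm D^d)$ to read off $\deg(f)=1$ and then quotes that $|\pi_1N|$ divides $\deg(f)$. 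You instead translate everything into $H_*(\wt N)$ up front, writing the boundary-of-Lemma-\ref{lem:hatcher} map as $\psi = j_*\circ h$ with $h\colon\pi_d\wt N\to H_d\wt N$ the Hurewicz map and $j_*\colon H_d\wt N\to H_d(\wt N,\wt N\sm\Gamma)\cong\Z[\pi_1N]$ the inclusion-induced map; you then use cup products (rather than cap products) for the forward implication and injectivity of $j_*$ for the converse, and for the ``moreover'' you compute the image of $\psi$ explicitly as $\Z\cdot\sum_{g'}\varepsilon_{g'}g'$ and compare coefficients. The mathematical engine -- rational Hurewicz, Poincar\'e duality, the top-homology vanishing for noncompact/with-boundary orientable manifolds -- is identical, but your homological framing buys a sharper structural statement (the image of $\psi$ is the cyclic subgroup generated by $\sum_{g'}\varepsilon_{g'}g'$, which immediately yields both claims), at the price of having to verify that the $\Z[\pi_1N]$-module identification of $\pi_d(N,N\sm D^d)$ underlying Lemma~\ref{lem:hatcher} is indeed the transversality/excision one; that verification is standard (covering space lift, relative Hurewicz on a $(d-1)$-connected pair of simply connected spaces, excision) and you are right to flag it, but it is routine.
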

\begin{proof}
    We first claim that the image of $\pi_d(N\sm D^d)\to\pi_dN$ precisely consists of degree $0$ maps. Namely, the classes in $\pi_dN$ that come from $\pi_d(N\sm D^d)$ are represented by nonsurjective maps, which have degree $0$. Conversely, any degree $0$ map $f\colon\S^d\to N$ is homotopic to a map missing a point in $N$, so after a homotopy we can assume it misses a neighbourhood $D^d$ of the point $\s(e)$, and hence lifts to a map $\S^d\to N\sm D^d$.
    
    Next, we show that there exists a class in $\pi_dN$ of nontrivial degree if and only if the universal cover $\wt{N}$ is a rational homology sphere. Namely, for $f\colon\S^d\to N$ and $\alpha\in H^k(N;\Z)$ with $k\neq0,d$ we have $f^*\alpha=0$, so $0=f_*([\S^d]\cap f^*\alpha)=n([N]\cap\alpha)$, where $n=\deg(f)\in\Z$
        %by Poincar\'e duality for a nontrivial class in $\alpha\in H^k(N;\Q)$ there is a class $\beta\in H^{d-k}(N;\Q)$ so that $\alpha\cup\beta$ is the generator of $H^d(N;\Q)\cong\Q$. 
        % However, $n(\alpha\cup\beta)=f^*(\alpha\cup\beta)=f^*(\alpha)\cup f^*(\beta)$,
    and $[N]\cap-$ is the Poincar\'e isomorphism. Thus, $n\neq0$ implies that $N$ is a rational homology sphere, and so is $\wt{N}$, by the same argument applied to a lift $\wt{f}\colon\S^d\to\wt{N}$, which also must have nontrivial degree (equal to ${n}/{|\pi_1N|}$, so $\pi_1N$ is finite and actually trivial for even $d$, as seen using the Euler characteristic). 
    Conversely, if $\wt{N}$ is a rational homology sphere then $\deg\colon\pi_n\wt{N}\otimes\Q\to H_d(\wt{N};\Q)\cong\Q$ is an isomorphism by the rational Hurewicz theorem, so there exists $\wt{f}\colon\S^d\to\wt{N}$ whose degree $n$ is a rational generator. Then $f\colon\S^d\to\wt{N}\to N$ has degree $n|\pi_1X|\neq0$.
    
    The relative Hurewicz map takes $g\bm{\Phi}$ to $1\in\Z\cong H_d(N,N\sm D^d)$, so if $f\in\pi_dN$ maps to $g\bm{\Phi}$, then $\deg(f)=1$. But since $|\pi_1N|$ has to divide $\deg(f)$, we get $|\pi_1N|=1$, proving the last claim.
\end{proof}

Combining Theorem~\ref{thm:KT-arcs}, which computes $\ker(\pi_{d-3}p_\u,\u)\subseteq\pi_{d-3}\Arcs{N\sm D^d}$ using the Dax invariant, with our kernel-cokernel sequence and Lemma~\ref{lem:kernels-agree}, for $d\geq4$ we obtain:
\[\begin{tikzcd}[column sep=small]
    \{b\in\pi_{d-2}N\mid b=\bm{\s}\,b\}\ar{r}{\delta^{whisk}_{\s}} 
    & \ker(\pi_{d-3}p_\u,\u)\ar[tail,two heads,shift right=6pt]{d}{\cong}[swap]{\Dax}\ar[two heads]{r}{} 
    & \ker(\pi_{d-3}\deriv\incl_N,\s)\\
    &
    \faktor{\Z[\pi_1(N\sm D^d)\sm1]}{\dax_\u(\pi_{d-1}(N\sm D^d))}\ar[tail,two heads,shift right=6pt]{u}[swap]{\partial\realmap} &
\end{tikzcd}
\]
Thus, $\ker(\pi_{d-3}\deriv\incl_N,\s)$ is isomorphic to the quotient of $\ker(\pi_{d-3}p_\u,\u)$ by the image of $\delta^{whisk}_{\s}$, which is equal to the quotient of $\Z[\pi_1(N\sm D^d)\sm1]$ by the subgroup of relations
\[
    rel_\s\coloneqq \dax_\u(\pi_{d-1}(N\sm D^d))\oplus\{\Dax(\delta^{whisk}_{\s}(b))\mid b=\bm{\s}\,b\in\pi_{d-2}N\}.
\]
This finishes the proof of Theorem~\ref{thm:circles-main2}.\hfill\qedsymbol

% \begin{remark}
%     Recall from Theorem~\ref{thm:whisk} that the class $\delta^{whisk}_{\s}(b)\in\pi_{d-3}\Arcs{N\sm D^d}$ is the ``parametrized change of whisker of $\s$ by $b$'', and indeed lies in $\ker(\pi_{d-3}p_\u)$, since by the diagram \eqref{diag:main-circles-hofibs} we have
%     \[
%         p_\u(\delta^{whisk}_{\s}(b))=\jincl_N\deriv_\u\circ\delta_{\deriv_e}(b)=b-\bm{\s}\,b=0.
%     \]
%     Thus, there is a map $F\colon\I^{d-3}\times\I\to\ImArcs{N\sm D^d}$ with $F|_{\I^{d-3}\times\{0\}}$ representing $\delta^{whisk}_{\s}(b)$ and $F|_{\I^{d-3}\times\{1\}}=\const_{\u}$, so that $\Dax(\delta^{whisk}_{\s}(b))$ is the count of double point loops in $F$. 
% \end{remark}

\subsection{Circles in dimension three}\label{subsec:dim-3}

The discussion in this section so far gives us also the diagram
\begin{equation}\label{eq:proofD}
\begin{tikzcd}[column sep=small]
    \{b\in\pi_1N\mid b=\bm{\s}\,b\}\ar{r}{\delta^{whisk}_{\s}} 
    & \KK(N\sm\D^3,\bm{\u})\ar[,two heads]{d}{}[swap]{\Dax_\u}\ar[two heads]{r}{} 
    & \KK(N,\bm{\s})\\
    &
    \faktor{\Z[\pi_1(N\sm D^d)\sm1]}{\dax_\u(\pi_2(N\sm D^d))} &
\end{tikzcd}
\end{equation}
where $\KK(N\sm D^3,\bm{\u})$ and $\KK(N,\bm{\s})$ are just sets, of respectively arcs homotopic to $\u$ and circles homotopic to $\s$, and the other two terms are groups.

Since in this dimension $\bm{\s}\,b\coloneqq\bm{\s}\cdot b\cdot \bm{\s}^{-1}$, the leftmost term is the centralizer $\zeta(\bm{\s})$ of $\bm{\s}\in\pi_1N$. It acts on $\KK(N\sm\D^3,\bm{\u})$ by sending $K\colon\D^1\hra N\sm\D^3$ to the arc $\delta^{whisk}_K(b)$ which is an embedded change of $K$ by the whisker $b$. Similarly as above, $\Dax_\u(\delta^{whisk}_K(b))$ counts double point loops in a homotopy from this arc to $\u$, which exists since $b\s b^{-1}$ is homotopic to $\s$.

Moreover, recall from Corollary~\ref{cor:dim3} that we computed $\dax_\u(\pi_2(N\sm D^d))$ in terms of the set $\mathcal{S}(N\sm D^d)$ of generators of the $\Z[\pi_1N]$-module $\pi_2(N\sm D^d)$, and that in Corollary~\ref{cor:gPhi} we saw that the sphere $\partial\D^3$ gives $\ol{g}-g\ol{\bm{\s}}$. We also observe that $\lambdabar(ga,\bm{\u})=\lambdabar(ga,\bm{\s})$. 

Recall that Theorem~\ref{thm:circles-3d} asserts that there is a well-defined invariant $\Dax_\s$ from the set $\KK(N,\bm{\s})$ to the set of equivalence classes of the set-theoretic action of $b\in\zeta(\bm{\s})$ via $b\star r = b\cdot r\cdot b^{-1} + \Dax(\delta^{whisk}_{\s}(b))$ on the group
    \[\faktor{\Z[\pi_1N\sm1]}{
        \big\langle
        \ol{g}-g\ol{\bm{\s}},\; 
        \lambdabar(ga,\bm{\s})-\lambdabar(ga,g)+\ol{\lambdabar(ga,g)}\mid g\in\pi_1N,\, a\in \mathcal{S}(N\sm\D^3)\;\big\rangle
        }.
    \]
\begin{proof}[Proof of Theorem~\ref{thm:circles-3d}]
    It remains to check that two different lifts $K,K'\in\KK(N\sm\D^3,\bm{\s})$ of a knot $L\in\KK(N,\bm{\s})$ have the same Dax invariant modulo the given action. By the exactness of the sequence \eqref{eq:proofD} we have $K'=\delta^{whisk}_{K}(b)$ for some $b=\bm{\s}\,b\in\pi_1N$. A homotopy from $\delta^{whisk}_{K}(b)$ to $\u$ can be written as a homotopy $h_1$ from $\delta^{whisk}_{K}(b)$ to $\delta^{whisk}_{\u}(b)$, where we use any homotopy from $K$ to $\u$, then followed by a homotopy $h_2$ from $\delta^{whisk}_{\u}(b)$ to $\u$. Therefore, using the additivity of the Dax invariant (see {\color{black}the proof of} Lemma~\ref{lem:Dax-dim3}) we have
    \[
    \Dax_\u(K')=\Dax_\u(\delta^{whisk}_{K}(b))=\Dax(h_1)+\Dax(h_2)=b\cdot\Dax_\u(K)\cdot b^{-1}+\Dax_\u(\delta^{whisk}_{\s}(b)).
    \]
    Therefore, $\Dax_\u(K)$ and $\Dax_\u(K')$ are in the same orbit of the action of $\zeta(\bm{\s})$, as desired.
\end{proof}

\begin{remark}\label{rem:Vassiliev-circles2}
    In Theorem~\ref{thm:Vassiliev} we showed that $\Dax_\u$ is the universal type $\leq1$ Vassiliev invariant of $\KK(X,\bm{\u})$, by writing any $v\colon\KK(X,\bm{\u})\to A$ as $v(K)-v(\u)=w_v\circ\Dax(K)$. We had that $w_v$ is well defined since it vanished on the image of $\dax_\u(\pi_2X)$. 
    
    The proof that for any $v\colon\KK(N,\bm{\s})\to A$ we have $v(K)-v(\s)=w_v\circ\Dax_\s(K)$ follows the same steps. Since $w_v$ vanishes on $\dax_\u(\pi_2(N\sm\D^d))$ by the same argument as for arcs, it remains to show $w_v$ is well defined modulo the action of $\zeta(\bm{\s})$. Observe that for any $r$ we have $r=\Dax_\u(K)$ for some $K\in\KK(N\sm D^d,\bm{\u})$, and we saw that $brb^{-1}+\Dax_\u(\delta^{whisk}_\s(b)$ is then equal to $\Dax_\u(K')$, for some arc $K'$ that closes up to the same knotted circle as $K$. Since $v$ is an invariant of circles, we conclude that $w_v(r)=w_v(\Dax_\s(K))=w_v(\Dax_\s(K'))=w_v(brb^{-1}+\Dax_\u(\delta^{whisk}_\s(b)){\color{black})}$.
\end{remark}

%%%%%%%%%%%%%%%%%%%%%

\printbibliography

\vspace{1em}
\hrule

\end{document}